\numberwithin{equation}{section}
\numberwithin{table}{section}
\newtheorem{thm}{Theorem}[section]
\newtheorem{prop}[thm]{Proposition}
\newtheorem{lem}[thm]{Lemma}
\theoremstyle{definition}
\newtheorem{defn}[thm]{Definition}
\theoremstyle{remark}
\newtheorem{rem}[thm]{Remark}
\newcommand{\al}{\alpha}
\newcommand{\ga}{\gamma}
\newcommand{\de}{\delta}
\newcommand{\e}{\varepsilon}
\newcommand{\sgm}{\sigma}
\newcommand{\ze}{\zeta}
\newcommand{\p}{\partial}
\newcommand{\med}{\mathrm{med}}
\newcommand{\I}{\infty}
\newcommand{\Sc}[1]{\mathcal{#1}}
\newcommand{\F}{\Sc{F}}
\newcommand{\FR}[1]{\mathfrak{#1}}
\newcommand{\Bo}[1]{\mathbb{#1}}
\newcommand{\R}{\Bo{R}}
\newcommand{\T}{\Bo{T}}
\newcommand{\Tg}{\Bo{T}_\ga}
\newcommand{\Zg}{\Bo{Z}_\ga}
\newcommand{\lec}{\lesssim}
\newcommand{\gec}{\gtrsim}
\newcommand{\hhat}{\widehat}
\newcommand{\bbar}{\overline}
\newcommand{\ti}{\widetilde}
\newcommand{\supp}[1]{\> \operatorname{supp}\> #1}
\newcommand{\Supp}[2]{\supp{#1}\subset #2}
\newcommand{\shugo}[1]{\{ #1\}}
\newcommand{\Shugo}[2]{\big\{ \, #1 \, \big| \, #2 \, \big\}}
\newcommand{\LR}[1]{{\langle #1 \rangle }}
\newcommand{\chf}[1]{\textbf{1}_{#1}}
\newcommand{\eq}[2]{\begin{equation} \label{#1} \begin{split} #2 \end{split} \end{equation}}
\newcommand{\eqq}[1]{\begin{equation*} \begin{split} #1 \end{split} \end{equation*}}
\newcommand{\mat}[1]{\begin{smallmatrix} #1 \end{smallmatrix}}
\newcommand{\norm}[2]{\big\| #1 \big\| _{#2}}
\newcommand{\tnorm}[2]{\| #1 \| _{#2}}
\newcommand{\hx}{\hspace{10pt}}
\newcommand{\ttfrac}[2]{\text{{\footnotesize $\frac{#1}{#2}$}}}
\newcommand{\eqs}[1]{\begin{gather*} #1 \end{gather*}}
\title[$I$-method for 2d Zakharov system]{Resonant decomposition and the $I$-method for the two-dimensional Zakharov system}
\author[Nobu Kishimoto]{Nobu Kishimoto}
\address{Department of Mathematics, Kyoto University, Kyoto 606-8502, Japan}
\email{n-kishi@math.kyoto-u.ac.jp}
\begin{document}

\begin{abstract}
The initial value problem of the Zakharov system on two-dimensional torus with general period is considered in this paper.
We apply the $I$-method with some `resonant decomposition' to show global well-posedness results for small-in-$L^2$ initial data belonging to some spaces weaker than the energy class.
We also consider an application of our ideas to the initial value problem on $\R^2$ and give an improvement of the best known result by Pecher (2012).
\end{abstract}

\maketitle


\section{Introduction}
We consider the initial value problem of the Zakharov system:
\begin{equation}\label{ZH}
\left\{
\begin{array}{@{\,}r@{\;}l}
i\p _tu+\Delta u&=nu,\qquad u:[-T,T]\times Z \to \Bo{C},\\
\p _t^2n-\Delta n&=\Delta (|u|^2),\qquad n:[-T,T] \times Z\to \R ,\\
(u,n,\p _tn)\big| _{t=0}&=(u_0,n_0,n_1)\in H^s\times H^r\times |\nabla |H^r.
\end{array}
\right.
\end{equation}
Here, $Z=\R^2$ or $\Tg ^2:=\R ^2/(2\pi \ga _1\Bo{Z})\times (2\pi \ga _2\Bo{Z})$ (two-dimensional torus of general period $\ga =(\ga _1,\ga _2)\in \R _+^2$).
$|\nabla |H^r$ denotes the space of all functions $f$ such that $|\nabla |^{-1}f\in H^r$.
The Zakharov system was introduced in~\cite{Z72} for a model of the \mbox{Langmuir} turbulence in unmagnetized ionized plasma; $u$ represents the slowly varying envelope of rapidly oscillating electric field, and $n$ is the deviation of ion density from its mean value.

\eqref{ZH} is described as a Hamiltonian PDE with the Hamiltonian given by
\eqq{H(u,n)(t):=\norm{\nabla u(t)}{L^2}^2+\ttfrac{1}{2}(\norm{n(t)}{L^2}^2+\norm{|\nabla |^{-1}\p _tn(t)}{L^2}^2)+\int _Zn(t,x)|u(t,x)|^2\,dx.}
Local well-posedness in the energy space $H^1\times L^2\times |\nabla |L^2$ was obtained in \cite{BC96} for $Z=\R^2$ and in \cite{K11} for $Z=\Tg^2$.
In particular, using conservation of mass and the Hamiltonian and the sharp Gagliardo-Nirenberg inequality
\[ \norm{u}{L^4(Z)}^4\le \frac{2}{\tnorm{Q}{L^2(\R ^2)}^2}\norm{u}{L^2(Z)}^2\norm{\nabla u}{L^2(Z)}^2+C\norm{u}{L^2(\Tg^2)}^4 \]
 (the last term in the right hand side is required only in the periodic case; see \cite{W83,CM08}), we have the a priori control of the energy norm of solutions in the energy class if $\tnorm{u_0}{L^2}<\tnorm{Q}{L^2(\R^2)}$, where $Q$ is the ground state of the cubic NLS on $\R^2$.
More precisely, if $\eta :=1-\tnorm{u_0}{L^2}^2/\tnorm{Q}{L^2(\R^2)}^2>0$, then we have
\eqq{\Big| \int n(t)|u(t)|^2\Big| &\le \norm{n(t)}{L^2}\norm{u(t)}{L^4}^2\le \ttfrac{1-\eta /2}{2}\norm{n(t)}{L^2}^2+\ttfrac{1}{2(1-\eta /2)}\norm{u(t)}{L^4}^4\\
&\le \ttfrac{1-\eta /2}{2}\norm{n(t)}{L^2}^2+\ttfrac{1-\eta}{1-\eta /2}\norm{\nabla u(t)}{L^2}^2+C\norm{u(t)}{L^2}^4.}
Therefore, we have the following a priori estimate
\eqq{&\ttfrac{\eta /2}{1-\eta /2}\norm{\nabla u(t)}{L^2}^2+\ttfrac{\eta}{4}\norm{n(t)}{L^2}^2+\ttfrac{1}{2}\norm{|\nabla |^{-1}\partial _tn(t)}{L^2}^2\\
&\hx \le H(u,n)(t)+C\norm{u(t)}{L^2}^4=H(u,n)(0)+C\norm{u_0}{L^2}^4}
as long as the solution $(u(t),n(t))$ exists in the energy class.
Consequently, \eqref{ZH} is globally well-posed for initial data in the energy space with $\tnorm{u_0}{L^2}<\tnorm{Q}{L^2(\R^2)}$.
In fact, the solution also exists globally for initial data in $H^1\times L^2\times H^{-1}$ with $\tnorm{u_0}{L^2}\le \tnorm{Q}{L^2(\R^2)}$ (see \cite{GM94b} for $Z=\R^2$ and \cite{KM11} for $Z=\Tg^2$).

The present article addresses the global well-posedness of \eqref{ZH} for some initial data without finite energy.
The proof will rely on the $I$-method, which was originally introduced by Colliander, Keel, Staffilani, Takaoka, and Tao to deal with the nonlinear Schr\"odinger equations and has been applied to a wide variety of nonlinear dispersive equations.
For the details of the $I$-method, we refer to \cite{CKSTT03,Taobook,CKSTT08} and references therein.

The $I$-method for the Zakharov system was initiated by Fang, Pecher, and Zhong~\cite{FPZ} for the $\R ^2$ case, who established the global well-posedness in $H^s\times L^2\times |\nabla |L^2$ with $1>s>\frac{3}{4}$.
Their estimate of the modified energy was mainly based on the Strichartz estimate for the Schr\"odinger equation and its bilinear refinement, as well as some crude estimates with the H\"older inequality and the Sobolev embedding.
It is worth noting that they did not use the scaling argument in the $I$-method; thus it was quite important for global well-posedness under the minimal regularity assumptions to obtain the  best estimate for the lower bound of local existence time in terms of the size of initial data.

Our principal aim is to apply the $I$-method in the periodic case $Z=\Tg^2$, where the local well-posedness of \eqref{ZH} below the energy space is known for $\frac{1}{2}\le s\le 1$, $r=0$ (\cite{K11}).
However, it turns out not to be trivial at all to adjust their argument to the periodic setting. 
In fact, since the dispersive effect is limited on torus, the same estimate as for $\R^2$ cannot be expected in general.
For example, the $L^4$ Strichartz estimate for the Schr\"odinger equation on $\Tg^2$ cannot hold without some loss of derivative (see \cite{B93-1,CW10}).
To obtain the best decay in the almost conservation law, we will use the sharp trilinear estimates established in \cite{K11} which control various interactions between two Schr\"odinger solutions and a wave solution.

We remark that, in \cite{FPZ}, the trilinear terms have the biggest contribution in the increment of the modified energy and force them to assume $s>\frac{3}{4}$.
To improve further, we shall introduce a new modified energy based on the concept of `resonant decomposition' (see \cite{CKSTT08}, for instance).
The trilinear terms then become harmless; in fact, we find that these terms are acceptable for wider regularity range $s>\frac{1}{2}$.
However, some portion of the quartilinear terms in the modified energy increment still has a large contribution, which will require the regularity $s>\frac{2}{3}$ even for the case of $\R^2$ if we estimate it in the same manner as \cite{FPZ}.
To control these quartilinear terms, we make more refined analysis with the Strichartz estimate for the wave equation.
At the end, we will push down the threshold to $s>\frac{9}{14}$.
\begin{thm}\label{thm_global}
Let $1>s>\frac{9}{14}$ and $r=0$.
Then, for any spatial period $\ga$, \eqref{ZH} on $\Tg^2$ is globally well-posed for initial data with $\tnorm{u_0}{L^2(\Tg ^2)}<\tnorm{Q}{L^2(\R^2)}$.
Moreover, the global solutions satisfy
\eqq{\sup _{-T\le t\le T}\Big( \norm{u(t)}{H^s}+\norm{n(t)}{L^2}+\norm{|\nabla |^{-1}\p _tn(t)}{L^2}\Big) \le C(1+T)^{\max \shugo{\frac{1-s}{2s-1},\,\frac{4(1-s)}{14s-9}}+}}
for any $T>0$, where the constant $C>0$ depends on $s$, the implicit constant in the exponent, and the size of initial data.
\end{thm}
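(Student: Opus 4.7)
The plan is to implement the $I$-method on $\Tg^2$, starting from the local well-posedness for $s\ge \tfrac12$, $r=0$ of \cite{K11} and combining it with an almost conservation law for a carefully designed modified energy. Fix a smoothing parameter $N\gg 1$ and a Fourier multiplier $I=I_N$ (the identity on $|\xi|\le N$, decaying like $(|\xi|/N)^{s-1}$ on $|\xi|\ge 2N$) acting on $u$, so that $\norm{Iu_0}{H^1}\lec N^{1-s}\norm{u_0}{H^s}$, while leaving $n$ at $L^2$. The first-generation modified energy is $E^{(1)}_I(t):=H(Iu(t),n(t))$, and a direct Fourier-space computation shows $\frac{d}{dt}E^{(1)}_I=\Sc{T}+\Sc{Q}$, where $\Sc{T}$ is a trilinear form arising from the commutator between $I$ and the product $nu$ in the Schr\"odinger part, and $\Sc{Q}$ is a quartilinear form obtained after substituting the wave equation for $\p_t n$ into the boundary term $\int (\p_t n)|Iu|^2$.

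To tame the dominant portion of $\Sc{T}$, I would carry out the resonant decomposition. Denoting the Schr\"odinger--Schr\"odinger--wave phase by $\Phi=|\xi_1|^2-|\xi_2|^2\mp|\xi_1+\xi_2|$ on $\Tg^2$, I would split $\Sc{T}$ according to whether $|\Phi|$ is large or small relative to the dyadic maxima of $(|\xi_1|,|\xi_2|)$. Dividing the non-resonant symbol by $i\Phi$ yields a cubic correction $\Lambda$; the second-generation modified energy $E^{(2)}_I:=E^{(1)}_I+\Lambda$ is comparable to $E^{(1)}_I$ up to an $N^{-\alpha}$ loss and satisfies $\frac{d}{dt}E^{(2)}_I=\Sc{T}_{\mathrm{res}}+\Sc{Q}'$, where $\Sc{T}_{\mathrm{res}}$ is the trilinear form restricted to the (near-)resonance set and $\Sc{Q}'$ is a new quartilinear form that absorbs $\p_t\Lambda$. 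The sharp trilinear estimates of \cite{K11} controlling mixed Schr\"odinger--Schr\"odinger--wave interactions on $\Tg^2$ will then handle $\Sc{T}_{\mathrm{res}}$ for any $s>\tfrac12$.

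The crux is the estimate of $\Sc{Q}'$. Treating it only with Schr\"odinger Strichartz, as in \cite{FPZ}, would limit the argument to $s>\tfrac23$. To reach $s>\tfrac{9}{14}$, I would handle separately the factors coming from $\p_tn$ in $\p_t\Lambda$, exploiting the wave Strichartz estimate on $\Tg^2$ for those factors, and perform a dyadic case analysis of the remaining resonant or near-resonant frequency configurations. Interpolating Schr\"odinger and wave Strichartz on dyadic blocks, combined with the geometric constraint imposed by the resonance condition $|\Phi|\ll N$, should yield a bound of the form
\eqq{\Big| \int_{0}^{\delta} \Sc{Q}'\, dt \Big| \lec N^{-\be(s)}\big(E^{(2)}_I(0)+1\big)^{\kappa}}
with $\be(s)>0$ whenever $s>\tfrac{9}{14}$.

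With the almost conservation law in hand, standard $I$-method iteration concludes the argument. The local existence time $\de$ furnished by the $X^{s,b}$ theory at $s=\tfrac12$ behaves like a negative power of $N$; rescaling to force $E^{(2)}_I(0)\lec 1$ at the initial time, iterating over $T/\de$ successive local intervals, and equating the accumulated energy growth $\sim (T/\de)N^{-\be(s)}$ to the admissible budget dictated by the modified-energy norm forces $N$ to grow polynomially in $T$. The two exponents $\tfrac{1-s}{2s-1}$ and $\tfrac{4(1-s)}{14s-9}$ in the theorem correspond respectively to the local-existence scaling and to the almost conservation increment; the maximum of the two dictates the claimed bound. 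The main obstacle is the quartilinear estimate on $\Sc{Q}'$: once the resonant decomposition has disposed of the trilinear obstruction, the entire analytic burden rests on $\Sc{Q}'$, and one must combine the resonant-region geometry with precise wave-Strichartz input to push the threshold from $s>\tfrac23$ down to $s>\tfrac{9}{14}$.
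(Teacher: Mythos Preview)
Your outline matches the paper's approach: a second-generation modified energy built by a resonant decomposition of the trilinear symbol (the paper encodes this in a piecewise multiplier $\sgm_\pm$, which is equivalent to your normal-form correction $\Lambda$), the surviving resonant trilinear piece controlled by the sharp estimates of \cite{K11} for any $s>\tfrac12$, and the decisive quartilinear term handled with the $L^4$ wave Strichartz estimate on $\Tg^2$ (Lemma~\ref{lem_BEforwave}) together with the modulation constraint $\bbar{L}_{1234}\gec N_2^2$, yielding the worst increment $N^{-5/4+}\de^{1/4-}$.

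Two small corrections to your write-up. First, there is no scaling on $\Tg^2$, so you cannot ``rescale to force $E^{(2)}_I(0)\lec 1$''; the paper instead works with $E^{(2)}_I(0)\lec N^{2(1-s)}$ and carries all powers of $N$ explicitly through the iteration (this is why, as noted in the introduction, the sharp lower bound on the local existence time in terms of data size is essential). Second, your attribution of the two exponents is off: both arise from combining the local time $\de\sim N^{-2(1-s)-}$ with an almost-conservation increment, the exponent $\tfrac{1-s}{2s-1}$ coming from the trilinear increment $N^{-1+}\de^{1/2-}(N^{1-s})^3$ and $\tfrac{4(1-s)}{14s-9}$ from the quartilinear increment $N^{-5/4+}\de^{1/4-}(N^{1-s})^4$.
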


\begin{rem}
(i) The period $\ga$ has nothing to do with our results, as in the local theory \cite{K11}.

(ii) In contrast to the nonperiodic problem, we know (\cite{K11}) that the data-to-solution map for \eqref{ZH} on $\Tg^2$ cannot be smooth (nor $C^2$) for $r<0$.
That is why we restrict our attention to the case $r=0$ in the above theorem.
Compare this to Theorem~\ref{thm_global_R2} below.
\end{rem}

Of course, these approaches are also effective for the $\R^2$ case.
Recently, Pecher \cite{P12} extended the previous result \cite{FPZ} on $\R^2$ to a wider regularity range, in $H^s\times H^r\times |\nabla |H^r$ with
\[ r\le 0,\quad s<r+1,\quad s(r+\tfrac{3}{2})>1.\]
The new ingredient was the global well-posedness with regularity for the wave data below $L^2$.
Note that even local well-posedness was not known in these regularities before.
He first established the local well-posedness of \eqref{ZH} with the operator $I$, and then applied the argument in \cite{FPZ} to obtain an almost conservation law of the modified energy.
Even for the case $r=0$ he could improve the previous threshold $s>\frac{3}{4}$ to $s>\frac{2}{3}$ by refining the analysis of the worst trilinear terms in the increment of the modified energy.
However, since he used the same modified energy as \cite{FPZ}, the trilinear terms still require the regularity $s>\frac{2}{3}$.
Therefore, it is strongly expected that his result, combined with our approaches, can be improved further.
We carry out this and obtain the following result.
\begin{thm}\label{thm_global_R2}
Let $s<1$, $r\le 0$ be such that $r\ge s-1$ and $s>\frac{9+3r}{14+8r}$.
Then, \eqref{ZH} on $\R^2$ is globally well-posed for initial data with $\tnorm{u_0}{L^2(\R^2)}<\tnorm{Q}{L^2(\R^2)}$.
Moreover, the global solutions satisfy
\eqq{\sup _{-T\le t\le T}\Big( \norm{u(t)}{H^s}+\norm{n(t)}{H^r}+&\norm{|\nabla |^{-1}\p _tn(t)}{H^r}\Big) \\
&\le C(1+T)^{\max \shugo{\frac{(1-s)(1+r)}{(2+r)s-1},\, \frac{4(1-s)(1+r)}{(14+8r)s-(9+3r)}}+}}
for any $T>0$, where the constant $C>0$ depends on $s$, $r$, the implicit constant in the exponent, and the size of initial data.
\end{thm}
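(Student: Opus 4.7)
The plan is to extend the proof of Theorem~\ref{thm_global} to $\R^2$, exploiting the scaling invariance of the Zakharov system and the sharper dispersive inequalities available on the whole space. I would introduce the Fourier multipliers $I=I_N$ with symbol $m_N(\xi)$ equal to $1$ for $|\xi|\le N$ and decaying smoothly like $(|\xi|/N)^{s-1}$ for $|\xi|\gec N$, and a wave-side companion $\widetilde I=\widetilde I_N$ whose symbol absorbs the weight $\LR{\xi}^r$. Rescaling by $u_\la(t,x)=\la^2u(\la^2t,\la x)$ and $n_\la(t,x)=\la^2n(\la^2t,\la x)$ with $\la\ll 1$ chosen so that the smoothed energy norm $\norm{Iu_\la(0)}{H^1}^2+\norm{\widetilde In_\la(0)}{L^2}^2+\norm{|\nabla|^{-1}\widetilde I\p_tn_\la(0)}{L^2}^2$ is of order one, the rescaled data admits local existence on unit time scale at the energy level. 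The corresponding local theory for the $I$-smoothed system at $r<0$ is a routine adaptation of \cite{P12} using $X^{s,b}$ spaces adapted to $m_N$ and $\widetilde m_N$; the constraint $r\ge s-1$ is precisely what keeps the associated trilinear estimate valid after insertion of the $I$-weights.

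The modified energy is $E_I(u,n):=H(Iu,\widetilde In)+\Lambda _3(u,n)$, where $\Lambda _3$ is the correction furnished by a \emph{resonant decomposition} of the trilinear time derivative in $\tfrac{d}{dt}H(Iu,\widetilde In)$. Schematically, those terms carry a symbol of the form $m_N(\xi_1)^2-m_N(\xi_2)^2$ (and a wave-side analogue) multiplying two Schr\"odinger and one wave Fourier characters at frequencies with $\xi_1+\xi_2+\xi_3=0$. Splitting by the size of the resonance phase $|\xi_1|^2-|\xi_2|^2\mp |\xi_3|$, I integrate by parts in time on the non-resonant region, producing $\Lambda _3$ together with a new quartilinear remainder. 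The residual resonant trilinear part is controlled, as in Theorem~\ref{thm_global}, by the sharp trilinear estimates of \cite{K11} and is harmless whenever $s>\frac12$, which removes the $s>\frac34$ and $s>\frac23$ obstructions of \cite{FPZ,P12}.

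The crux, which fixes the threshold $s>(9+3r)/(14+8r)$, is the control of the quartilinear contribution $\tfrac{d}{dt}\Lambda _3$. A Littlewood--Paley analysis on the four frequencies identifies the worst case as one Schr\"odinger factor at the highest frequency accompanied by a wave factor at scale $\sim N$. Pairing a Bourgain--Ozawa type bilinear Strichartz estimate for the two Schr\"odinger factors with a 2D wave Strichartz estimate (both sharper on $\R^2$ than on $\Tg^2$) and tracking the losses produced by the ratios $m_N(\xi_i)/m_N(\xi_j)$ and $\widetilde m_N(\xi_i)/\widetilde m_N(\xi_j)$ yields the almost conservation law
\eqq{|E_I(u,n)(t+\de _0)-E_I(u,n)(t)|\lec N^{-\al (s,r)}}
on the unit existence interval, with $\al (s,r)>0$ exactly under the stated condition on $(s,r)$. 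The concrete value of $\al$ relative to the scaling parameter $\la$ accounts for the two competing exponents inside the maximum in the polynomial bound.

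Iterating the local theory over $\sim T\la ^{-2}N^{\al (s,r)}$ intervals to cover $[-T,T]$ in rescaled time, and invoking the a priori control of the energy provided by conservation of mass and the Hamiltonian (available because $\tnorm{u_0}{L^2(\R^2)}<\tnorm{Q}{L^2(\R^2)}$ through the Gagliardo--Nirenberg argument displayed in the introduction), one obtains the stated polynomial-in-$T$ growth for $\norm{u(t)}{H^s}+\norm{n(t)}{H^r}+\norm{|\nabla |^{-1}\p _tn(t)}{H^r}$ after undoing the scaling. The hard part is the refined quartilinear analysis of $\tfrac{d}{dt}\Lambda _3$; once the resonant decomposition has defused the trilinear terms, the remaining work runs parallel to the proof of Theorem~\ref{thm_global}, with the $r\le 0$ range absorbed by the weights in $\widetilde I_N$ and by the constraint $r\ge s-1$ inherited from \cite{P12}.
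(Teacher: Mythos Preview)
Your proposal has a genuine gap at its very first step: the Zakharov system has no scaling invariance. Under $(u,n)\mapsto (\la^2u(\la^2t,\la x),\la^2n(\la^2t,\la x))$ the Schr\"odinger equation is preserved, but in the wave equation $\p_t^2 n$ picks up $\la^6$ while $\Delta n$ picks up only $\la^4$, so the rescaled pair does not solve \eqref{ZH}. This is exactly why \cite{FPZ}, \cite{P12}, and the present paper all run the $I$-method \emph{without} rescaling (the introduction flags this explicitly). Once scaling is unavailable, you cannot normalize the smoothed data to size $O(1)$ and work on unit-length intervals; instead the local existence time $\de$ depends on $N$ through $\tnorm{Iu_0}{H^1}+\tnorm{In_{+0}}{L^2}\lec N^{1-s}$, and a sharp lower bound on $\de$ becomes essential to close the iteration.

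The paper's actual route replaces your scaling step by an improved modified local theory: interpolating the endpoint bilinear estimate \eqref{est_endpoint_R2} of \cite{BHHT09} with \eqref{est_be_s}, one obtains \eqref{est_modifiedbe_R2_s} with a factor $\de^{\frac{1+r}{2}-}$, which yields $\de\sim (\tnorm{Iu_0}{H^1}+\tnorm{In_{+0}}{L^2})^{-\frac{2}{1+r}-}$ rather than Pecher's $\de\sim\tnorm{\text{data}}{}^{-\frac{2}{1+2r}-}$. This longer $\de$ is precisely what feeds the $(1+r)$ factors in the growth exponent and pushes the threshold down to $s>\frac{9+3r}{14+8r}$. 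Your description of the modified energy and the resonant decomposition is broadly in line with the paper's $\sgm_\pm$ construction, but without the correct $\de(N)$ you cannot recover the stated exponents, and your iteration count $T\la^{-2}N^{\al}$ is meaningless here. You should redo the argument tracking powers of $N$ and $\de$ directly, as in Section~5.3.
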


\begin{rem}
(i) If we consider the particular case $r=0$, then the above result shows the global well-posedness for $1>s>\frac{9}{14}$ just as the periodic case.

(ii) See Figure~1 for the range of regularity in the theorem.
The previous result of Pecher~\cite{P12} is indicated by \begin{picture}(30,1)(0,3) 
\unitlength 0.1in
\begin{picture}(  4.0000,  2.0000)(  0.2000, -2.2000)
%
{\color[named]{Black}{%
\special{pn 8}%
\special{pa 20 220}%
\special{pa 420 220}%
\special{pa 420 20}%
\special{pa 20 20}%
\special{pa 20 220}%
\special{pa 420 220}%
\special{fp}%
}}%
%
{\color[named]{Black}{%
\special{pn 4}%
\special{pa 340 220}%
\special{pa 140 20}%
\special{fp}%
\special{pa 220 220}%
\special{pa 30 30}%
\special{fp}%
\special{pa 100 220}%
\special{pa 20 140}%
\special{fp}%
\special{pa 420 180}%
\special{pa 260 20}%
\special{fp}%
\special{pa 420 60}%
\special{pa 380 20}%
\special{fp}%
}}%
\end{picture}%
\end{picture}, and the optimal corner is $A=(\frac{1}{4}(\sqrt{17}-1),\frac{1}{4}(\sqrt{17}-5))\approx (0.781,-0.219)$.
We extend it to the range \begin{picture}(30,1)(0,3) 
\unitlength 0.1in
\begin{picture}(  4.0000,  2.0000)(  0.2000, -2.2000)
%
{\color[named]{Black}{%
\special{pn 8}%
\special{pa 20 220}%
\special{pa 420 220}%
\special{pa 420 20}%
\special{pa 20 20}%
\special{pa 20 220}%
\special{pa 420 220}%
\special{fp}%
}}%
%
{\color[named]{Black}{%
\special{pn 4}%
\special{pa 340 220}%
\special{pa 140 20}%
\special{fp}%
\special{pa 280 220}%
\special{pa 80 20}%
\special{fp}%
\special{pa 220 220}%
\special{pa 30 30}%
\special{fp}%
\special{pa 160 220}%
\special{pa 20 80}%
\special{fp}%
\special{pa 100 220}%
\special{pa 20 140}%
\special{fp}%
\special{pa 400 220}%
\special{pa 200 20}%
\special{fp}%
\special{pa 420 180}%
\special{pa 260 20}%
\special{fp}%
\special{pa 420 120}%
\special{pa 320 20}%
\special{fp}%
\special{pa 420 60}%
\special{pa 380 20}%
\special{fp}%
}}%
\end{picture}%
\end{picture}, and the optimal corner is $B=(\frac{1}{16}(\sqrt{201}-3),\frac{1}{16}(\sqrt{201}-19))\approx (0.699,-0.301)$.
\end{rem}
\begin{figure}
\input{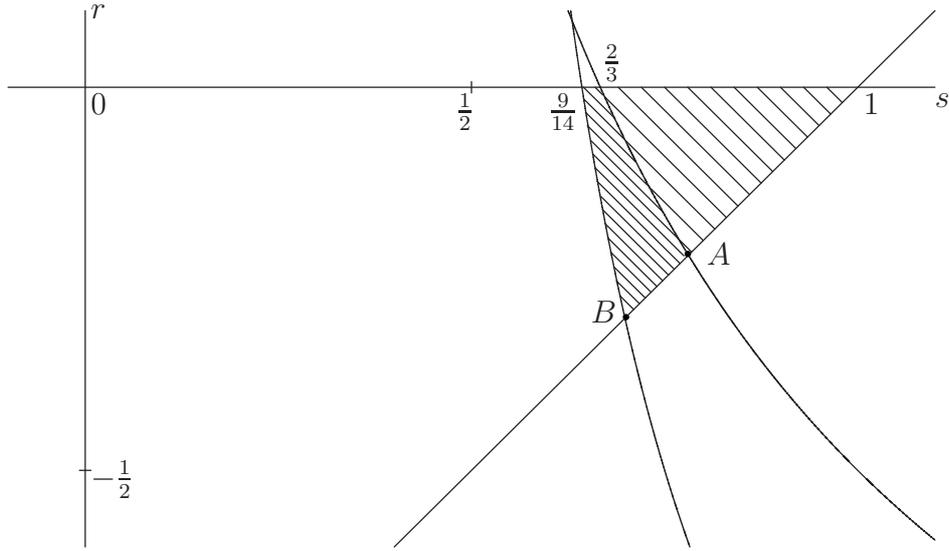}
\caption{Range of regularity for global well-posedness in the nonperiodic case.}
\end{figure}

The plan of this article is as follows.
In Section~2, we recall some definitions and estimates given in the previous results.
In Section~3, we construct our modified energy.
A proof of the almost conservation law for the periodic case and Theorem~\ref{thm_global} will be given in Section~4.
We indicate in Section~5 how to apply our ideas to the nonperiodic case, obtaining Theorem~\ref{thm_global_R2}.
In Appendix we give an elementary proof of the Strichartz estimate for the periodic wave equation, which is used in Section~4.


\bigskip
\section{Function spaces and preliminary lemmas}

We will use the same notations as used in \cite{K11}.
\begin{defn}[Littlewood-Paley decomposition]\label{def_LPdec}
Let $\eta \in C^\I (\R )$ be an even function with the properties
\eqq{\eta \equiv 1\hx \text{on}\hx [-1,1],\quad \Supp{\eta}{(-2,2)},\quad 0\le \eta \le 1.}
Define a partition of unity on $\R$, $\eta _N$ for dyadic $N\ge 1$, by
\eqq{\eta _1:=\eta ,\qquad \eta _N(r):=\eta (\tfrac{r}{N})-\eta (\tfrac{2r}{N}),\quad N\ge 2.}
Define the frequency localization operator $P_N$ on functions $f:Z\to \Bo{C}$ by
\eqq{\F _x(P_N\phi )(\xi ):=\eta _N(|\xi |)\hhat{\phi}(\xi ).}
We also use the notation $P_N$ to denote the operator on functions in $(t,x)$,
\eqq{\F _{x}(P_Nu)(t,\xi ):=\eta _N(|\xi |)\hhat{u}(t,\xi ).}
Also, define the operators $Q_L^S$, $Q_L^{W_\pm}$ on spacetime functions by
\eqs{\F _{t,x}(Q_L^Su)(\tau ,\xi ):=\eta _L(\tau +|\xi |^2)\ti{u}(\tau ,\xi ),\quad \F _{t,x}(Q_L^{W_\pm}w)(\tau ,\xi ):=\eta _L(\tau \pm |\xi |)\ti{w}(\tau ,\xi )}
for dyadic numbers $L\ge 1$.
We will write $P^S_{N,L}=P_NQ^S_L$, $P^{W_{\pm}}_{N,L}=P_NQ^{W_{\pm}}_L$ for brevity.
Finally, we define several dyadic frequency regions:
\eqs{\FR{P}_1:=\Shugo{(\tau ,\xi )}{|\xi |\lec 2},\quad \FR{P}_N:=\Shugo{(\tau ,\xi )}{\tfrac{N}{2}\le |\xi |\le 2N},\quad N\ge 2,\\
\FR{S}_1:=\Shugo{(\tau ,\xi )}{|\tau +|\xi |^2|\lec 2},\quad \FR{S}_L:=\Shugo{(\tau ,\xi )}{\tfrac{L}{2}\le |\tau +|\xi |^2|\le 2L},\quad L\ge 2,\\
\FR{W}^\pm _1:=\Shugo{(\tau ,\xi )}{|\tau \pm |\xi ||\lec 2},\quad \FR{W}^\pm _L:=\Shugo{(\tau ,\xi )}{\tfrac{L}{2}\le |\tau \pm |\xi ||\le 2L},\quad L\ge 2.}
In what follows, capital letters $N$ and $L$ are always used to denote dyadic numbers $\ge 1$.
We will often use these capital letters with various subscripts and the notation
\eqq{\overline{N}_{ij\dots}:=\max \shugo{N_i,N_j,\dots},\qquad \underline{N}_{ij\dots}:=\min \shugo{N_i,N_j,\dots}.}
The following will be used for the specific indices;
\eqq{N_{\max}:=\overline{N}_{012},\quad N_{\min}:=\underline{N}_{012},\qquad L_{\max}:=\overline{L}_{012},\quad L_{\min}:=\underline{L}_{012}.}
\end{defn}

\begin{defn}[Function spaces $X^{s,b,p}$]
For $s,b\in \R$ and $1\le p<\I$, define the spaces $X^{s,b,p}_S$ and $X^{s,b,p}_{W_{\pm}}$ by the completion of Schwartz functions on $\R \times Z$, $Z=\R^2$ or $\Tg^2$, with respect to the following norm
\eqq{\norm{u}{X^{s,b,p}_S}&:=\norm{\norm{N^sL^b\norm{P_{N,L}^Su}{L^2_{t,x}(\R \times Z)}}{\ell ^p_L}}{\ell ^2_N},\\
\norm{u}{X^{s,b,p}_{W_{\pm}}}&:=\norm{\norm{N^sL^b\norm{P_{N,L}^{W_{\pm}}u}{L^2_{t,x}(\R \times Z)}}{\ell ^p_L}}{\ell ^2_N}.}
For $T>0$, define the restricted space $X^{s,b,p}_*(T)$ ($*=S$ or $W_{\pm}$) by the restrictions of distributions in $X^{s,b,p}_*$ to $(-T,T)\times Z$, with the norm
\eqq{\norm{u}{X^{s,b,p}_*(T)}:=\inf \Shugo{\norm{U}{X^{s,b,p}_*}}{U\in X^{s,b,p}_*~\text{is an extension of $u$ to $\R \times Z$}}.}
\end{defn}

Define the Duhamel operators
\eqs{\Sc{I}_SF(t):=-i\int _0^te^{i(t-t')\Delta}F(t')\,dt',\qquad \Sc{I}_{W_{\pm}}G(t):=i\int _0^te^{\mp i(t-t')|\nabla |}G(t')\,dt'.}
We will use a bump function $\psi _\de(t):=\psi (t/\de )$, where $\psi \in C^\I_0(\R)$ is a function with the same property as $\eta$ given in Definition~\ref{def_LPdec}.

\begin{lem}[\cite{K11}, Lemma~4.1]\label{lem_linear}
Let $s\in \R$.
For any $0<\de \le 1$ and $0<b\le \frac{1}{2}$, the following estimates hold.
The implicit constants do not depend on $s$, $\de$.
\begin{gather}
\norm{\psi _\de e^{it\Delta}u_0}{X_S^{s,\frac{1}{2},1}}\lec \norm{u_0}{H^s},\qquad \norm{\psi _\de e^{-it|\nabla |}w_0}{X_{W_+}^{s,\frac{1}{2},1}}\lec \norm{w_0}{H^s},\label{est_lin_sol}\\
\norm{\psi _\de u}{X^{s,b,1}_S}\lec \de ^{\frac{1}{2}-b}\norm{u}{X^{s,\frac{1}{2},1}_S},\qquad \norm{\psi _\de w}{X^{s,b,1}_{W_{\pm}}}\lec \de ^{\frac{1}{2}-b}\norm{w}{X^{s,\frac{1}{2},1}_{W_{\pm}}},\label{est_stability}\\
\norm{\psi _\de \Sc{I}_SF}{X^{s,\frac{1}{2},1}_S}\lec \de ^{\frac{1}{2}-b}\norm{F}{X^{s,-b,1}_S},\qquad \norm{\psi _\de \Sc{I}_{W_{\pm}}G}{X^{s,\frac{1}{2},1}_{W_{\pm}}}\lec \de ^{\frac{1}{2}-b}\norm{G}{X^{s,-b,1}_{W_+}}.\label{est_lin_duh}
\end{gather}
\end{lem}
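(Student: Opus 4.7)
Since the Schr\"odinger and wave estimates are proved by identical arguments upon replacing $\tau+|\xi|^2$ by $\tau\pm|\xi|$, I consider only the Schr\"odinger case. The unifying ingredient is the scaling identity $\hhat{\psi_\de}(\tau)=\de\hhat{\psi}(\de\tau)$ combined with the Schwartz decay of $\hhat{\psi}$, which yield the master bound
\eqq{
L^{1/2}\norm{\eta_L(\mu)\hhat{\psi_\de}(\mu)}{L^2_\mu}\lec \de L(1+\de L)^{-M}\qquad (M\ge 0\text{ arbitrary}),
}
whose $\ell^1_L$-sum is $O(1)$ uniformly in $\de\in (0,1]$. Estimate \eqref{est_lin_sol} is then immediate from $\F_{t,x}(\psi_\de e^{it\De}u_0)(\tau,\xi)=\hhat{\psi_\de}(\tau+|\xi|^2)\hhat{u_0}(\xi)$, which gives $\norm{P^S_{N,L}(\psi_\de e^{it\De}u_0)}{L^2_{t,x}}=\norm{\eta_L\hhat{\psi_\de}}{L^2_\tau}\norm{P_Nu_0}{L^2_x}$ and closes the $\ell^1_L$ sum after multiplication by $N^s$ and taking $\ell^2_N$.

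For \eqref{est_stability} I would decompose $u=\sum_{L'}P^S_{L'}u$; multiplication by $\psi_\de$ corresponds on the Fourier side to convolution by $\hhat{\psi_\de}$ in the modulation variable $\mu=\tau+|\xi|^2$. When $L\lec L'$ and $L'\lec \de^{-1}$, H\"older together with $\norm{\hhat{\psi_\de}}{L^2_\mu}\sim\de^{1/2}$ gives
\eqq{
\norm{P^S_{N,L}(\psi_\de P^S_{N,L'}u)}{L^2_{t,x}}\lec L^{1/2}\de^{1/2}\norm{P^S_{N,L'}u}{L^2_{t,x}};
}
otherwise $\max(L,L')\gec \de^{-1}$ and the Schwartz decay of $\hhat{\psi_\de}$ evaluated at separation scale $\max(L,L')$ yields rapid decay in $\de\max(L,L')$. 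Multiplying by $L^b$ and summing dyadically in $L$ produces the factor $\de^{1/2-b}(L')^{1/2}$ exactly when $0<b\le \tfrac12$.

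The main difficulty is \eqref{est_lin_duh}. I would start from the Fourier representation
\eqq{
\F_x(\Sc{I}_SF)(t,\xi)=\frac{i}{2\pi}\int \ti F(\tau,\xi)\,\frac{e^{it\tau}-e^{-it|\xi|^2}}{\tau+|\xi|^2}\,d\tau
}
and split $F=F^\mathrm{hi}+F^\mathrm{lo}$ according to whether $|\tau+|\xi|^2|\gec \de^{-1}$ or $\lec \de^{-1}$. For $F^\mathrm{hi}$ the representation decouples into
\eqq{
\psi_\de \Sc{I}_SF^\mathrm{hi}=\psi_\de\,\F^{-1}_{\tau,\xi}\Big[\tfrac{i\ti F^\mathrm{hi}}{\tau+|\xi|^2}\Big]-\psi_\de e^{it\De}g^\mathrm{hi},\hx \hhat{g^\mathrm{hi}}(\xi)=\tfrac{i}{2\pi}\int\tfrac{\ti F^\mathrm{hi}(\tau,\xi)}{\tau+|\xi|^2}\,d\tau,
}
and the two terms are bounded respectively by \eqref{est_stability} with $b=\tfrac12$ and by \eqref{est_lin_sol}, the factor $|\tau+|\xi|^2|^{-1}$ furnishing $L^{-1}$ on each modulation shell $L\gec \de^{-1}$, which supplies precisely the loss $\de^{1/2-b}$ after weighting by $L^{-b}$ and summing. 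For $F^\mathrm{lo}$ the singularity at $\tau'=-|\xi|^2$ is regularized via the divided-difference identity
\eqq{
\frac{\hhat{\psi_\de}(\tau-\tau')-\hhat{\psi_\de}(\tau+|\xi|^2)}{-i(\tau'+|\xi|^2)}=\frac{1}{i}\int_0^1(\hhat{\psi_\de})'\big(\tau+|\xi|^2-s(\tau'+|\xi|^2)\big)\,ds;
}
using $(\hhat{\psi_\de})'(\mu)=\de^2\hhat{\psi}'(\de\mu)$ and $|\tau'+|\xi|^2|\lec \de^{-1}$, this kernel is bounded pointwise by $\de^2$ times a Schwartz function of $\de(\tau+|\xi|^2)$. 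Cauchy--Schwarz in $\tau'$ on the set of measure $\de^{-1}$ then produces $\norm{P^S_{N,L}(\psi_\de \Sc{I}_SF^\mathrm{lo})}{L^2_{t,x}}\lec \de^{3/2}L^{1/2}(1+\de L)^{-M}\norm{P_NF^\mathrm{lo}}{L^2_{t,x}}$, whose dyadic sum against $L^{1/2}$ equals $\de^{1/2}\norm{P_NF^\mathrm{lo}}{L^2_{t,x}}$, and the triangle-inequality bound $\norm{P_NF^\mathrm{lo}}{L^2}\le \de^{-b}\sum_{L'}(L')^{-b}\norm{P^S_{N,L'}F}{L^2}$ completes the argument.

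The principal obstacle is executing this last bookkeeping with the $\ell^1_L$ structure, which forces explicit modulation-by-modulation estimates rather than the Parseval-type shortcuts available in the standard $\ell^2_L$ setting; beyond the Fourier representations above and the Schwartz decay of $\hhat{\psi_\de}$, no additional tools are needed.
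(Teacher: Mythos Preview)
The paper does not prove this lemma; it is quoted from \cite{K11}. Your argument is the standard one and is essentially correct, so there is nothing to compare against. Two small points in your treatment of \eqref{est_stability} deserve attention, though neither is fatal.

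First, your case split is not exhaustive: the complement of ``$L\lec L'$ and $L'\lec \de^{-1}$'' is not ``$\max(L,L')\gec \de^{-1}$''. The regime $L'\ll L\lec \de^{-1}$ is left out. It is handled by the same Cauchy--Schwarz argument as your first case, using now that $|\mu-\mu'|\sim L$ forces $|\hhat{\psi_\de}(\mu-\mu')|\lec \de$, together with the $L'$-sized support in $\mu'$; summing $\sum_{L'\ll L\lec \de^{-1}}L^{b+\frac12}\de(L')^{\frac12}\sim \de^{\frac12-b}(L')^{\frac12}$ gives what you need.

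Second, in the regime $L\sim L'\gec \de^{-1}$ the separation $|\mu-\mu'|$ is \emph{not} comparable to $\max(L,L')$, so you cannot invoke pointwise Schwartz decay of $\hhat{\psi_\de}$ there. Replace that step by Young's inequality $\tnorm{\hhat{\psi_\de}*(\eta_{L'}g)}{L^2}\le \tnorm{\hhat{\psi_\de}}{L^1}\tnorm{\eta_{L'}g}{L^2}$, which yields $L^b\tnorm{P^S_{N,L}(\psi_\de P^S_{N,L'}u)}{L^2}\lec (L')^b\tnorm{P^S_{N,L'}u}{L^2}$; since $(L'\de)^{b-\frac12}\lec 1$ for $b\le \tfrac12$ and $L'\de\gec 1$, this closes. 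With these two adjustments your proof of \eqref{est_stability} is complete, and your arguments for \eqref{est_lin_sol} and \eqref{est_lin_duh} are fine as written.
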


Here and in the sequel we write $\ze =(\tau ,\xi )$.
When $Z=\Tg^2$ we use $k$ instead of $\xi$ as the discrete Fourier variable with respect to $x$ and
\eqq{\int _\ze f(\ze )=\int _{\tau \in \R}\frac{1}{\ga _1\ga _2}\sum _{k\in \Zg ^2}f(\tau ,k),\qquad \Bo{Z}_\ga ^2:=\ga _1^{-1}\Bo{Z}\times \ga _2^{-1}\Bo{Z}.}

Lemmas~\ref{lem_bs}--\ref{lem_BEforwave} below are stated for spatially periodic functions (or the Fourier transform of them) but equally hold for functions on the whole space (in this case, however, some of them are rougher than known estimates). 

\begin{lem}[\cite{K11}, Lemma~2.5 with Remark~2.8]\label{lem_bs}
Let $N_j, L_j\ge 1$ ($j=0,1,2$) be dyadic numbers.

(i) Suppose that $u_1,u_2\in L^2(\R \times \Tg ^2)$ satisfy
\eqq{\Supp{\ti{u_1}}{\FR{P}_{N_1}\cap \FR{S}_{L_1}},\qquad \Supp{\ti{u_2}}{\FR{P}_{N_2}\cap \FR{S}_{L_2}}.}
We also assume $N_0\ge 2$.
Then we have
\eqq{\norm{P_{N_0}(u_1\bbar{u_2})}{L^2_{t,x}}\lec \underline{L}_{12}^{\frac{1}{2}}\Big( \frac{\overline{L}_{12}}{N_0}+1\Big) ^{\frac{1}{2}}N_{\min}^{\frac{1}{2}}\norm{u_1}{L^2_{t,x}}\norm{u_2}{L^2_{t,x}}.}

(ii) Suppose that $u,w\in L^2(\R \times \Tg ^2)$ satisfy
\eqq{\Supp{\ti{w}}{\FR{P}_{N_0}\cap \FR{W}^{\pm}_{L_0}},\qquad \Supp{\ti{u}}{\FR{P}_{N_1}\cap \FR{S}_{L_1}}.}
Then we have
\eqq{\norm{wu}{L^2_{t,x}}+\norm{\bar{w}u}{L^2_{t,x}}\lec \underline{L}_{01}^{\frac{1}{2}}\Big( \frac{\overline{L}_{01}}{N_1}+1\Big) ^{\frac{1}{2}}\underline{N}_{01}^{\frac{1}{2}}\norm{w}{L^2_{t,x}}\norm{u}{L^2_{t,x}}.}
\end{lem}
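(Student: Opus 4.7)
The plan for both parts is the same: by Plancherel and Cauchy--Schwarz applied in the convolution variable, the $L^2$ bilinear bound is reduced to a uniform count of how many lattice points can simultaneously satisfy all the spacetime support constraints.

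For (i), I would start from
\eqq{\norm{P_{N_0}(u_1\bbar{u_2})}{L^2_{t,x}}^2 = \int _{\tau _0}\sum _{k_0}\chf{|k_0|\sim N_0}\Big| \int _{\tau _1}\sum _{k_1}\ti u_1(\tau _1,k_1)\ti{\bbar{u_2}}(\tau _0-\tau _1,k_0-k_1)\,d\tau _1\Big| ^2 d\tau _0}
and apply Cauchy--Schwarz to the inner integral in $(\tau _1,k_1)$; the orthogonality of Fourier transforms then bounds the left-hand side by $\bigl( \sup _{(\tau _0,k_0)}|E(\tau _0,k_0)|\bigr) \norm{u_1}{L^2_{t,x}}^2\norm{u_2}{L^2_{t,x}}^2$, where $E(\tau _0,k_0)$ collects all $(\tau _1,k_1)$ compatible with both Fourier supports. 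The $\tau _1$-measure of $E$ is at most $\underline L_{12}$ (the shorter of the two paraboloid-strip widths), while compatibility of the two paraboloid conditions forces $||k_0|^2-2k_0\cdot k_1-\tau _0|\lec \overline L_{12}$, so that $k_0\cdot k_1$ is constrained to an interval of length $O(\overline L_{12})$. Since $|k_0|\sim N_0\ge 2$, this localizes $k_1$ to a strip of thickness $O(\overline L_{12}/N_0)$ perpendicular to $k_0$.

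For (ii), I would run the same scheme with the wave cone replacing one paraboloid. The $\tau _1$-integration costs $\underline L_{01}$, and the remaining resonance $\bigl|\,|k_1|^2\mp |k_0-k_1|+\tau _0\,\bigr|\lec \overline L_{01}$ localizes $k_1$ to a strip whose thickness is controlled by $|\nabla _{k_1}(|k_1|^2\mp |k_0-k_1|)|$ on its support. Since $|k_1|\sim N_1$ and the wave term contributes only a unit-size derivative, the leading gradient is $\sim N_1$, giving a strip of thickness $O(\overline L_{01}/N_1)$. The estimate for $\bbar w u$ is identical up to the choice of cone.

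The main obstacle is the lattice-point count in these strips once the two annulus constraints are imposed. Using only one annulus would yield $\overline N_{12}$ in place of $N_{\min}$ (resp.\ $\overline N_{01}$ in place of $\underline N_{01}$), which is too lossy in the regime $N_0\ll \min(N_1,N_2)$ (resp.\ $N_0\ll N_1$). The $N_{\min}^{1/2}$ gain is extracted by using both annulus conditions simultaneously: $k_1\in \{|\,\cdot \,|\sim N_1\}\cap (k_0+\{|\,\cdot \,|\sim N_2\})$ lies in a lens-shaped region whose extent perpendicular to $k_0$ is $O(N_{\min})$; intersecting this lens with the strip of thickness $O(\overline L/N)$, and adding a $+1$ to absorb the lattice granularity when the strip is thinner than the mesh, yields $|E|\lec \underline L\cdot N_{\min}(\overline L/N+1)$. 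Taking square roots then delivers both claimed bounds.
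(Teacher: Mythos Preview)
The paper does not prove this lemma here; it is quoted from \cite{K11}. Your reduction via Plancherel and Cauchy--Schwarz to a uniform bound on $|E(\tau_0,k_0)|$ is indeed the standard route, and your identification of the strip $\big|2k_0\cdot k_1-(|k_0|^2+\tau_0)\big|\lec \overline{L}_{12}$ of thickness $O(\overline{L}_{12}/N_0)$ transverse to $k_0$ is correct. Part~(ii) goes through as you describe, since the integration variable is $k_0$ (or $k_1$) and one of the two annulus constraints always confines it to a ball of radius $\underline{N}_{01}$.

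There is, however, a genuine gap in part~(i) in the regime $N_0=N_{\min}\ll N_1\sim N_2$. Your ``lens'' claim fails there: the annuli $\{|k_1|\sim N_1\}$ and $\{|k_1-k_0|\sim N_2\}$ have centers only $|k_0|\sim N_0$ apart and widths $\sim N_1$, so they essentially coincide and their intersection has extent $\sim N_1$ perpendicular to $k_0$, not $O(N_0)$. Your count then overshoots by a factor $N_1/N_0$. The usual fix is an almost-orthogonality step \emph{before} Cauchy--Schwarz: decompose $\{|k_1|\sim N_1\}$ into cubes $Q$ of sidelength $\sim N_0$ and write $u_1=\sum_Q u_1^Q$, $u_2=\sum_{Q'}u_2^{Q'}$. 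Since $|k_1-k_2|=|k_0|\sim N_0$, each $Q$ interacts with only $O(1)$ cubes $Q'$, and the products $P_{N_0}(u_1^Q\overline{u_2^{Q'}})$ are almost orthogonal in $L^2$. For a single pair $(Q,Q')$ your strip argument now intersects a cube of side $N_0$, giving the correct count $\lec N_0(\overline{L}_{12}/N_0+1)$; summing in $Q$ via Cauchy--Schwarz recovers $\tnorm{u_1}{L^2}\tnorm{u_2}{L^2}$.
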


\begin{lem}[\cite{K11}, Proposition~3.1]\label{prop_te_highlow}
Let $N_j,L_j\ge 1$ be dyadic numbers and $f,g_1,g_2\in L^2_{\ze}(\R \times \Zg ^2)$ be real-valued nonnegative functions with the support properties
\eqq{\Supp{f}{\FR{P}_{N_0}\cap \FR{W}_{L_0}^\pm},\quad \Supp{g_j}{\FR{P}_{N_j}\cap \FR{S}_{L_j}},\quad j=1,2.}
Assume $L_{\max}\gec N_{\max}^2$.
Then, we have
\eqq{\iint _{\ze _0=\ze _1-\ze _2}f(\ze _0)g_1(\ze _1)g_2(\ze _2)\lec L_{\max}^{\frac{1}{2}}L_{\med}^{\frac{1}{4}}L_{\min}^{\frac{1}{4}}N_{\min}N_{\max}^{-1}\norm{f}{L^2}\norm{g_1}{L^2}\norm{g_2}{L^2}.}
\end{lem}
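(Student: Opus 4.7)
The plan is to reduce the trilinear integral to a bilinear $L^2$-type estimate via Cauchy--Schwarz, and then to bound the measure of the resulting constraint set by exploiting the transversality between the Schr\"odinger and wave characteristic surfaces in $\xi$-space.

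Since the constraint $\ze _0=\ze _1-\ze _2$ exhibits the left-hand side as a (partial) convolution, applying Cauchy--Schwarz in the pair $(g_1,g_2)$ yields
\eqq{\iint _{\ze _0=\ze _1-\ze _2}f(\ze _0)g_1(\ze _1)g_2(\ze _2) \lec V_{12}^{1/2}\norm{f}{L^2}\norm{g_1}{L^2}\norm{g_2}{L^2},}
where $V_{12}:=\sup _{\ze _0}\bigl|\shugo{\ze _1\in \FR{P}_{N_1}\cap \FR{S}_{L_1},\ \ze _1-\ze _0\in \FR{P}_{N_2}\cap \FR{S}_{L_2}}\bigr|$ is the measure of the support-intersection, and the two other pairings (in which $f$ is paired with $g_1$ or with $g_2$) give analogous bounds in terms of $V_{01}$ or $V_{02}$.

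To estimate $V_{12}$, I would integrate out $\tau _1$ first: the two modulation conditions localize $\tau _1$ to an interval of length $\underline{L}_{12}$, provided the overlap condition
\eqq{\bigl|\tau _0+2\xi _0\cdot \xi _1-|\xi _0|^2\bigr| \lec \overline{L}_{12}}
holds; this confines $\xi _1$ to a strip transverse to $\xi _0$ of width $\lec \overline{L}_{12}/N_0$. Intersected with $|\xi _1|\sim N_1$ and $|\xi _1-\xi _0|\sim N_2$, the spatial area is at most $\min \shugo{\min(N_1,N_2),\,\overline{L}_{12}/N_0}\cdot \min(N_1,N_2)$, recovering essentially Lemma~\ref{lem_bs}(i). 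For the pairings $(f,g_j)$ ($j=1,2$), the gradient of the corresponding resonance function with respect to the integrated variable $\xi _0$ has magnitude $\sim N_j$, so the strip width becomes $\overline{L}_{0j}/N_j$ and one obtains Lemma~\ref{lem_bs}(ii) with $\underline{N}_{0j}$ in place of $\min(N_1,N_2)$.

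The remaining step is a case analysis on the orderings of $L_0,L_1,L_2$ and of $N_0,N_1,N_2$. In each configuration I choose the pairing so that $L_{\min}$ enters through $\underline{L}$ while the remaining modulations appear only through the overlap condition; where necessary two such estimates are combined via a geometric mean to produce the symmetric factor $L_{\max}^{1/2}L_{\med}^{1/4}L_{\min}^{1/4}$. The hypothesis $L_{\max}\gec N_{\max}^2$ guarantees that in the worst subcases the overlap strip is wider than the narrower annulus, so the $\min$ in the area estimate reduces to the annulus width and yields an $N_{\min}^2$ factor; this combines with $L_{\max}^{1/2}/N_{\max}\gec 1$ to produce the sharp $N_{\min}/N_{\max}$. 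The main obstacle is precisely the high-low case $N_{\min}\ll N_{\max}$, where the direct bilinear estimates of Lemma~\ref{lem_bs} only give $\underline{N}^{1/2}$ instead of $N_{\min}/N_{\max}$: extracting the stronger gain demands the transversality analysis above together with a careful choice of pairing depending on which of $L_0$, $L_1$, $L_2$ is maximal.
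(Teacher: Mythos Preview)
The paper does not give a proof of this lemma; it merely cites \cite{K11}, Proposition~3.1. So there is no ``paper's own proof'' to compare against, and I evaluate your sketch on its own.

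Your overall strategy (Cauchy--Schwarz to reduce to a support-volume estimate, then a transversality bound on the overlap set) is the standard one and does lead to the claimed inequality. However, the rule you state for selecting the pairing is incomplete. You say you choose the pairing ``so that $L_{\min}$ enters through $\underline{L}$'' and ``depending on which of $L_0,L_1,L_2$ is maximal''. That is not enough: the pairing must \emph{also} contain the function carrying the frequency $N_{\min}$, so that $\underline{N}=N_{\min}$ and the area (or lattice count) is $\lec N_{\min}^2$. For a concrete failure of the rule as you stated it: take $N_2=N_{\min}\ll N_0\sim N_1$, $L_2=L_{\max}$, $L_0=L_{\min}$. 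Avoiding $L_{\max}$ suggests pairing $(f,g_1)$, but then $\underline{N}_{01}\sim N_{\max}$ and the volume bound gives $V_{01}^{1/2}\lec (L_{\min}L_{\med})^{1/2}$, which does \emph{not} imply $L_{\max}^{1/2}(L_{\med}L_{\min})^{1/4}N_{\min}/N_{\max}$ when, say, $L_{\med}\sim N_{\max}^2$ and $N_{\min}\sim 1$. The correct move is to pair $(f,g_2)$ instead (so both the $L_{\min}$-function and the $N_{\min}$-function are in the pair), which yields $V_{02}^{1/2}\lec L_{\min}^{1/2}N_{\min}$ and the claim follows from $L_{\max}^{1/2}\gec N_{\max}$ and $L_{\med}\ge L_{\min}$. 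Note that with this corrected pairing rule the single bilinear bound already suffices in every case, so the geometric-mean step you mention is unnecessary.

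Two smaller points. First, on $\Tg^2$ the spatial variable is discrete, so your ``area of the strip'' must be replaced by a lattice-point count; this introduces the familiar $+1$ corrections (cf.\ Lemma~\ref{lem_bs}), but under the hypothesis $L_{\max}\gec N_{\max}^2$ these are harmless and absorbed as above. Second, when the Schr\"odinger frequency in the pair is $\sim 1$ your gradient lower bound $|2\xi_j\mp e|\sim N_j$ may degenerate; this is easily handled by a direct trivial estimate in that regime, but should be mentioned.
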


\begin{lem}[\cite{K11}, Proposition~3.2]\label{prop_te_highlow2}
Let $f,g_1,g_2\in L^2_{\ze}(\R \times \Zg ^2)$ be functions as in Lemma~\ref{prop_te_highlow}, and assume $N_1\gg N_2$ or $N_2\gg N_1$.
Then, we have
\eqq{\iint _{\ze _0=\ze _1-\ze _2}f(\ze _0)g_1(\ze _1)g_2(\ze _2)\lec L_{\max}^{\frac{1}{2}}L_{\med}^{\frac{3}{8}}L_{\min}^{\frac{3}{8}}\underline{N}_{12}^{\frac{1}{2}}\overline{N}_{12}^{-1}\norm{f}{L^2}\norm{g_1}{L^2}\norm{g_2}{L^2}.}
\end{lem}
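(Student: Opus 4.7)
My plan is as follows. Assume without loss of generality that $N_1 \gg N_2$, so that $N_0 \sim N_1 = \overline{N}_{12}$ and $\underline{N}_{12} = N_2$. On the integration domain $\tau_0 = \tau_1 - \tau_2$ the identity
\[
(\tau_0 \pm |\xi_0|) - (\tau_1 + |\xi_1|^2) + (\tau_2 + |\xi_2|^2) = \pm|\xi_0| - |\xi_1|^2 + |\xi_2|^2
\]
holds. The left-hand side is $O(L_{\max})$ on the support of the integrand, while the right-hand side has magnitude $\sim N_1^2$ since $|\xi_1|^2 \sim N_1^2$ dominates both $|\xi_2|^2 \sim N_2^2$ and $|\xi_0| \sim N_1$. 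Hence $L_{\max} \gec N_1^2 = \overline{N}_{12}^2$, which in particular verifies the hypothesis of Lemma~\ref{prop_te_highlow} and gives
\begin{equation}\label{plan:reso}
J \lec L_{\max}^{1/2} L_{\med}^{1/4} L_{\min}^{1/4}\tfrac{N_2}{N_1} \norm{f}{L^2}\norm{g_1}{L^2}\norm{g_2}{L^2}.
\end{equation}
The right-hand side of \eqref{plan:reso} is already bounded by the desired quantity whenever $L_{\med}L_{\min} \gec N_2^4$, since then $(L_{\med}L_{\min})^{1/8} \gec N_2^{1/2}$.

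To treat the complementary regime $L_{\med}L_{\min} \ll N_2^4$, I apply Cauchy--Schwarz to the trilinear integral and invoke the bilinear estimates of Lemma~\ref{lem_bs}. Writing $F, G_j$ for the spacetime inverse Fourier transforms of $f, g_j$, the three pairings
\[
J \le \norm{f}{L^2}\norm{P_{N_0}(G_1\overline{G_2})}{L^2},\quad J \le \norm{g_1}{L^2}\norm{FG_2}{L^2},\quad J \le \norm{g_2}{L^2}\norm{F\overline{G_1}}{L^2}
\]
---treated respectively by Lemma~\ref{lem_bs}(i) and Lemma~\ref{lem_bs}(ii)---produce different bounds depending on which of $L_0, L_1, L_2$ is maximal. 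The ratio $(\overline{L}/N + 1)^{1/2}$ appearing in Lemma~\ref{lem_bs} contributes a factor $L_{\max}^{1/2}/N^{1/2}$, which combined with the resonance $L_{\max}^{1/2} \gec N_1$ yields the $1/N_1$ gain needed in the target; meanwhile the low-frequency factor $N_{\min}^{1/2}$ in Lemma~\ref{lem_bs}(i), or $\underline{N}_{02}^{1/2} = N_2^{1/2}$ in Lemma~\ref{lem_bs}(ii) applied to $FG_2$, delivers $\underline{N}_{12}^{1/2}$. The exponent $\tfrac{3}{8}$ is precisely $\tfrac{1}{2}(\tfrac{1}{4} + \tfrac{1}{2})$, reflecting a geometric-mean interpolation between \eqref{plan:reso} and a companion bound of the form $L_{\max}^{1/2}L_{\med}^{1/2}L_{\min}^{1/2}/N_1$ with $L$-exponents $\tfrac{1}{2}$; the bilinear estimates supply this companion bound under the hypothesis $L_{\med}L_{\min} \ll N_2^4$.

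The main obstacle I anticipate is the careful case analysis: beyond the primary split by which $L_i$ is maximal, one must further sub-divide according to whether the additive $+1$ in $(\overline{L}/N + 1)^{1/2}$ is active, and one must track the asymmetry between $L_1$ and $L_2$ arising from $N_1 \gg N_2$. The most delicate sub-case occurs when $L_{\max} = L_2$, so that the largest modulation sits on the low-frequency Schrödinger factor, while both $L_0$ and $L_1$ remain smaller than $N_1$; in this regime the individual bilinear bounds are insufficient in isolation, and one must combine them with \eqref{plan:reso} through the interpolation scheme above to realize the full $\underline{N}_{12}^{1/2}\overline{N}_{12}^{-1}$ factor demanded by the target.
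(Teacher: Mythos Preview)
The paper does not prove this lemma; it is quoted verbatim from \cite{K11}, Proposition~3.2, and no argument is given here. Hence there is no ``paper's own proof'' to compare against, and your attempt has to stand on its own.

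Your reduction is sound up to a point: the resonance identity forces $L_{\max}\gec \overline{N}_{12}^{\,2}$, Lemma~\ref{prop_te_highlow} then handles the regime $L_{\med}L_{\min}\gec N_2^4$, and in the complementary regime the cases $L_0=L_{\max}$ and $L_1=L_{\max}$ do close directly from Lemma~\ref{lem_bs} (no interpolation is even needed there). The genuine gap is exactly where you place it, $L_2=L_{\max}$ with $L_0,L_1\ll N_1$, but your proposed resolution does not work. Take the concrete scenario $L_0\sim L_1\sim 1$, $L_2\sim N_1^2$, $1\ll N_2\ll N_1$; the target is $\sim N_2^{1/2}$ and your companion bound $L_{\max}^{1/2}L_{\med}^{1/2}L_{\min}^{1/2}N_1^{-1}$ would be $\sim 1$. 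But none of the three Cauchy--Schwarz pairings via Lemma~\ref{lem_bs} yield anything close to $1$: pairing $f$ out gives $L_1^{1/2}(L_{\max}/N_0)^{1/2}N_2^{1/2}\sim N_1^{1/2}N_2^{1/2}$; pairing $g_1$ out gives $L_0^{1/2}(L_{\max}/N_2)^{1/2}N_2^{1/2}\sim N_1$; pairing $g_2$ out gives $L_{\min}^{1/2}N_1^{1/2}\sim N_1^{1/2}$. Combined with Lemma~\ref{prop_te_highlow}'s bound $\sim N_2$, one checks that no geometric interpolation of these four quantities can reach $N_2^{1/2}$ uniformly in $1\ll N_2\ll N_1$ (eliminating the positive $N_1$-power forces weight~$1$ on the Lemma~\ref{prop_te_highlow} bound, which then overshoots by $N_2^{1/2}$). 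So the companion bound is simply not available from Lemma~\ref{lem_bs} in this sub-case, and the scheme collapses.

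The proof in \cite{K11} does not treat the estimate of Lemma~\ref{prop_te_highlow} as a black box; one has to open it up and exploit finer orthogonality (in the angular variable of the low-frequency Schr\"odinger factor) that is invisible at the level of the $L^2$ bilinear bounds in Lemma~\ref{lem_bs}. That extra input is what supplies the missing factor $(N_2/N_1)^{1/2}$ when the large modulation sits on $g_2$.
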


\begin{lem}[\cite{K11}, Corollary~3.4]\label{cor_te_low}
Let $f,g_1,g_2\in L^2_{\ze}(\R \times \Zg ^2)$ be functions as in Lemma~\ref{prop_te_highlow}, and assume that $N_0\lec 1$.
Then, we have
\eqq{\iint _{\ze _0=\ze _1-\ze _2}f(\ze _0)g_1(\ze _1)g_2(\ze _2)\lec (L_0L_1L_2)^{\frac{1}{6}}\norm{f}{L^2}\norm{g_1}{L^2}\norm{g_2}{L^2}.}
\end{lem}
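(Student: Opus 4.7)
The plan is to rewrite the Fourier-side trilinear form via Plancherel as a physical-space integral, and then apply Hölder's inequality in carefully chosen mixed space--time $L^p$ norms together with Bernstein's inequality. Let $W,U_1,U_2$ denote the inverse spacetime Fourier transforms of $f,g_1,g_2$. By Parseval,
\begin{equation*}
\iint_{\ze_0=\ze_1-\ze_2} f(\ze_0)g_1(\ze_1)g_2(\ze_2) = c \int_{\R \times \Tg^2} W(t,x)\,\overline{U_1(t,x)}\,U_2(t,x)\,dt\,dx,
\end{equation*}
and I apply H\"older's inequality with $t$-exponents $(3,3,3)$ and $x$-exponents $(\infty,2,2)$:
\begin{equation*}
\Big|\iint_{\ze_0=\ze_1-\ze_2} fg_1g_2\Big| \lec \|W\|_{L^3_t L^\infty_x}\,\|U_1\|_{L^3_t L^2_x}\,\|U_2\|_{L^3_t L^2_x}.
\end{equation*}

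For the $W$-factor, the hypothesis $N_0 \lec 1$ is crucial: the spatial Fourier support $\{|\xi|\lec 1\}$ lets me use Bernstein in $x$ to trade $L^\infty_x$ for $L^2_x$ at no cost, so $\|W\|_{L^3_t L^\infty_x} \lec \|W\|_{L^3_t L^2_x}$. To bound each $L^3_t L^2_x$ norm I use Bernstein in time fiberwise. For each fixed $\xi$ the $\tau$-support of $\ti{W}(\tau,\xi)$ is contained in an interval of length $O(L_0)$ (around $\mp|\xi|$), and the $\tau$-support of $\ti{U_j}(\tau,\xi)$ is contained in an interval of length $O(L_j)$ (around $-|\xi|^2$). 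One-dimensional Bernstein per $\xi$ combined with Parseval in $x$ yields
\begin{equation*}
\|W\|_{L^\infty_t L^2_x} \lec L_0^{1/2}\|W\|_{L^2_{t,x}}, \qquad \|U_j\|_{L^\infty_t L^2_x} \lec L_j^{1/2}\|U_j\|_{L^2_{t,x}}.
\end{equation*}
Interpolating each with the trivial $L^2_{t,x}$ bound using $\|F\|_{L^3_t} \leq \|F\|_{L^2_t}^{2/3}\|F\|_{L^\infty_t}^{1/3}$ produces the $L_j^{1/6}$ weights, and combining these with Plancherel gives the desired estimate $(L_0L_1L_2)^{1/6}\|f\|_{L^2}\|g_1\|_{L^2}\|g_2\|_{L^2}$.

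The subtle point, and what I would identify as the main obstacle, is the asymmetric choice of H\"older exponents. A symmetric split such as $L^3_{t,x} \times L^3_{t,x} \times L^3_{t,x}$ would force us to estimate $\|U_j\|_{L^3_{t,x}}$, and a pure-Bernstein bound of that norm incurs a spatial-frequency loss $\sim (N_j^2)^{1/6}$ arising from the $\xi$-volume of the frequency support; removing this loss would require invoking dispersive ($L^4$-Strichartz) estimates, which would then give suboptimal modulation weights. Placing $L^2$ on the $x$-variable for the Schr\"odinger factors eliminates the $N_j$ dependence entirely, while the low-frequency hypothesis $N_0\lec 1$ on the wave factor lets us pay $L^\infty_x$ for free by Bernstein. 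Thus the bound is purely Bernstein-based, and no Strichartz input is needed.
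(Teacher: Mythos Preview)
Your proof is correct. Note, however, that the present paper does not give its own proof of this lemma; it is quoted from \cite{K11} (Corollary~3.4 there) and stated here without argument, so there is no in-paper proof to compare against. Your approach---Plancherel to pass to physical space, H\"older with exponents $(L^3_tL^\infty_x,\,L^3_tL^2_x,\,L^3_tL^2_x)$, spatial Bernstein on the low-frequency wave factor to trade $L^\infty_x$ for $L^2_x$, and temporal Bernstein from the modulation localization followed by interpolation---is the standard elementary route for this type of low-frequency trilinear bound and is essentially what one expects the proof in \cite{K11} to look like.
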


\begin{lem}[\cite{K11}, Proposition~3.5]\label{prop_te_highhigh-m}
Let $f,g_1,g_2\in L^2_{\ze}(\R \times \Zg ^2)$ be functions as in Lemma~\ref{prop_te_highlow}.
Assume that $1\ll N_0\lec N_1\sim N_2\lec L_{\max}\ll N_1^2$.
Then, we have
\eqq{\iint _{\ze _0=\ze _1-\ze _2}f(\ze _0)g_1(\ze _1)g_2(\ze _2)\lec L_{\max}^{\frac{3}{8}+}L_{\med}^{\frac{3}{8}+}L_{\min}^{\frac{1}{4}}\big( \frac{N_0}{N_1}\big) ^{0+}\norm{f}{L^2}\norm{g_1}{L^2}\norm{g_2}{L^2}.}
\end{lem}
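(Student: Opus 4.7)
I would proceed by dyadic decomposition of the modulations and case analysis on which of $L_0,L_1,L_2$ is the largest, combining Cauchy--Schwarz in one Fourier variable with the appropriate bilinear estimate from Lemma~\ref{lem_bs}. The underlying resonance identity from $\ze_0=\ze_1-\ze_2$ is
\eqq{
\Omega := \mp|\xi_0|+|\xi_1|^2-|\xi_2|^2 = \mp|\xi_0|+(\xi_1+\xi_2)\cdot\xi_0,\qquad L_{\max}\gec|\Omega|,
}
and under the hypotheses $N_1\sim N_2\gg N_0$ and $L_{\max}\ll N_1^2$ this forces a nontrivial restriction on the interacting frequencies which will be responsible for the $(N_0/N_1)^{0+}$ gain.

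\emph{Case $L_1=L_{\max}$ (or $L_2=L_{\max}$, by symmetry).} Cauchy--Schwarz in $\ze_1$ and Lemma~\ref{lem_bs}(ii) applied to $fg_2$ yield
\eqq{
\iint _{\ze _0=\ze _1-\ze _2}fg_1g_2 \lec \norm{g_1}{L^2}\norm{fg_2}{L^2_{t,x}} \lec \underline{L}_{02}^{\frac{1}{2}}\Big(\tfrac{\overline{L}_{02}}{N_2}+1\Big)^{\frac{1}{2}}N_0^{\frac{1}{2}}\norm{f}{L^2}\norm{g_1}{L^2}\norm{g_2}{L^2}.
}
In the principal subcase $\overline{L}_{02}\gec N_2\sim N_1$ this simplifies to $L_{\min}^{1/2}L_{\med}^{1/2}(N_0/N_1)^{1/2}$, which dominates $L_{\max}^{3/8+}L_{\med}^{3/8+}L_{\min}^{1/4}(N_0/N_1)^{0+}$ since $L_{\min}\le L_{\med}\le L_{\max}$ and $N_0\le N_1$. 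The complementary subcase $\overline{L}_{02}\ll N_1$ is treated using the $+1$ term in the wave estimate together with the hypothesis $L_{\max}\gec N_1$, yielding a comparable improvement.

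\emph{Case $L_0=L_{\max}$.} Cauchy--Schwarz in $\ze_0$ and Lemma~\ref{lem_bs}(i) applied to $g_1\bbar{g_2}$ give
\eqq{
\iint _{\ze _0=\ze _1-\ze _2}fg_1g_2 \lec \norm{f}{L^2}\,\underline{L}_{12}^{\frac{1}{2}}\Big(\tfrac{\overline{L}_{12}}{N_0}+1\Big)^{\frac{1}{2}}N_0^{\frac{1}{2}}\norm{g_1}{L^2}\norm{g_2}{L^2},
}
which in the nontrivial regime $\overline{L}_{12}\gec N_0$ reduces to $L_{\min}^{1/2}L_{\med}^{1/2}\norm{f}{L^2}\norm{g_1}{L^2}\norm{g_2}{L^2}$. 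This already controls the $L$-factors of the claim but misses the $(N_0/N_1)^{0+}$ gain. To extract that factor I would further decompose $g_1,g_2$ into angular sectors of width $\theta_*\sim (N_0/N_1)^{1-\e}$ around the direction of $\xi_1+\xi_2$: the resonance bound $|\Omega|\lec L_{\max}\ll N_1^2$ together with $|\xi_1+\xi_2|\sim N_1$ and $|\xi_0|\sim N_0$ pins the angle between $\xi_0$ and $\xi_1+\xi_2$ to within $O(L_{\max}/(N_0N_1))\ll 1$, so that only $O((N_0/N_1)^{-\e})$ sector pairs contribute for each fixed $\xi_0$. Applying the previous bilinear bound sector-wise and summing via Cauchy--Schwarz over sector indices then recovers the $(N_0/N_1)^{0+}$ factor at the cost of a harmless $L_{\max}^{0+}L_{\med}^{0+}$ loss coming from the logarithmic count of active sectors.

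The hardest step is the angular decomposition in the case $L_0=L_{\max}$. The $(N_0/N_1)^{0+}$ gain is small, so the sector width must be chosen finely enough to exploit the resonance constraint but coarsely enough that the bilinear Schr\"odinger estimate remains sharp on each sector pair; balancing these competing requirements and absorbing the resulting logarithmic losses into the $0+$ exponents on $L_{\max}$ and $L_{\med}$ (rather than on $L_{\min}$) is the delicate part of the argument. The assumption $L_{\max}\ll N_1^2$ is crucial here to prevent the angular constraint from becoming degenerate.
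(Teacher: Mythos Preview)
The paper does not prove this lemma; it is stated with a citation to \cite{K11}, Proposition~3.5, and no argument is supplied here. There is therefore no proof in this paper to compare your proposal against.

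On the substance of your sketch: the overall architecture (case split on which $L_j$ is maximal, Cauchy--Schwarz plus the bilinear Lemma~\ref{lem_bs}, and an angular refinement to capture the $(N_0/N_1)^{0+}$ gain) is the standard route for such estimates and is broadly in line with how \cite{K11} proceeds. However, your treatment of the subcase $L_1=L_{\max}$, $\overline{L}_{02}\ll N_1$ does not close. The bound you obtain there from the ``$+1$'' term of Lemma~\ref{lem_bs}(ii) is $L_{\min}^{1/2}N_0^{1/2}$, and inserting the hypothesis $L_{\max}\gec N_1$ alone is not enough to dominate this by $L_{\max}^{3/8+}L_{\med}^{3/8+}L_{\min}^{1/4}(N_0/N_1)^{0+}$: take $N_0\sim N_1$, $L_{\max}\sim N_1$, $L_{\med}\sim L_{\min}\sim 1$, where your bound is $\sim N_1^{1/2}$ while the target is only $\sim N_1^{3/8+}$. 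Some additional orthogonality (an angular decomposition, or interpolation with the $L_{\max}\gec N_1^2$ estimate of Lemma~\ref{prop_te_highlow}) is needed in this case as well, not only when $L_0=L_{\max}$. In the $L_0=L_{\max}$ case your angular argument pins the angle between $\xi_0$ and $\xi_1+\xi_2$ for \emph{fixed} $\xi_0$, but $\xi_0$ itself ranges over all directions, so the passage from ``one $\xi_0$'' to the sector-pair count requires an almost-orthogonality argument rather than a direct count; this is the step that actually produces the $(N_0/N_1)^{0+}$ factor and needs to be written out carefully.
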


\begin{lem}[\cite{K11}, Proposition~3.8]\label{prop_te_highhigh-l_2d}
Let $f,g_1,g_2\in L^2_{\ze}(\R \times \Zg ^2)$ be real-valued nonnegative functions with the support properties
\eqq{\Supp{f}{\shugo{|k|\gg 1}\cap \FR{W}_{L_0}^\pm},\quad \Supp{g_j}{\FR{P}_{N_j}\cap \FR{S}_{L_j}},\quad j=1,2.}
Assume that $1\ll N_1\sim N_2$ and $L_{\max}\ll N_1$.
Then, we have
\eqq{\iint _{\ze _0=\ze _1-\ze _2}f(\ze _0)g_1(\ze _1)g_2(\ze _2)\lec L_{\max}^{\frac{3}{8}}L_{\med}^{\frac{3}{8}}\norm{f}{L^2}\norm{g_1}{L^2}\norm{g_2}{L^2}.}
\end{lem}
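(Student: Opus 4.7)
The plan is to exploit the strong geometric constraint imposed by the modulation hypothesis, together with the bilinear estimates of Lemma~\ref{lem_bs}. From $\zeta_0 = \zeta_1-\zeta_2$ and the three modulation bounds, the resonance identity
\[
\pm|k_0| - (|k_1|^2 - |k_2|^2) \;=\; \pm|k_0| - (k_1+k_2)\cdot k_0 \;=\; O(L_{\max}),
\]
combined with $N_1\sim N_2\sim |k_1+k_2|\gg L_{\max}$, forces $(k_1+k_2)$ to be almost perpendicular to $k_0 = k_1-k_2$. In particular $|k_1|\approx|k_2|$, and for fixed $k_2$ and $|k_0|\sim N_0$ the direction of $k_0$ is restricted to an arc on the unit circle of length $O(L_{\max}/(N_0 N_1))$, up to a bounded number of translates. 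This transverse localization is the source of gain in the proof.

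First, by Plancherel I rewrite the trilinear form as a spacetime integral $\int w\,u_1\,\overline{u_2}\,dt\,dx$, where $w,u_j$ are the inverse spacetime Fourier transforms of $f,g_j$. Since $|k_0| = |k_1-k_2|\lec N_1$, I decompose $w = \sum_{N_0\lec N_1}w_{N_0}$ dyadically in spatial frequency, with $w_{N_0}$ localized to $\FR{P}_{N_0}\cap\FR{W}^{\pm}_{L_0}$. For each $N_0$, Cauchy--Schwarz paired with the bilinear Schr\"odinger--wave estimate of Lemma~\ref{lem_bs}(ii) yields
\[
\Big|\int w_{N_0}\,u_1\,\overline{u_2}\Big| \;\lec\; \underline L_{01}^{1/2}\big(\overline L_{01}/N_1+1\big)^{1/2}N_0^{1/2}\|w_{N_0}\|_2\|u_1\|_2\|u_2\|_2,
\]
where the bracket equals $O(1)$ because $L_{\max}\ll N_1$. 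A parallel bound is obtained by first pairing $(u_1,\overline{u_2})$ via Lemma~\ref{lem_bs}(i) and then against $w_{N_0}$, which gives the variant with $\underline L_{12}^{1/2}N_0^{1/2}$ instead.

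Naively summing these contributions over dyadic $N_0\lec N_1$ produces a spurious $N_1^{1/2}$ loss via $\sum_{N_0}N_0^{1/2}\|w_{N_0}\|_2$, which must be canceled to reach an $N$-free bound, and this is the main obstacle. I plan to cancel it by further decomposing the Schr\"odinger frequencies $k_1,k_2$ into almost-orthogonal angular sectors of opening $\theta\sim (L_{\max}/N_1)^{1/2}$ dictated by the geometric constraint above, so that only $O(1)$ sector pairs interact for each fixed $(N_0,k_0)$; an orthogonality/Schur argument then converts the $N_0^{1/2}$ factor into a gain of order $L_{\max}^{1/2}$. Interpolating the two bilinear bounds from the previous paragraph---one sharp on $L_{\min}$, the other on $L_{\max}$ and $L_{\med}$---and summing over sectors and over $N_0$ should distribute the total $L$-weight as $L_{\max}^{3/8}L_{\med}^{3/8}$, matching the claim. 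The delicate point is verifying that the angular decomposition is compatible with the modulation localizations $\FR{S}_{L_j}$ and $\FR{W}^{\pm}_{L_0}$; I expect this to follow from a standard almost-orthogonality estimate in the $(\tau,k)$-variable once the sectors are defined with an appropriate product structure.
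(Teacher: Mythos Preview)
The paper does not prove this lemma at all: it is quoted verbatim from \cite{K11}, Proposition~3.8, so there is no in-paper argument to compare against. What I can do is assess whether your sketch constitutes a proof and whether it is headed in the right direction.

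Your geometric starting point is correct: the resonance identity $|k_1|^2-|k_2|^2\mp|k_0|=O(L_{\max})$ together with $L_{\max}\ll N_1$ forces $\big||k_1|-|k_2|\big|\lec 1$ and restricts $k_0$ to a thin neighborhood of the set where $(k_1+k_2)\cdot k_0/|k_0|=\pm 1$. The angular/sector decomposition you propose is indeed the mechanism behind the estimate in \cite{K11}. However, what you have written is a plan, not a proof, and the gap is precisely at the crucial step. After obtaining the bound $\underline L^{1/2}N_0^{1/2}$ from Lemma~\ref{lem_bs}, you note the $N_1^{1/2}$ loss from $\sum_{N_0\lec N_1}N_0^{1/2}$ and then assert that an angular decomposition with aperture $\theta\sim (L_{\max}/N_1)^{1/2}$, plus ``interpolation'', will produce $L_{\max}^{3/8}L_{\med}^{3/8}$. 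None of this is carried out: you do not define the sectors, do not verify the claimed almost-orthogonality (how many sector pairs interact for fixed $N_0$, and why), and do not show how the interpolation actually yields the exponent $3/8$ rather than, say, $1/2$ on one factor. The sentence ``I expect this to follow from a standard almost-orthogonality estimate'' is where the entire content of the lemma lies.

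There is also an imprecision in your setup: you write $N_1\sim N_2\sim |k_1+k_2|$, but $|k_1+k_2|\sim N_1$ is not automatic from $N_1\sim N_2$; from the parallelogram law $|k_1+k_2|^2+|k_0|^2\sim N_1^2$, so $|k_1+k_2|$ can be small when $|k_0|\sim N_1$. Your angular heuristic (arc length $O(L_{\max}/(N_0N_1))$ for $k_0$) and your sector size $\theta\sim(L_{\max}/N_1)^{1/2}$ are also not obviously compatible, and you do not explain the passage between them. To turn this into a proof you would have to carry out the Whitney-type angular decomposition explicitly, count interacting sectors, and perform the dyadic sums; as written, the proposal identifies the right ingredients but leaves the substance undone.
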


\begin{lem}[\cite{K11}, Proposition~4.3]\label{prop_be}
Let $\frac{1}{2}\le s\le 1$.
Then, we have
\begin{align}
\norm{\Sc{I}_S(uw)}{X^{s,\frac{1}{2},1}_S(\de )}+\norm{\Sc{I}_S(u\bar{w})}{X^{s,\frac{1}{2},1}_S(\de )}&\lec \de ^{\frac{1}{2}-}\norm{u}{X^{s,\frac{1}{2},1}_S(\de )}\norm{w}{X^{0,\frac{1}{2},1}_{W_{\pm}}(\de )},\label{est_be_s}\\
\norm{\Sc{I}_{W_+}(|\nabla |(u\bar{v}))}{X^{0,\frac{1}{2},1}_{W_+}(\de )}&\lec \de ^{\frac{1}{2}-}\norm{u}{X^{s,\frac{1}{2},1}_S(\de )}\norm{v}{X^{s,\frac{1}{2},1}_S(\de )}.\label{est_be_w}
\end{align}
\end{lem}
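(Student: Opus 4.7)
The plan is to reduce both bilinear estimates to the trilinear integral bounds provided by Lemmas~\ref{prop_te_highlow}--\ref{prop_te_highhigh-l_2d}. First, invoking \eqref{est_lin_duh} with some $b<\tfrac{1}{2}$ chosen close to $\tfrac{1}{2}$, it suffices to prove the non-Duhamel versions
\eqq{\norm{uw}{X^{s,-b,1}_S}+\norm{u\bar{w}}{X^{s,-b,1}_S}&\lec \norm{u}{X^{s,\frac{1}{2},1}_S}\norm{w}{X^{0,\frac{1}{2},1}_{W_\pm}},\\ \norm{|\nabla|(u\bar{v})}{X^{0,-b,1}_{W_+}}&\lec \norm{u}{X^{s,\frac{1}{2},1}_S}\norm{v}{X^{s,\frac{1}{2},1}_S},}
on all of $\R\times Z$, with the $\de^{\frac{1}{2}-b}$ factor in \eqref{est_lin_duh} supplying the final $\de^{\frac{1}{2}-}$.

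Next, by Plancherel and duality, each estimate is equivalent to a trilinear integral bound for $\iint_{\ze_0=\ze_1-\ze_2}f_0(\ze_0)f_1(\ze_1)f_2(\ze_2)$ with nonnegative $f_j$ supported in the dyadic boxes $\FR{P}_{N_j}\cap \FR{S}_{L_j}$ or $\FR{P}_{N_j}\cap \FR{W}^\pm_{L_j}$, weighted by appropriate powers of the $N_j$ and $L_j$ from the norms. I would then distinguish cases according to the frequency-modulation configuration: if the output frequency is $\lec 1$, apply Lemma~\ref{cor_te_low}; if the two Schr\"odinger frequencies differ significantly ($N_1\gg N_2$ or vice versa), apply Lemma~\ref{prop_te_highlow2}; and when $N_1\sim N_2\gg 1$, split further on the size of $L_{\max}$ relative to $N_1$ and $N_1^2$, invoking Lemma~\ref{prop_te_highlow} when $L_{\max}\gec N_1^2$, Lemma~\ref{prop_te_highhigh-m} when $N_1\ll L_{\max}\ll N_1^2$, and Lemma~\ref{prop_te_highhigh-l_2d} when $L_{\max}\ll N_1$.

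The final step is to sum over the dyadic parameters. The combination of the $L^{1/2}$ weights in the $X^{s,\frac{1}{2},1}$ norms with the modulation gains in each trilinear lemma (whose total exponent is always strictly less than $\tfrac{1}{2}+\tfrac{1}{2}+b$) guarantees convergence of the $\ell^1_L$ sums once $b<\tfrac{1}{2}$ is taken, while a positive power of $\underline{N}/\overline{N}$ appearing in each trilinear estimate supplies the $\ell^2_N$ summability in the frequencies. The hypothesis $s\ge \tfrac{1}{2}$ is exactly what is needed to compensate the $|\nabla|$ in front of $u\bar{v}$ in \eqref{est_be_w}.

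The principal obstacle is expected to be the ``high-high to low'' interaction in the wave estimate \eqref{est_be_w}, where nearly parallel Schr\"odinger modes at $N_1\sim N_2\gg N_0$ produce a low-frequency wave output: here the derivative $|\nabla|$ costs a factor $N_0$, and one must exploit the sharp gain $(N_0/N_1)^{0+}$ of Lemmas~\ref{prop_te_highhigh-m} and \ref{prop_te_highhigh-l_2d}, which itself relies on a careful count of lattice points lying near the resonance set on $\Zg^2$. The small $L_{\max}^{0+}$ loss accompanying these estimates is absorbed into the slack $\tfrac{1}{2}-b>0$.
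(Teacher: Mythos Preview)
The paper does not prove this lemma; it is quoted verbatim from \cite{K11}, Proposition~4.3, and no argument is given here. Your outline is nonetheless the correct one and coincides with the strategy of \cite{K11}: reduce via \eqref{est_lin_duh} to a bilinear bound in $X^{s,-b,1}$ with $b<\tfrac12$, dualize to a trilinear integral with one wave and two Schr\"odinger factors, and then run the case analysis with Lemmas~\ref{prop_te_highlow}--\ref{prop_te_highhigh-l_2d} exactly as you describe. One small correction: in the regime $N_1\sim N_2\gg 1$ the intermediate modulation case should read $N_1\lec L_{\max}\ll N_1^2$ (not $N_1\ll L_{\max}$), matching the hypothesis of Lemma~\ref{prop_te_highhigh-m}; otherwise the boundary case $L_{\max}\sim N_1$ would fall through the cracks.
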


Next, we give a Strichartz-type estimate for the periodic (reduced) wave equation.
It seems that the Strichartz estimates in periodic setting do not follow immediately from that on the whole space, because the finite speed of propagation does not hold for the reduced wave linear propagator $e^{\mp it|\nabla|}$.
An elementary proof of it will be given in Appendix.
\begin{lem}\label{lem_BEforwave}
Let $N, L\ge 1$ be dyadic numbers, and suppose that $u\in L^2(\R \times \Tg ^2)$ satisfies $\Supp{\ti{u}}{\FR{P}_{N}\cap \FR{W}^{\pm}_{L}}$.
Then we have
\eqq{\norm{u}{L^4_{t,x}}\lec L^{\frac{3}{8}}N^{\frac{3}{8}}\norm{u}{L^2_{t,x}}.}
\end{lem}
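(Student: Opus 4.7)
The plan is to reduce the $L^4_{t,x}$ Strichartz-type bound to a bilinear $L^2$ estimate on $u^2$ via the identity $\|u\|_{L^4_{t,x}}^2 = \|u^2\|_{L^2_{t,x}}$, which by Plancherel equals $\|\tilde u \ast \tilde u\|_{L^2_{\tau,k}}$. Applying Cauchy--Schwarz inside the convolution gives
\[
|\tilde u \ast \tilde u(\tau_0,k_0)|^2 \lec |\mathcal M(\tau_0,k_0)| \cdot (|\tilde u|^2 \ast |\tilde u|^2)(\tau_0,k_0),
\]
where $\mathcal M(\tau_0,k_0) = \{(\tau_1,k_1) \in \operatorname{supp}\tilde u : (\tau_0-\tau_1, k_0-k_1) \in \operatorname{supp}\tilde u\}$. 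Integration then yields $\|u^2\|_{L^2}^2 \lec \sup_{\tau_0,k_0}|\mathcal M| \cdot \|u\|_{L^2}^4$.

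The two modulation conditions $|\tau_j + |k_j|| \lec L$ combine to give $\bigl||k_1|+|k_0-k_1|+\tau_0\bigr| \lec L$, confining $k_1$ to a strip about the ellipse with foci $0$ and $k_0$. Combining the coarea formula with the gradient identity
\[
|\nabla_{k_1}(|k_1|+|k_0-k_1|)|^2 = \frac{(|k_1|+|k_0-k_1|)^2 - |k_0|^2}{|k_1|\,|k_0-k_1|},
\]
the strip has area $\lec LN^2/\sqrt{\tau_0^2-|k_0|^2}$ in the region $\{|k_1|,|k_0-k_1|\sim N\}$, as well as the trivial $\lec N^2$. Multiplying by the $\tau_1$-range $\lec L$, I obtain $|\mathcal M| \lec L \cdot \min\bigl(N^2,\, LN^2/\sqrt{\tau_0^2-|k_0|^2}\bigr)$.

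The crude sup $\sup|\mathcal M|\lec LN^2$ yields only $\|u\|_{L^4}\lec L^{1/4}N^{1/2}\|u\|_{L^2}$, which can also be deduced from interpolating $\|u\|_{L^2}$ with the $L^\infty$ bound $\|u\|_{L^\infty}\lec NL^{1/2}\|u\|_{L^2}$ (itself a consequence of Cauchy--Schwarz over the Fourier support of volume $\sim N^2 L$). To reach the sharper $L^{3/8}N^{3/8}$ exponent, the plan is to establish the complementary estimate $\|u\|_{L^4}\lec L^{1/2}N^{1/4}\|u\|_{L^2}$; the target then follows by the geometric-mean inequality $\min(a,b)\le \sqrt{ab}$.

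The main obstacle is precisely this complementary ``$\R^2$-type'' estimate on the torus. On $\R^2$ it is the standard wave Strichartz, but on $\Tg^2$ a direct transference fails because $e^{\mp it|\nabla|}$ lacks finite speed of propagation, as noted in the paper. The idea is to avoid the crude $\sup|\mathcal M|$ and instead integrate $|\mathcal M|$ against the density $|\tilde u|^2\ast|\tilde u|^2$, exploiting that the degenerate region where $\sqrt{\tau_0^2-|k_0|^2}$ is small has small measure in $(\tau_0,k_0)$-space. A dyadic decomposition in the degeneracy parameter, combined with H\"older (or a direct lattice-point count on the non-degenerate ellipse, where the count improves to $\sim NL$ rather than $N^2$), should produce the $L^{1/2}N^{1/4}$ bound and complete the proof.
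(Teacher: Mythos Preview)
Your initial reduction to the bilinear $L^2$ estimate and the identification of the confocal-ellipse geometry is correct and matches the paper's setup. The gap is in the second half of your plan.

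The coarea computation you sketch gives an \emph{area} bound for the elliptical strip in $\R^2$, not a lattice-point count in $\Zg^2$. On the torus the spatial frequency is discrete, so $|\mathcal M(\tau_0,k_0)|$ is (up to the factor $L$ from $\tau_1$) the number of lattice points in the strip between two confocal ellipses. When the ellipse is highly eccentric (the degenerate regime $A$ close to $|k_0|$), this strip can be thinner than one unit in the transverse direction while still being $\sim N$ long, so the area bound $LN^2/\sqrt{\tau_0^2-|k_0|^2}$ grossly undercounts the lattice points. Your proposed fix---integrating $|\mathcal M|$ against $|\tilde u|^2*|\tilde u|^2$ and using that the degenerate $(\tau_0,k_0)$-region is small---cannot close this, because nothing prevents $|\tilde u|^2*|\tilde u|^2$ from concentrating entirely on that degenerate region; the H\"older or dyadic-shell argument then reverts to the sup bound there. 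In short, the complementary estimate $\|u\|_{L^4}\lec L^{1/2}N^{1/4}\|u\|_{L^2}$ that you want to interpolate against is not established on $\Tg^2$, and is likely false as a uniform bound.

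The paper avoids this split entirely. It proves directly that the elliptical strip $\{k': |k'|,|k-k'|\le N,\ |k'|+|k-k'|\in[A,A+L]\}$ is covered by $O(N^{3/2}L^{1/2})$ \emph{unit squares}, which is the right substitute for a lattice-point count. The key input is a clean geometric lemma: a unit square cannot intersect both $\{x^2/a^2+y^2/b^2\le 1\}$ and $\{x^2/(a+100a/b)^2+y^2/(b+100)^2\ge 1\}$ when $a\ge b\gg 1$. With this in hand, the proof runs through five cases on the relative sizes of $L,N,|k|,A$; the genuinely delicate case is $L\ll N$, $|k|\gg 1$, $|k|+10L\le A\le 10N$, where the confocal-ellipse covering count works out to exactly $N^{3/2}L^{1/2}$. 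This gives $\sup|\mathcal M|\lec L^{3/2}N^{3/2}$ in one stroke, and hence $\|u\|_{L^4}\lec L^{3/8}N^{3/8}\|u\|_{L^2}$ with no interpolation.
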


Finally, we collect some estimates valid only for the nonperiodic case.
The next one is a refinement of Lemma~\ref{lem_bs} (ii) above.
\begin{lem}[\cite{BHHT09}, Proposition~4.3~(ii)]\label{lem_bs_R2}
Let $N_j, L_j\ge 1$ ($j=0,1$) be dyadic numbers.
Suppose that $u,w\in L^2(\R \times \R^2)$ satisfy
\eqq{\Supp{\ti{w}}{\FR{P}_{N_0}\cap \FR{W}^{\pm}_{L_0}},\qquad \Supp{\ti{u}}{\FR{P}_{N_1}\cap \FR{S}_{L_1}}.}
Then we have
\eqq{\norm{wu}{L^2_{t,x}}+\norm{\bar{w}u}{L^2_{t,x}}\lec L_0^{\frac{1}{2}}L_1^{\frac{1}{2}}\Big( \frac{\underline{N}_{01}}{N_1}\Big) ^{\frac{1}{2}}\norm{w}{L^2_{t,x}}\norm{u}{L^2_{t,x}}.}
\end{lem}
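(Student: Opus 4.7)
My plan is to follow the standard bilinear Fourier-restriction recipe in two space dimensions: reduce via Plancherel and Cauchy–Schwarz in the convolution variable to an $L^\infty$ bound for the volume of the admissible frequency set, and then exploit the geometry of the wave–Schr\"odinger resonance on $\R^2$. Writing $\zeta=(\tau,\xi)$ and $\zeta_1=(\tau_1,\xi_1)$, Plancherel identifies $\|wu\|_{L^2_{t,x}}^2$ with $\|\tilde w\ast \tilde u\|_{L^2_\zeta}^2$, and Cauchy–Schwarz in the convolution variable gives
\eqq{\|wu\|_{L^2_{t,x}}^2\le \|w\|_{L^2}^2\|u\|_{L^2}^2\cdot\sup_\zeta|A(\zeta)|,\qquad A(\zeta):=\{\zeta_1\in\operatorname{supp}\tilde w:\zeta-\zeta_1\in\operatorname{supp}\tilde u\}.}
The target bound, squared, is $L_0L_1\underline{N}_{01}/N_1$, so it suffices to prove $\sup_\zeta|A(\zeta)|\lec L_0L_1\underline{N}_{01}/N_1$.

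For the $\tau_1$-integral: for each fixed $\xi_1$ with $|\xi_1|\sim N_0$ and $|\xi-\xi_1|\sim N_1$, the wave constraint confines $\tau_1$ to an interval of length $\sim L_0$ around $\mp|\xi_1|$, and the Schr\"odinger constraint confines it to an interval of length $\sim L_1$ around $\tau+|\xi-\xi_1|^2$. Their intersection has measure $\le\min(L_0,L_1)$ and is non-empty precisely when the wave–Schr\"odinger resonance function
\eqq{\Phi(\xi_1):=\tau+|\xi-\xi_1|^2\pm|\xi_1|}
satisfies $|\Phi(\xi_1)|\lec L_0+L_1$. The task reduces to establishing
\eqq{\bigl|\{\xi_1\in\R^2:|\xi_1|\sim N_0,\ |\xi-\xi_1|\sim N_1,\ |\Phi(\xi_1)|\lec L_0+L_1\}\bigr|\lec\frac{(L_0+L_1)\underline{N}_{01}}{N_1}.}

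The main obstacle is this $\xi_1$-volume bound, which I would handle by the coarea formula. Since $\nabla_{\xi_1}\Phi=-2(\xi-\xi_1)\pm\xi_1/|\xi_1|$, one has $|\nabla_{\xi_1}\Phi|\ge 2|\xi-\xi_1|-1\gec N_1$ whenever $N_1\gg 1$ (the complementary case $N_1\sim 1$ being absorbed by a trivial measure bound, since then the annulus $\{|\xi-\xi_1|\sim 1\}$ has area $O(1)$). Hence
\eqq{\bigl|\{\xi_1\in D:|\Phi(\xi_1)|\lec L_0+L_1\}\bigr|=\int_{|c|\lec L_0+L_1}\int_{\Phi^{-1}(c)\cap D}\frac{d\mathcal H^1}{|\nabla\Phi|}\,dc\lec\frac{L_0+L_1}{N_1}\sup_c\mathcal H^1\!\bigl(\Phi^{-1}(c)\cap D\bigr),}
where $D$ is the intersection of the two annuli. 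The required geometric fact is $\mathcal H^1(\Phi^{-1}(c)\cap D)\lec\underline{N}_{01}$: inside $D$ the direction of $\nabla\Phi$ is dominated by $-2(\xi-\xi_1)$, so level curves of $\Phi$ are approximately concentric arcs around $\xi$; an arc inside $\{|\xi-\xi_1|\sim N_1\}$ has length $\lec N_1$, while the further restriction $|\xi_1|\sim N_0$ cuts the length down to $\lec N_0$ when $N_0\le N_1$. This 2D-specific level-curve length bound is precisely what is unavailable on $\Tg^2$ (where one only has the coarser Lemma~\ref{lem_bs}(ii)) and is the sharp ingredient that yields the claimed $(\underline{N}_{01}/N_1)^{1/2}$ gain.

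Combining the two steps, $\sup_\zeta|A(\zeta)|\lec\min(L_0,L_1)\cdot(L_0+L_1)\underline{N}_{01}/N_1\lec L_0L_1\underline{N}_{01}/N_1$, which completes the estimate of $\|wu\|_{L^2_{t,x}}$. The bound for $\|\bar wu\|_{L^2_{t,x}}$ is identical: conjugation replaces $\tilde w(\zeta)$ by $\overline{\tilde w(-\zeta)}$ and so merely flips the sign choice in the wave dispersion relation, hence in $\Phi$, while the gradient lower bound and the level-curve length bound are insensitive to this sign.
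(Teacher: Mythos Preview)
The paper does not supply its own proof of this lemma; it is quoted verbatim from \cite{BHHT09}, Proposition~4.3~(ii), so there is nothing in the present paper to compare against. Your argument follows the standard bilinear Fourier-restriction recipe (Cauchy--Schwarz in the convolution variable, integrate out $\tau_1$, then bound the $\xi_1$-sublevel set of the resonance function) and is correct; this is essentially how \cite{BHHT09} argue as well.

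The only place where your write-up is slightly informal is the level-curve length bound $\mathcal H^1(\Phi^{-1}(c)\cap D)\lec \underline{N}_{01}$. The heuristic ``approximately concentric arcs around $\xi$, cut down to length $\lec N_0$ by the disk $|\xi_1|\le 2N_0$'' is correct but deserves one more line of justification. A clean way to make it rigorous is to bypass the coarea formula and slice instead: split $D$ according to which coordinate of $\xi-\xi_1$ has size $\gec N_1$, integrate that coordinate first using $|\partial_j\Phi|\gec N_1$ together with $\partial_j^2\Phi=2+O(N_0^{-1})$ (so at most two monotone pieces), and then integrate the remaining coordinate over an interval of length $\lec \underline{N}_{01}$ coming from whichever of the two annuli is smaller. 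This yields the volume bound $\lec (L_0+L_1)\underline{N}_{01}/N_1$ directly and avoids having to control the global geometry of the level curve.
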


The last estimate is one of the main consequences in \cite{BHHT09}.
\begin{lem}[\cite{BHHT09}, (5.11)]\label{lem_endpoint_R2}
For smooth functions $u,w$ on $\R \times \R^2$, we have
\eq{est_endpoint_R2}{\norm{\Sc{I}_S(uw)}{X^{0,\frac{1}{2},1}_S(\de )}\lec \de ^{\frac{1}{4}}\norm{u}{X^{0,\frac{1}{2},1}_S(\de )}\norm{w}{X^{-\frac{1}{2},\frac{1}{2},1}_{W_{\pm}}(\de )}.}
\end{lem}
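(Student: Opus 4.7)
The plan is to reduce \eqref{est_endpoint_R2} to a pure bilinear Bourgain-space estimate via the linear Duhamel bound \eqref{est_lin_duh}. Taking $b=\tfrac14$ in \eqref{est_lin_duh} extracts the factor $\de^{1/4}$ on the right-hand side, so the claim will follow once I establish the global-in-time estimate
\eqq{
\norm{uw}{X^{0,-1/4,1}_S}\lec \norm{u}{X^{0,1/2,1}_S}\norm{w}{X^{-1/2,1/2,1}_{W_\pm}}.
}
By duality this is equivalent to bounding the trilinear form $\iint_{\R\times\R^2}u\,w\,\bbar{v}\,dt\,dx$ by the product of the two right-hand norms and $\norm{v}{X^{0,1/4,\infty}_S}$. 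Decompose all three factors dyadically as $u=\sum_{N_1,L_1}P^S_{N_1,L_1}u$, $w=\sum_{N_0,L_0}P^{W_\pm}_{N_0,L_0}w$, $v=\sum_{N_2,L_2}P^S_{N_2,L_2}v$, reducing the task to uniform control of each dyadic trilinear integral with the appropriate weights, followed by summation over $N_j$ and $L_j$.

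The main analytic input is the refined $\R^2$ bilinear estimate of Lemma~\ref{lem_bs_R2}, which when applied to one of the two wave-Schr\"odinger pairs in the integrand produces the crucial gain $(\underline{N}/\overline{N})^{1/2}$ not available on $\Tg^2$; Cauchy-Schwarz against the remaining factor in $L^2_{t,x}$ then yields the desired dyadic bound. The modulation weights are governed by the Schr\"odinger-wave resonance identity
\eqq{
(\tau_2+|\xi_2|^2)-(\tau_1+|\xi_1|^2)-(\tau_0\pm|\xi_0|)=|\xi_0|^2+2\xi_0\cdot\xi_1\mp|\xi_0|,
}
valid on the support of the integrand, which forces $L_{\max}$ to be at least the size of the right-hand side. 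In the high-low regime $N_0\ll N_1\sim N_2$ the resonance cooperates with the $(N_0/N_1)^{1/2}$ factor of Lemma~\ref{lem_bs_R2} and the weight $N_0^{-1/2}$ coming from the $H^{-1/2}$-regularity of $w$ to produce convergent dyadic sums, using the $\ell^1_{L_0}\otimes\ell^1_{L_1}\otimes\ell^\infty_{L_2}$ structure built into the three Bourgain norms.

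The hardest part, as I expect it, is the high-to-high cases in which the wave frequency $N_0$ is comparable to or dominates the Schr\"odinger frequency $N_1$, since there the $N_0^{-1/2}$ weight on $w$ must be counterbalanced entirely through the spatial bilinear gain of Lemma~\ref{lem_bs_R2} rather than through modulation gain. Closing the argument in this regime requires careful choice of which pair among $(u,w)$, $(u,v)$, $(w,v)$ to bind by Cauchy-Schwarz, and full exploitation of the $\ell^1$-summability of the Schr\"odinger and wave norms in the modulation parameter; the value $s=-\tfrac12$ for $w$ is critical, in the sense that any rougher regularity on the wave data would break the dyadic summation, and this is precisely what makes Lemma~\ref{lem_endpoint_R2} an ``endpoint'' estimate in the literal sense.
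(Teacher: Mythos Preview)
The paper does not give its own proof of this lemma: it is quoted verbatim from \cite{BHHT09}, (5.11), and used as a black box. So there is no ``paper's proof'' to compare your argument against; what can be compared is your outline versus the argument in \cite{BHHT09} itself.

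Your reduction is sound: applying \eqref{est_lin_duh} with $b=\tfrac14$ correctly extracts the factor $\de^{1/4}$ and reduces the claim to the global product bound $\tnorm{uw}{X^{0,-1/4,1}_S}\lec \tnorm{u}{X^{0,1/2,1}_S}\tnorm{w}{X^{-1/2,1/2,1}_{W_\pm}}$, and your dyadic/duality framework is the same as that in \cite{BHHT09}. The low--high regime $N_0\ll N_1\sim N_2$ can indeed be handled by Lemma~\ref{lem_bs_R2} together with the resonance identity, essentially as you describe.

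Where your proposal has a genuine gap is the high--high--high regime $N_0\sim N_1\sim N_2$ with all modulations small (which the resonance identity does \emph{not} exclude, since $|\xi_1|^2-|\xi_2|^2\mp|\xi_0|$ can be $O(1)$ even at high frequency). In that regime Lemma~\ref{lem_bs_R2} gives no spatial gain, the $N_0^{-1/2}$ weight on $w$ is uncompensated, and ``careful choice of which pair to bind by Cauchy--Schwarz'' cannot close the estimate by itself: one is short by a full power $N_0^{1/2}$. This is exactly why \eqref{est_endpoint_R2} is an endpoint result. The proof in \cite{BHHT09} does not rely on Lemma~\ref{lem_bs_R2} alone; it uses the full battery of refined bilinear bounds in their Proposition~4.3 (including the transversality/null-form type estimates), which provide the extra gain precisely in this near-resonant high--high configuration. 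Your sketch correctly locates the difficulty but does not supply the missing ingredient; to complete the argument you would have to import those additional estimates from \cite{BHHT09}, not just the one quoted here as Lemma~\ref{lem_bs_R2}.
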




\bigskip
\section{Modified energy and resonant decomposition}

In this section we introduce our almost conservation quantity and prepare some basic lemmas in the $I$-method, treating $Z=\R^2$ and $Z=\Tg^2$ at once.

With $n_+:=n+i|\nabla |^{-1}\p _tn$ and $n_{+0}:=n_0+i|\nabla |^{-1}n_1$, \eqref{ZH} is transformed into
\begin{equation}\label{ZH2}
\left\{
\begin{array}{@{\,}r@{\;}l}
i\p _tu+ \Delta u&=\tfrac{1}{2}(n_++n_-)u,\qquad u:[-T,T]\times Z\to \Bo{C},\\
i\p _tn_+-|\nabla |n_+&=|\nabla |(|u|^2),\qquad n_+:[-T,T] \times Z\to \Bo{C} ,\\
(u,n_+)\big| _{t=0}&=(u_0,n_{+0})\in H^s\times H^r,
\end{array}
\right.
\end{equation}
where $n_-:=\bbar{n_+}$, which conserves (formally) the $L^2$ norm of $u(t)$ and
\eqq{H(u,n_+)(t):=\norm{\nabla u(t)}{L^2}^2+\ttfrac{1}{2}\norm{n_+(t)}{L^2}^2+\ttfrac{1}{2}\int _{Z}(n_+(t,x)+n_1(t,x))|u(t,x)|^2dx,}
although $H(u,n_+)$ cannot be in general defined for $(u(t),n_+(t))\in H^s\times H^r$ with $s<1$ or $r<0$.
We can recover \eqref{ZH} from \eqref{ZH2} by putting $n:=\Re n_+$ since $n$ is real valued.

For $s<1$, $r\le 0$, and $N\gg 1$, we define the operator $I^S_{s,N}$ for the Schr\"odinger equation and the operator $I^{W_+}_{r,N}$ for the reduced wave equation as
\eqq{I^S_{s,N}:=\F ^{-1}_\xi m_{1-s,N}(\xi )\F _x,\qquad I^{W_+}_{r,N}:=\F ^{-1}_\xi m_{-r,N}(\xi )\F _x}
with a radial function $m_{q,N}\in C^\I (\R^2)$ ($q\ge 0$), non-increasing in $|\xi |$, such that
\eqq{m_{q,N}=\begin{cases} ~1&\text{for}\hx |\xi |<N,\\
(N/|\xi |)^q&\text{for}\hx |\xi |>2N.\end{cases}}
Note that $I^S_{s,N}\in \Sc{B}(H^s,H^1)$, $I^{W_+}_{r,N}\in \Sc{B}(H^r,L^2)$, and $I^{W_+}_{0,N}$ is the identity operator.

Define the modified energy of $(u,n_+)$ by
\eqq{&H(I^S_{s,N}u,I^{W_+}_{r,N}n_+)(t)\\
&:=\norm{\nabla I^S_{s,N}u(t)}{L^2}^2+\ttfrac{1}{2}\norm{I^{W_+}_{r,N}n_+(t)}{L^2}^2+\ttfrac{1}{2}\int _{Z}I^{W_+}_{r,N}(n_+(t,x)+n_-(t,x))|I^S_{s,N}u(t,x)|^2dx.}
The operators $I^S_{s,N}$ and $I^{W_+}_{r,N}$ only act $u$ or $\bar{u}$ and $n_\pm$, respectively, so in what follows we abbreviate as 
\eqq{H(Iu,In_+)(t):=\norm{\nabla Iu(t)}{L^2}^2+\ttfrac{1}{2}\norm{In_+(t)}{L^2}^2+\ttfrac{1}{2}\int _{Z}I(n_+(t,x)+n_-(t,x))|Iu(t,x)|^2dx.}

For an integer $p\ge 2$, we write $\int _{\Sigma _p}$ to denote
\eqq{\int _{\Sigma _p}f(\xi _1,\dots ,\xi _p):=(2\pi )^{-(p-2)}\int _{\R^2}\!\!\cdots\!\! \int _{\R^2}f(\xi _1,\dots ,\xi _p)\de (\xi _1+\dots +\xi _p=0)\,d\xi _1\cdots d\xi _p}
for the case $Z=\R^2$ and 
\eqq{\int _{\Sigma _p}f(k_1,\dots ,k_p):=(2\pi )^{-(p-2)}\cdot \frac{1}{(\ga _1\ga _2)^{p-1}}\sum _{\mat{k_1,\dots ,k_p\in \Zg ^2\\k_1+\cdots +k_p=0}}f(k_1,\dots ,k_p)}
for the case $Z=\Tg^2$.
Also, we use the notations $\xi _{ij}:=\xi _i+\xi _j$, $m_{q,j}:=m_{q,N}(\xi _j)$.
Note that
\eqq{H(Iu,In_+)=&\int _{\Sigma _2}|\xi _1|^2m_{1-s,1}^2\hhat{u}(\xi _1)\hhat{\bar{u}}(\xi _2)+\frac{1}{2}\int _{\Sigma _2}m_{-r,1}^2\hhat{n}_+(\xi _1)\hhat{n}_-(\xi _2)\\
&+\frac{1}{2}\int _{\Sigma _3}m_{1-s,1}m_{1-s,2}m_{-r,3}\hhat{u}(\xi _1)\hhat{\bar{u}}(\xi _2)\big( \hhat{n}_++\hhat{n}_-\big) (\xi _3).}

If $\tnorm{u_0}{L^2}<\tnorm{Q}{L^2(\R^2)}$, then $\tnorm{Iu(t)}{L^2}\le \tnorm{u(t)}{L^2}=\tnorm{u_0}{L^2}<\tnorm{Q}{L^2(\R^2)}$ and we have
\eqq{\norm{Iu(t)}{H^1}^2+\norm{In_+(t)}{L^2}^2\sim \norm{Iu(t)}{L^2}^2+H(Iu,In_+)(t).}
Hence, we need an almost conservation law for the modified energy, as well as the local well-posedness with the existence time written in terms of $\tnorm{Iu_0}{H^1}+\tnorm{In_{+0}}{L^2}$.
For better decay of the increment of the modified energy, we introduce another quantity
\eqq{\ti{H}(u,n_+):=&\int _{\Sigma _2}|\xi _1|^2m_{1-s,1}^2\hhat{u}(\xi _1)\hhat{\bar{u}}(\xi _2)+\frac{1}{2}\int _{\Sigma _2}m_{-r,1}^2\hhat{n}_+(\xi _1)\hhat{n}_-(\xi _2)\\
&+\frac{1}{2}\int _{\Sigma _3}\hhat{u}(\xi _1)\hhat{\bar{u}}(\xi _2)\big( \sgm _+(\xi _1,\xi _2)\hhat{n}_+(\xi _3)+\sgm _-(\xi _1,\xi _2)\hhat{n}_-(\xi _3)\big) ,}
where the multipliers $\sgm _{\pm}$ will be defined soon.
A direct calculation using the equation shows that
\eqq{&\frac{d}{dt}\ti{H}(u,n_+)\\
=&\frac{i}{2}\int _{\Sigma _3}\Big( |\xi _1|^2m_{1-s,1}^2-|\xi _2|^2m_{1-s,2}^2+|\xi _3|m_{-r,3}^2-\big( |\xi _1|^2-|\xi _2|^2+|\xi _3|\big) \sgm _+(\xi _1,\xi _2)\Big) \hhat{u}(\xi _1)\hhat{\bar{u}}(\xi _2)\hhat{n}_+(\xi _3)\\
+&\frac{i}{2}\int _{\Sigma _3}\Big( |\xi _1|^2m_{1-s,1}^2-|\xi _2|^2m_{1-s,2}^2-|\xi _3|m_{-r,3}^2-\big( |\xi _1|^2-|\xi _2|^2-|\xi _3|\big) \sgm _-(\xi _1,\xi _2)\Big) \hhat{u}(\xi _1)\hhat{\bar{u}}(\xi _2)\hhat{n}_-(\xi _3)\\
-&\frac{i}{4}\int _{\Sigma _4}\hhat{u}(\xi _1)\hhat{\bar{u}}(\xi _2)\big( \hhat{n}_++\hhat{n}_-\big) (\xi _3)\\
&\quad \times \Big( \big( \sgm _+(\xi _{13},\xi _2)-\sgm _+(\xi _1,\xi _{23})\big) \hhat{n}_+(\xi _4)+\big( \sgm _-(\xi _{13},\xi _2)-\sgm _-(\xi _1,\xi _{23})\big) \hhat{n}_-(\xi _4)\Big) \\
-&\frac{i}{2}\int _{\Sigma _4}|\xi _{12}|\big( \sgm _+-\sgm _-\big) (\xi _1,\xi _2)\hhat{u}(\xi _1)\hhat{\bar{u}}(\xi _2)\hhat{u}(\xi _3)\hhat{\bar{u}}(\xi _4).}

An initial guess for $\sgm _\pm$ would be
\eq{def_sigma_z}{\sgm _\pm (\xi _1,\xi _2)=\sgm ^Z_\pm (\xi _1,\xi _2):=\frac{|\xi _1|^2m_{1-s,1}^2-|\xi _2|^2m_{1-s,2}^2\pm |\xi _{12}|m_{-r,12}^2}{|\xi _1|^2-|\xi _2|^2\pm |\xi _{12}|},}
which kills all the trilinear terms.
Under this definition, however, $\sgm _\pm$ have singularities and we will fail to estimate the quartilinear terms.
Here arises an essential difficulty in applying the $I$-method to the Zakharov system.

In \cite{FPZ,P12}, they used
\eq{def_sigma_s}{\sgm _+(\xi _1,\xi _2)=\sgm _-(\xi _1,\xi _2)=\sgm ^S(\xi _1,\xi _2):=\frac{|\xi _1|^2m_{1-s,1}^2-|\xi _2|^2m_{1-s,2}^2}{|\xi _1|^2-|\xi _2|^2}}
so that the worst terms including two derivatives would be cancelled with $\sgm _\pm$ in the trilinear terms.
It is easy to check that $\sgm ^S$ is bounded.
However, the remaining trilinear terms are still much more massive than the quartilinear terms.
In fact, it was exactly these terms that determined the regularity threshold for global well-posedness, both in \cite{FPZ} ($s>\frac{3}{4}$) and in \cite{P12} ($s>\frac{2}{3}$).

We will use both \eqref{def_sigma_z} and \eqref{def_sigma_s} to obtain a slightly better estimate.
It turns out that the biggest contribution in the remaining trilinear terms comes from the frequency region for high-low interactions ($|\xi _1|\not\sim |\xi _2|$), which has no intersection with the region $\big| |\xi _1|^2-|\xi _2|^2\big|\sim |\xi _{12}|$, where $\sgm ^Z_\pm$ become unbounded.
Motivated by this fact, we shall employ the following definition.
\eq{def_sigma}{\sgm _\pm (\xi _1,\xi _2):=\begin{cases}
\sgm ^Z_\pm (\xi _1,\xi _2) &\text{if}\quad \big| |\xi _1|^2-|\xi _2|^2\big| >2|\xi _{12}|,\\
\sgm ^S(\xi _1,\xi _2) &\text{if}\quad \big| |\xi _1|^2-|\xi _2|^2\big| \le 2|\xi _{12}|.\end{cases}}
The above definition can be regarded as a variant of `resonant decomposition' introduced in \cite{CKSTT08} in the context of two-dimensional cubic NLS, since we consider resonant and non-resonant frequencies separately to prevent the multiplier from becoming singular.
Observe that $\sgm _\pm (\xi _1,\xi _2)=\sgm _\pm (-\xi _1,-\xi _2)=\sgm _\mp (\xi _2,\xi _1)$, and that $\sgm _\pm (\xi _1,\xi _2)\equiv 1$ when $\max \shugo{|\xi _1|,|\xi _2|}\le N/2$.
Moreover, we can easily show the following lemma.
In particular, $\sgm _\pm (\xi _1,\xi _2)$ are bounded.
\begin{lem}\label{lem_bound_sigma}
The multipliers $\sgm _\pm (\xi _1,\xi _2)$ given by \eqref{def_sigma} obey the following estimates.
\begin{enumerate}
\item If $|\xi _1|\gg |\xi _2|$, then $|\sgm _\pm (\xi _1,\xi _2)-m_{1-s,1}^2|\lec \frac{|\xi _2|^2}{|\xi _1|^2}+\frac{1}{|\xi _1|}$.
\item If $|\xi _1|\sim |\xi _2|$, then $|\sgm _\pm (\xi _1,\xi _2)|\lec 1$.
\end{enumerate}
\end{lem}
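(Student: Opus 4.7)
My plan is to split the proof along the piecewise definition \eqref{def_sigma} of $\sgm _\pm$, and first to dispose of the subcase $\max \shugo{|\xi _1|,|\xi _2|}\le N/2$, in which $\sgm _\pm \equiv m_{1-s,1}\equiv 1$ so that both statements are trivial. Throughout the remaining analysis I will rely on two elementary properties of the cutoff, both immediate from the defining formula: $0<m_{q,N}\le 1$, and the symbol-type bound $|rm_{q,N}'(r)|\lec m_{q,N}(r)$ uniformly in $r\ge 0$ (identifying $m_{q,N}$ with its radial profile). The latter is trivial on the intervals $[0,N]$ and $[2N,\I )$ where $m_{q,N}$ is constant or a pure power, and routine on the transition interval $[N,2N]$ where $r\sim N$.

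For (i), assume $|\xi _1|\gg |\xi _2|$; the nontrivial subcase is $|\xi _1|>N/2$, so $|\xi _1|\gg 1$. Then $\big| |\xi _1|^2-|\xi _2|^2\big| \sim |\xi _1|^2\gg |\xi _1|\sim |\xi _{12}|$, placing us on the $\sgm ^Z_\pm$ branch of \eqref{def_sigma}. The plan is to rewrite
\begin{equation*}
\sgm ^Z_\pm (\xi _1,\xi _2)-m_{1-s,1}^2=\frac{|\xi _2|^2(m_{1-s,1}^2-m_{1-s,2}^2)\pm |\xi _{12}|(m_{-r,12}^2-m_{1-s,1}^2)}{|\xi _1|^2-|\xi _2|^2\pm |\xi _{12}|},
\end{equation*}
observe that the denominator is $\sim |\xi _1|^2$, and bound each factor $m_{q,\cdot}^2$ in the numerator by $1$ to obtain a numerator bounded by $|\xi _2|^2+|\xi _1|$. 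Dividing gives the claim.

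For (ii), assume $|\xi _1|\sim |\xi _2|$ and handle the two branches separately. On the $\sgm ^S$ branch, I will apply the mean value theorem to the radial function $g(r):=r^2m_{1-s,N}(r)^2$ to write $\sgm ^S=g'(\eta )/(|\xi _1|+|\xi _2|)$ for some $\eta$ between $|\xi _1|$ and $|\xi _2|$; the product rule together with the symbol-type derivative bound above gives $|g'(r)|\lec r$, and since $\eta \sim |\xi _1|+|\xi _2|$, we conclude $|\sgm ^S|\lec 1$. On the $\sgm ^Z_\pm$ branch, the hypothesis $\big||\xi _1|^2-|\xi _2|^2\big| >2|\xi _{12}|$ makes the denominator comparable in absolute value to $|\xi _1|^2-|\xi _2|^2$, so that from the identity
\begin{equation*}
\sgm ^Z_\pm (\xi _1,\xi _2)=\frac{(|\xi _1|^2-|\xi _2|^2)\sgm ^S\pm |\xi _{12}|m_{-r,12}^2}{|\xi _1|^2-|\xi _2|^2\pm |\xi _{12}|}
\end{equation*}
and the inequality $|\xi _{12}|/\big||\xi _1|^2-|\xi _2|^2\big| <1/2$, the bound $|\sgm ^Z_\pm |\lec 1$ reduces to the bound just obtained for $\sgm ^S$.

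The only step requiring more than bookkeeping is the symbol-type derivative estimate on $m_{q,N}$ used in the mean value argument for $\sgm ^S$; this is the single place where the specific construction of the cutoff enters, and once it is verified, the remainder of the proof is routine algebra.
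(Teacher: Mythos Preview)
Your proof is correct. The paper does not give a proof of this lemma, only remarking that it ``can be easily shown''; your argument supplies the details in the natural way, and the mean value theorem computation for the $\sgm ^S$ branch is exactly what is needed.
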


We next show that the new quantity $\ti{H}(u,n_+)$, which is our almost conserved quantity, is always close to the (first generation) modified energy $H(Iu,In_+)$.

\begin{prop}[Fixed-time difference]\label{prop_fixedtime}
Let $1>s>\frac{1}{2}$, $0\ge r>-\frac{1}{2}$.
Suppose that $r>1-2s$.
Then, for any $t\in \R$, we have
\eqq{\big| H(Iu,In_+)(t)-\ti{H}(u,n_+)(t)\big| \lec N^{-1+}\norm{Iu(t)}{H^1}^2\norm{In_+(t)}{L^2}.}
\end{prop}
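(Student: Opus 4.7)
The first step is to observe that the difference is purely trilinear; a direct comparison of the definitions of $H(Iu,In_+)$ and $\ti{H}(u,n_+)$ gives
\eqq{
H(Iu,In_+)(t)-\ti{H}(u,n_+)(t)=\frac{1}{2}\int _{\Sigma _3}\hhat{u}(\xi _1)\hhat{\bbar{u}}(\xi _2)\big(M_+\hhat{n}_+(\xi _3)+M_-\hhat{n}_-(\xi _3)\big),
}
where $M_\pm (\xi _1,\xi _2):=m_{1-s,1}m_{1-s,2}m_{-r,3}-\sgm _\pm (\xi _1,\xi _2)$ and $\xi _3=-\xi _{12}$. The key observation is that $M_\pm \equiv 0$ on $\shugo{\max (|\xi _1|,|\xi _2|)\le N/2}$: there $|\xi _3|\le N$, so all $m$-factors equal $1$, and $\sgm _\pm \equiv 1$ by construction. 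On the complementary region, Lemma~\ref{lem_bound_sigma} combined with $m_{q,j}\le 1$ yields the uniform pointwise bound $|M_\pm |\lec 1$.

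Next I would Littlewood-Paley decompose $|\xi _j|\sim N_j$, and, using the symmetry $(\xi _1,\xi _2)\mapsto (\xi _2,\xi _1)$ (which swaps $\sgm _\pm$), assume $N_1\ge N_2$. Two regimes remain: (A) $N_1\sim N_2\gec N$ with $N_3\lec N_1$; (B) $N_1\gg N_2$ with $N_3\sim N_1\gec N$. On each dyadic piece I would apply the bilinear multiplier estimate
\eqq{
\Big|\int _{\Sigma _3}M_\pm \,\hhat{(P_{N_1}u)}\,\hhat{(P_{N_2}\bbar{u})}\,\hhat{(P_{N_3}n_\pm )}\Big| \lec \underline{N}_{123}\norm{P_{N_1}u}{L^2}\norm{P_{N_2}u}{L^2}\norm{P_{N_3}n_\pm}{L^2},
}
which follows by writing the form as $\int _Z T_{M_\pm }(P_{N_1}u,P_{N_2}\bbar{u})\cdot P_{N_3}n_{\pm}\,dx$, using the Fourier-side domination $|\F (T_{M_\pm}(f,g))(\eta )|\le \norm{M_\pm}{L^\I}(|\hhat{f}|*|\hhat{g}|)(\eta )$ together with Plancherel, and invoking Bernstein's inequality in $2$D (applied in whichever of the three available pairings places the lowest-frequency component in $L^\I$).

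The remaining step is bookkeeping. Dividing the dyadic bound by the Fourier-side weights $|\xi _j|m_{1-s,j}$ (for $j=1,2$) and $m_{-r,3}$ that appear in $\norm{Iu}{H^1}^2\norm{In_+}{L^2}$, and then summing via Cauchy-Schwarz in the dyadic parameters, yields the claimed $N^{-1+}\norm{Iu}{H^1}^2\norm{In_+}{L^2}$. The hypothesis $r>1-2s$ (equivalently $2s+r>1$) enters precisely at this point: it forces the critical sum $\sum _{N_1\gec N}N_1^{1-2s-r}$ to be dominated by $N^{1-2s-r}$, which together with the remaining $N$-weighted normalization factor produces the $N^{-1}$ decay; the assumptions $s>\tfrac{1}{2}$ and $r>-\tfrac{1}{2}$ guarantee that the sums over the low-frequency index (with weights $N_2$ or $N_3^{1+r}$) converge with at worst a logarithmic loss, which accounts for the `$+$' in the exponent. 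The main obstacle is case (A), where $|M_\pm|$ provides no intrinsic smallness: the full $N^{-1+}$ must be extracted from the balance of normalization weights and the Bernstein-derived $\underline{N}_{123}$ factor, and this balance is tight exactly at $2s+r=1$, which is why that inequality emerges naturally as the threshold.
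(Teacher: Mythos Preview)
Your proof is correct and follows essentially the same route as the paper's: both recognize that the difference is a trilinear form with a bounded multiplier vanishing on $\{\max(|\xi_1|,|\xi_2|)\le N/2\}$, perform a Littlewood--Paley decomposition, estimate each block by H\"older/Bernstein (placing one factor in $L^\infty$), and sum dyadically under the constraint $2(1-s)-r<1$. The only cosmetic difference is that the paper always places $P_{N_2}Iu$ in $L^\infty$ (yielding the factor $N_1^{-1}$ directly after passing to $H^1$ norms), whereas you place whichever factor has the lowest frequency in $L^\infty$ (yielding $\underline{N}_{123}$); the two choices produce the same worst-case prefactor $(N_1/N)^{2(1-s)-r}N_1^{-1}$. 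One small inaccuracy in your write-up: the summability of the low-frequency index does not actually require $s>\tfrac12$ and $r>-\tfrac12$ separately --- the single hypothesis $r>1-2s$ already controls the full dyadic sum (the paper uses only this), so that part of your explanation overstates the role of the individual bounds.
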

\begin{proof}
From the definition and boundedness of multipliers, we have
\eqq{&\big| H(Iu,In_+)(t)-\ti{H}(u,n_+)(t)\big| \\
&\le \frac{1}{2}\int _{\Sigma _3}|\hhat{u}(\xi _1)||\hhat{\bar{u}}(\xi _2)|\Big| \big( m_{1-s,1}m_{1-s,2}m_{-r,3}-\sgm _+(\xi _1,\xi _2)\big) \hhat{n}_+(\xi _3)\\
&\hspace{120pt} +\big( m_{1-s,1}m_{1-s,2}m_{-r,3}-\sgm _-(\xi _1,\xi _2)\big) \hhat{n}_-(\xi _3)\Big| \\
&\le \frac{1}{2}\int _{\Sigma _3}\chf{\shugo{|\xi _1|>N/2~\text{or}~|\xi _2|>N/2}}(\xi _1,\xi _2)|\hhat{u}(\xi _1)||\hhat{\bar{u}}(\xi _2)|\big( |\hhat{n}_+(\xi _3)|+|\hhat{n}_-(\xi _3)|\big) .}
We may assume that all of $\hhat{u}$, $\hhat{\bar{u}}$, $\hhat{n}_\pm$ are real-valued and non-negative.
Symmetry allows us to assume $|\xi _1|\ge |\xi _2|$.
Also, it suffices to consider the case of $n_+$.
Then the above is bounded by
\eqq{&\sum _{N_1\gec N}\sum _{N_2\le N_1}\sum _{N_0\lec N_1}(\frac{N_1}{N})^{1-s}\Big( (\frac{N_2}{N})^{1-s}+1\Big) \Big( (\frac{N_0}{N})^{-r}+1\Big) \norm{P_{N_1}Iu}{L^2}\norm{P_{N_2}Iu}{L^{\I}}\norm{P_{N_0}In_+}{L^2}\\
&\lec \sum _{N_1\gec N}\sum _{N_2\le N_1}\sum _{N_0\lec N_1}(\frac{N_1}{N})^{1-s}\Big( (\frac{N_2}{N})^{1-s}+1\Big) \Big( (\frac{N_0}{N})^{-r}+1\Big) \frac{1}{N_1}\norm{P_{N_1}Iu}{H^1}\norm{P_{N_2}Iu}{H^1}\norm{P_{N_0}In_+}{L^2}.}
Since $2(1-s)-r<1$, the prefactor is exceeded by $N^{-1+}N_1^{0-}$.
Applying the Cauchy-Schwarz inequality to each summation we reach the claim.
\end{proof}


\bigskip
\section{Global solutions for the periodic case}

In this section we consider the periodic case and prove Theorem~\ref{thm_global}.
Since we always assume the wave data to be in $L^2$, the operator $I$ is operated only to the Schr\"odinger equation, so we use the notation $m(k)$ to denote $m_{1-s,N}(k)$ for simplicity.

\medskip
\subsection{Almost conservation law}

\begin{prop}[Almost conservation law]\label{prop_ac}
Let $1>s>\frac{1}{2}$, $r=0$, $0<\de \le 1$, and let $(u,n_+)$ be a smooth solution to \eqref{ZH2} on $(t,x)\in [0,\de ]\times \Tg^2$.
Then, we have
\eqq{&|\ti{H}(u,n_+)(\de )-\ti{H}(u,n_+)(0)|\lec N^{-1+}\de ^{\frac{1}{2}-}\norm{Iu}{X^{1,\frac{1}{2},1}_S(\de )}^2\norm{n_+}{X^{0,\frac{1}{2},1}_{W_+}(\de )}\\
&\hx +(N^{-2+}+N^{-\frac{5}{4}+}\de ^{\frac{1}{4}-}+N^{-1+}\de ^{1-})\big( \norm{Iu}{X^{1,\frac{1}{2},1}_S(\de )}^2\norm{n_+}{X^{0,\frac{1}{2},1}_{W_+}(\de )}^2+\norm{Iu}{X^{1,\frac{1}{2},1}_S(\de )}^4\big) .}
\end{prop}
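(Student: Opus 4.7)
\medskip
\noindent\textbf{Proof plan.}
The strategy is to integrate the time derivative of $\ti H(u,n_+)$, computed just before the proposition, over $[0,\de]$ and bound each of the resulting (trilinear and quartilinear) multilinear forms via the $X^{s,b,p}$-type estimates collected in Section~2. After extending $u,n_\pm$ to the full line in $t$ by the restriction norm and inserting a cutoff $\psi_\de$, we can pass to spacetime Fourier variables and reduce matters to estimating space-time integrals
\eqq{\int_{\ze_1+\ze_2+\ze_3=0}\!\! M_3(\xi_1,\xi_2,\xi_3)\ti u(\ze_1)\ti{\bar u}(\ze_2)\ti n_\pm(\ze_3),\quad \int_{\ze_1+\cdots+\ze_4=0}\!\! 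M_4\,\ti u\,\ti{\bar u}\,\ti n_\pm\,\ti n_\pm,\quad\text{etc.,}}
where $M_3$ and $M_4$ are the symbols obtained in the computation of $\tfrac{d}{dt}\ti H$.

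The first step is the trilinear estimate. By the very definition \eqref{def_sigma} of $\sgm_\pm$, the symbol $M_3^\pm(\xi_1,\xi_2,\xi_3)$ vanishes on the non-resonant set $\big||\xi_1|^2-|\xi_2|^2\big|>2|\xi_{12}|$, so the trilinear contribution is supported on $\big||\xi_1|^2-|\xi_2|^2\big|\le 2|\xi_{12}|=2|\xi_3|$, where $\sgm_\pm=\sgm^S$. On this set $M_3^\pm$ collapses to $\pm|\xi_3|(m_{-r,3}^2-\sgm^S)$, which (after a Littlewood-Paley split and use of Lemma~\ref{lem_bound_sigma}) gains a factor $(\bbar N_{12}/N)^{1-s}$ whenever $\bbar N_{12}\gec N$ (and vanishes otherwise). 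Decomposing dyadically in frequency and modulation, I would apply Lemma~\ref{prop_te_highlow}, Lemma~\ref{prop_te_highlow2}, Lemma~\ref{prop_te_highhigh-m}, Lemma~\ref{prop_te_highhigh-l_2d}, and Lemma~\ref{cor_te_low} together with Lemma~\ref{lem_linear} to obtain the $X^{1,1/2,1}_S\times X^{1,1/2,1}_S\times X^{0,1/2,1}_{W_+}$ bound; the worst high-low case (say $N_1\gg N_2\sim N_3$) gains $N_1^{-1}\underline N_{12}^{1/2}\bbar N_{12}^{-1}(\bbar N_{12}/N)^{1-s}\lec N^{-1}\bbar N_{12}^{0-}$ thanks to $s>1/2$, and a time-factor $\de^{1/2-}$ comes from \eqref{est_stability}, yielding the first term of the right-hand side.

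The second step is the quartilinear estimates. For the $uu n_+ n_+$ type, the multiplier $\sgm_\pm(\xi_{13},\xi_2)-\sgm_\pm(\xi_1,\xi_{23})$ is a difference of bounded multipliers that is localized to the regions where some frequency exceeds $N/2$; using Lemma~\ref{lem_bound_sigma} one gains a factor $\bbar N^{-1}(\bbar N/N)^{1-s}$ whenever the high Schr\"odinger frequency satisfies $\bbar N\gec N$. I would then split the integral $\int=\int(\psi_\de u)(\psi_\de\bar u)(\psi_\de n_+)(\psi_\de n_+)$ into two bilinear pieces along the pairing $(u,n_+)(\bar u,n_+)$ and use Lemma~\ref{lem_bs}(ii) (bilinear Schr\"odinger--wave) twice, or alternatively apply Lemma~\ref{lem_BEforwave} to put each wave factor into $L^4_{t,x}$ and use the periodic $L^4$ Strichartz loss for $u$. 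The interplay of these two routes is what produces the three separate contributions $N^{-2+}$, $N^{-5/4+}\de^{1/4-}$, and $N^{-1+}\de^{1-}$ in the final bound: the first from a purely bilinear $L^2$-based argument (no time gain), the second from inserting one factor of Lemma~\ref{lem_BEforwave} for the wave (half a derivative loss compensated by the extra $(\bbar N/N)^{-1}$), and the third from a crude H\"older/Sobolev estimate used when the modulation structure is unfavorable. The pure $uuuu$ term is similar but easier, since $|\xi_{12}|(\sgm_+-\sgm_-)$ vanishes on the resonant set and is uniformly bounded, giving a $N^{-2+}$ contribution by two applications of Lemma~\ref{prop_te_highhigh-l_2d}/Lemma~\ref{prop_te_highlow}.

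The main obstacle I anticipate is the term $N^{-5/4+}\de^{1/4-}$: achieving this gain requires a genuinely refined case analysis of the quartilinear symbol, in which one must identify that the worst frequency configuration forces $\sgm_\pm(\xi_{13},\xi_2)-\sgm_\pm(\xi_1,\xi_{23})$ to be \emph{smooth} across the resonant interface (so that the non-resonant formula $\sgm^Z_\pm$ cancels against $\sgm^S$ up to acceptable order), and then exploit the new Strichartz estimate (Lemma~\ref{lem_BEforwave}) for the wave factor to pick up a power of $\de$ while paying only $N^{3/8}\bbar L^{3/8}$ instead of a full derivative. Once that delicate case is in place, summing the dyadic pieces against $\ell^2_N\ell^1_L$ norms is routine via the Cauchy--Schwarz inequality, and the proposition follows.
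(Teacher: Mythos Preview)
There are two concrete gaps in your plan.

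\textbf{Trilinear term.} You say the worst case is the high--low interaction ``$N_1\gg N_2\sim N_3$''. But the trilinear integrand is supported on $\big||k_1|^2-|k_2|^2\big|\le 2|k_{12}|$, which forces $\big||k_1|-|k_2|\big|\le 2$; hence the only surviving configuration is $N_1\sim N_2\gec N$ with $N_0\lec N_1$ arbitrary. There is no high--low case here at all, and your factor $\underline N_{12}^{1/2}\bbar N_{12}^{-1}$ does not match what the trilinear lemmas give in the high--high regime. The paper simply bounds the multiplier by~$1$ on this set and applies Lemmas~\ref{prop_te_highlow}, \ref{cor_te_low}--\ref{prop_te_highhigh-l_2d} (using also that either $L_{\max}\lec N_0$ or $L_{\max}\sim L_{\med}$), which produces the $N^{-1+}\de^{1/2-}$ bound.

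\textbf{Quartilinear $u\bar u n_\pm n_\pm$ term.} This is the more serious gap. You propose to bound $\sgm_\pm(k_{13},k_2)-\sgm_\pm(k_1,k_{23})$ directly via Lemma~\ref{lem_bound_sigma}, claiming a gain of $\bbar N^{-1}$. But Lemma~\ref{lem_bound_sigma} controls $\sgm_\pm(\xi_1,\xi_2)-m_{1-s,1}^2$, not differences of $\sgm_\pm$ at two separate points; the raw difference above is generically only $O(1)$ (for instance when $|k_1|\sim |k_2|\gec N$ and $|k_3|,|k_4|\ll N$, neither term is close to a common quantity). The paper's key maneuver, which you are missing, is to first add the identically-zero expression $\tfrac{i}{4}\int\!\!\int (m_{13}^2-m_{23}^2)\hhat u\hhat{\bar u}(\hhat n_++\hhat n_-)(\hhat n_++\hhat n_-)$, which rewrites the multiplier as a sum of terms of the form $\sgm_\pm(k_{13},k_2)-m_{13}^2$. \emph{Now} Lemma~\ref{lem_bound_sigma}(i) applies directly and yields the crucial decay $\tfrac{|k_2|^2}{|k_{13}|^2}+\tfrac{1}{|k_{13}|}$ when $|k_{13}|\gg |k_2|$. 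After this symmetrization the case analysis (Cases~1--2 according to $N_2\gec N_4$ or $N_2\ll N_4$, each split into $N_1\gec N$ or $N_1\ll N$) produces the three decays $N^{-2+}$, $N^{-5/4+}\de^{1/4-}$, $N^{-1+}\de^{1-}$; the middle one indeed comes from Lemma~\ref{lem_BEforwave} in the subcase where $N_1\ll N$, $N_2\gec N$ and the maximal modulation $L_{\max}\gec N_2^2$ falls on a Schr\"odinger factor. Your description of the mechanism for $N^{-5/4+}\de^{1/4-}$ (cancellation of $\sgm^Z_\pm$ against $\sgm^S$ across the interface) is not what actually happens.

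For the $u\bar u u\bar u$ term, note that the paper does \emph{not} claim $|k_{12}|(\sgm_+-\sgm_-)$ is uniformly bounded; it only bounds $\sgm_+-\sgm_-$ by~$1$ and carries the factor $|k_{12}|\lec N_1+N_2$, which then yields $N^{-1+}\de^{1-}$ (Case~1) and $N^{-2+}+N^{-3/2+}\de^{1/2-}$ (Case~2), not a clean $N^{-2+}$.
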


\begin{proof}
From the definition,
\begin{align}
&\ti{H}(u,n_+)(\de )-\ti{H}(u,n_+)(0)=\int _0^\de \frac{d}{dt}\ti{H}(u,n_+)(t)\,dt\notag \\
&=\frac{i}{2}\int _0^\de \int _{\Sigma _3}\chf{\shugo{||k_1|^2-|k_2|^2|\le 2|k_{12}|}}(k_1,k_2)|k_{12}|\hhat{u}(t,k_1)\hhat{\bar{u}}(t,k_2) \label{term_ac_tri}\\
&\quad \times \Big( \big( 1-\sgm _+(k_1,k_2)\big) \hhat{n}_+(t,k_3)-\big( 1-\sgm _-(k_1,k_2)\big) \hhat{n}_-(t,k_3)\Big) \,dt\notag \\
&\hx -\frac{i}{4}\int _0^\de \int _{\Sigma _4}\hhat{u}(t,k_1)\hhat{\bar{u}}(t,k_2)\big( \hhat{n}_++\hhat{n}_-\big) (t,k_3)\label{term_ac_quar1}\\
&\quad \times \Big( \big( \sgm _+(k_{13},k_2)-\sgm _+(k_1,k_{23})\big) \hhat{n}_+(t,k_4)+\big( \sgm _-(k_{13},k_2)-\sgm _-(k_1,k_{23})\big) \hhat{n}_-(t,k_4)\Big) \,dt\notag \\
&\hx -\frac{i}{2}\int _0^\de \int _{\Sigma _4}|k_{12}|\big( \sgm _+-\sgm _-\big) (k_1,k_2)\hhat{u}(t,k_1)\hhat{\bar{u}}(t,k_2)\hhat{u}(t,k_3)\hhat{\bar{u}}(t,k_4)\,dt.\label{term_ac_quar2}
\end{align}

\textbf{\underline{Estimate of \eqref{term_ac_tri}}}.
We may assume $\max \shugo{|k_1|,|k_2|}>N$; otherwise $\eqref{term_ac_tri}=0$.
Note that $\big| |k_1|^2-|k_2|^2\big| \le 2|k_{12}|$ implies $\big| |k_1|-|k_2|\big| \le 2$.
Therefore, we may assume $|k_1|\sim |k_2|\gec N$.
We shall see only the first term in \eqref{term_ac_tri}, since the second one is exactly the complex conjugate of the first one.
Thus, we need to estimate
\eqq{&\Big| \int _\R \int _{\Sigma _3}\chf{\shugo{||k_1|^2-|k_2|^2|\le 2|k_{12}|}}(k_1,k_2)|k_{12}|\psi _\de \hhat{u}(t,k_1)\psi _\de \hhat{\bar{u}}(t,k_2)\big( 1-\sgm _+(k_1,k_2)\big) \chi _\de \hhat{n}_+(t,k_3)\,dt\Big| \\
&\lec \int _{\ze _0=\ze _1-\ze _2}\chf{\shugo{||k_1|^2-|k_2|^2-|k_0||\lec |k_0|}}|k_0||\ti{\psi _\de u}(\ze _1)\ti{\psi _\de u}(\ze _2)\ti{\chi _\de n_+}(\ze _0)|\\
&\le \sum _{N_1\sim N_2\gec N}\sum _{N_0\lec N_1}\sum _{L_0,L_1,L_2}N_0\int _{\ze _0=\ze _1-\ze _2}\Big| \big[ \ti{P^S_{N_1,L_1}\psi _\de u}\big] (\ze _1)\big[ \ti{P^S_{N_2,L_2}\psi _\de u}\big] (\ze _2)\big[ \ti{P^{W_+}_{N_0,L_0}\chi _\de n_+}\big] (\ze _0)\Big| ,}
where $\chi _\de :=\chf{[0,\de ]}$.
We remark that in the above summation, since $\big| |k_1|^2-|k_2|^2-|k_0|\big| \lec |k_0|$, either $L_{\max}\lec N_0$ or $L_{\max}\sim L_{\med}$ holds.
Then, from Lemmas~\ref{prop_te_highlow}, \ref{cor_te_low}--\ref{prop_te_highhigh-l_2d}, this is bounded by
\begin{align}
&\sum _{N_1\sim N_2\gec N}\sum _{N_0\lec N_1}\sum _{L_0,L_1,L_2}N_0(L_{\max}L_{\med})^{\frac{3}{8}+}L_{\min}^{\frac{1}{4}}\norm{P^S_{N_1,L_1}\psi _\de u}{L^2_{t,x}}\norm{P^S_{N_2,L_2}\psi _\de u}{L^2_{t,x}}\norm{P^{W_+}_{N_0,L_0}\chi _\de n_+}{L^2_{t,x}}\notag \\
&\lec \sum _{N_1\sim N_2\gec N}N_1\norm{P_{N_1}\psi _\de u}{X^{0,\frac{3}{8}+,1}_S}\norm{P_{N_2}\psi _\de u}{X^{0,\frac{3}{8}+,1}_S}\norm{\chi _\de n_+}{X^{0,\frac{1}{4},1}_{W_+}}+~\text{similar terms}\notag \\
&\lec \sum _{N_1\sim N_2\gec N}\frac{1}{N_1}(\frac{N_1}{N})^{2(1-s)}\norm{P_{N_1}\psi _\de Iu}{X^{1,\frac{3}{8}+,1}_S}\norm{P_{N_2}\psi _\de Iu}{X^{1,\frac{3}{8}+,1}_S}\norm{\chi _\de n_+}{X^{0,\frac{1}{4},1}_{W_+}}+~\text{similar terms}\label{term_1}\\
&\lec N^{-1}\norm{\psi _\de Iu}{X^{1,\frac{3}{8}+,1}_S}\norm{\psi _\de Iu}{X^{1,\frac{3}{8}+,1}_S}\norm{\chi _\de n_+}{X^{0,\frac{1}{4},1}_{W_+}}+~\text{similar terms}\notag \\
&\lec N^{-1}\de ^{\frac{1}{2}-}\norm{Iu}{X^{1,\frac{1}{2},1}_S}^2\norm{n_+}{X^{0,\frac{1}{2},1}_{W_+}}.\notag
\end{align}
In the last inequality we have used \eqref{est_stability} and
\eq{est_stability_chi}{\norm{\chi _\de n}{X^{s,b,1}}\lec \de ^{\frac{1}{2}-b}\norm{n}{X^{s,\frac{1}{2},1}},\quad 0<b<\frac{1}{2},}
which can be verified similarly to \eqref{est_stability}.

\textbf{\underline{Estimate of \eqref{term_ac_quar1}}}.
Motivated by the argument in \cite{FPZ}, we add
\eqq{\frac{i}{4}\int _0^\de \int _{\Sigma _4}\hhat{u}(t,k_1)\hhat{\bar{u}}(t,k_2)\big( \hhat{n}_++\hhat{n}_-\big) (t,k_3)\big( \hhat{n}_++\hhat{n}_-\big) (t,k_4)\cdot \big( m_{13}^2-m_{23}^2\big) \,dt=0}
to \eqref{term_ac_quar1} and consider the estimate of
\eqs{\frac{i}{4}\int _0^\de \int _{\Sigma _4}\hhat{u}(t,k_1)\hhat{\bar{u}}(t,k_2)\big( \hhat{n}_++\hhat{n}_-\big) (t,k_3)\big( \sgm _+(k_{13},k_2)-m_{13}^2-\sgm _-(k_{23},k_1)+m_{23}^2\big) \hhat{n}_+(t,k_4)\,dt,\\
\frac{i}{4}\int _0^\de \int _{\Sigma _4}\hhat{u}(t,k_1)\hhat{\bar{u}}(t,k_2)\big( \hhat{n}_++\hhat{n}_-\big) (t,k_3)\big( \sgm _-(k_{13},k_2)-m_{13}^2-\sgm _+(k_{23},k_1)+m_{23}^2\big) \hhat{n}_-(t,k_4)\,dt.}
It is then sufficient to estimate
\eqq{&\Big| \int _0^\de \int _{\Sigma _4}\big( \sgm _\pm (k_{13},k_2)-m_{13}^2\big) \hhat{u}(t,k_1)\hhat{\bar{u}}(t,k_2)\hhat{n}_\pm (t,k_3)\hhat{n}_\pm (t,k_4)\,dt\Big| \\
&\lec \int _{\ze _1+\ze _2+\ze _3+\ze _4=0}\big| \sgm _\pm (k_{13},k_2)-m_{13}^2\big| |\ti{\psi _\de u}(\ze _1)\ti{\psi _\de \bar{u}}(\ze _2)\ti{\chi _\de n_\pm}(\ze _3)\ti{\chi _\de n_\pm}(\ze _4)|\\
&\lec \sum _{N_1,\dots ,N_4\ge 1}\int _{\ze _1+\ze _2+\ze _3+\ze _4=0}\big| \sgm _\pm (k_{13},k_2)-m_{13}^2\big| \notag \\[-10pt]
&\hspace{140pt} \times |\ti{\psi _\de P_{N_1}u}(\ze _1)\ti{\psi _\de P_{N_2}\bar{u}}(\ze _2)\ti{\chi _\de P_{N_3}n_\pm}(\ze _3)\ti{\chi _\de P_{N_4}n_\pm}(\ze _4)|}
with an arbitrary choice of $\pm$.
However, since the choice of $n_{\pm}$ plays no role in the following, we consider the case $n_+$ only, and write
\eqq{\ti{u_1}:=|\ti{\psi _\de P_{N_1}u}|,\quad \ti{\bbar{u_2}}:=|\ti{\psi _\de P_{N_2}\bar{u}}|,\quad \ti{n_3}:=|\ti{\chi _\de P_{N_3}n_+}|,\quad \ti{n_4}:=|\ti{\chi _\de P_{N_4}n_+}|}
for simplicity.
We thus need to estimate
\eq{term_quar1_red}{\sum _{N_1,\dots ,N_4\ge 1}\int _{\ze _1+\ze _2+\ze _3+\ze _4=0}\big| \sgm _\pm (k_{13},k_2)-m_{13}^2\big| \ti{u_1}(\ze _1)\ti{\bbar{u_2}}(\ze _2)\ti{n_3}(\ze _3)\ti{n_4}(\ze _4).}

First, we state an estimate which will be frequently used later.
\begin{lem}
Suppose that $u$ and $n$ satisfy
\eqq{\Supp{\ti{u}}{\FR{P}_{N_1}},\qquad \Supp{\ti{n}}{\FR{P}_{N}}}
for some dyadic $N_1,N\ge 1$.
Then, for any $0<\e \ll 1$, we have
\eq{hojolemma}{\norm{un}{L^2_{t,x}}\lec \norm{u}{X^{2\e ,\frac{1}{2},1}_S}\norm{n}{X^{0,\frac{1}{2}-\e ,1}_{W_\pm}}+\norm{u}{X^{\frac{1}{2}+\e ,\frac{1}{2},1}_S}\norm{n}{X^{0,0,1}_{W_\pm}}.}
Here, the $\pm$ signs are allowed to be chosen as $(+,+)$ or $(-,-)$ only.
\end{lem}
\begin{proof}
From Lemma~\ref{lem_bs}, we have
\eqq{\norm{un}{L^2_{t,x}}\lec \norm{u}{X^{0,\frac{1}{2},1}_S}\norm{n}{X^{0,\frac{1}{2},1}_{W_\pm}}+\norm{u}{X^{\frac{1}{2},\frac{1}{2},1}_S}\norm{n}{X^{0,0,1}_{W_\pm}}.}
On the other hand, an application of the H\"older inequality shows that
\eqq{\norm{un}{L^2_{t,x}}\lec \norm{u}{L^\I _{t,x}}\norm{n}{L^2_{t,x}}\lec \norm{u}{X^{1,\frac{1}{2},1}_S}\norm{n}{L^2_{t,x}}.}
The required estimate is obtained from an interpolation between them.
\end{proof}

Let us begin to estimate \eqref{term_quar1_red}.
First of all, we remark that the multiplier $\sgm _{\pm}(k_{13},k_2)-m_{13}^2$ vanishes if $N_2,N_4\ll N$.
We consider some cases separately.

\textbf{Case 1}. $N_2\gec N_4$.
In this case we can assume $N_2\gec N$ and bound the multiplier by $1$.
Also, we see that either $N_1$ or $N_2$ has to be comparable to the biggest one among $N_j$'s.

(i) Consider the case $N_1\gec N$.
We use \eqref{hojolemma} twice to have
\eqq{\eqref{term_quar1_red}&\lec \sum _{N_1,\dots ,N_4}\norm{u_1n_3}{L^2}\norm{\bbar{u_2}n_4}{L^2}\\
&\lec \sum _{N_1,\dots ,N_4}\big( \frac{N_1}{N}\big) ^{1-s}\big( \frac{N_2}{N}\big) ^{1-s}\frac{1}{N_1N_2}\\
&\hx \times \Big( N_1^{2\e}\norm{Iu_1}{X^{1,\frac{1}{2},1}_S}\norm{n_3}{X^{0,\frac{1}{2}-\e ,1}_{W_+}}+N_1^{\frac{1}{2}+\e}\norm{Iu_1}{X^{1,\frac{1}{2},1}_S}\norm{n_3}{X^{0,0,1}_{W_+}}\Big) \\
&\hx \times \Big( N_2^{2\e}\norm{Iu_2}{X^{1,\frac{1}{2},1}_S}\norm{n_4}{X^{0,\frac{1}{2}-\e ,1}_{W_+}}+N_2^{\frac{1}{2}+\e}\norm{Iu_2}{X^{1,\frac{1}{2},1}_S}\norm{n_4}{X^{0,0,1}_{W_+}}\Big) .}

Since $s>\frac{1}{2}$, there remains $N_1^{0-}N_2^{0-}$ if we choose $\e >0$ sufficiently small.
Summing over $N_j$'s and then applying \eqref{est_stability} and \eqref{est_stability_chi}, we obtain a bound of
\eqq{(N^{-2+}+N^{-1+}\de ^{1-})\norm{Iu}{X^{1,\frac{1}{2},1}_S}^2\norm{n_+}{X^{0,\frac{1}{2},1}_{W_+}}^2.}

(ii) Consider the case $N_1\ll N$, where we may assume $N_2\gg N_1$ and $N_2$ is comparable to the max.
We further decompose the integral as 
\eq{term_tochuu1}{\sum _{N_2\gec N}\sum _{N_1\ll N}\sum _{N_3,N_4\lec N_2}\sum _{L_1,\dots ,L_4\ge 1}\int _{\ze _1+\dots +\ze _4=0}\ti{Q^S_{L_1}u_1}(\ze _1)\ti{\bbar{Q^S_{L_2}u_2}}(\ze _2)\ti{Q^{W_+}_{L_3}n_3}(\ze _3)\ti{Q^{W_+}_{L_4}n_4}(\ze _4).}
Observe that if $\ze _1+\dots +\ze _4=0$, then
\eqq{\bbar{L}_{1234}&\gec \big| (\tau _1+|k_1|^2)+(\tau _2-|k_2|^2)+(\tau _3+|k_3|)+(\tau _4+|k_4|)\big| \\
&=\big| |k_1|^2-|k_2|^2+|k_3|+|k_4|\big| \gec N_2^2.}

We begin with the case $\bbar{L}_{34}=\bbar{L}_{1234}$.
Without loss of generality we assume $L_3$ is the biggest one.
We apply the H\"older inequality and Lemma~\ref{lem_bs}~(ii) to obtain that
\eqq{&\sum _{L_1,\dots ,L_4\ge 1}\int _{\ze _1+\dots +\ze _4=0}\ti{Q^S_{L_1}u_1}(\ze _1)\ti{\bbar{Q^S_{L_2}u_2}}(\ze _2)\ti{Q^{W_+}_{L_3}n_3}(\ze _3)\ti{Q^{W_+}_{L_4}n_4}(\ze _4)\\
&\lec \sum _{L_1,\dots ,L_4\ge 1}\norm{Q^S_{L_1}u_1}{L^\I _{t,x}}\norm{Q^{W_+}_{L_3}n_3}{L^2_{t,x}}\norm{\bbar{Q^S_{L_2}u_2}Q^{W_+}_{L_4}n_4}{L^2_{t,x}}\\
&\lec \sum _{L_2,L_4\ge 1}\norm{u_1}{X_S^{1,\frac{1}{2},1}}N_2^{-1+}\norm{n_3}{X^{0,\frac{1}{2}-,1}_{W_+}}\underline{L}_{24}^{\frac{1}{2}}\Big( \frac{\bbar{L}_{24}}{N_2}+1\Big) ^{\frac{1}{2}}N_4^{\frac{1}{2}}\norm{Q^S_{L_2}u_2}{L^2_{t,x}}\norm{Q^{W_+}_{L_4}n_4}{L^2_{t,x}}\\
&\lec \Big( \frac{N_2}{N}\Big) ^{1-s}N_2^{-2+}N_4^{\frac{1}{2}}\norm{Iu_1}{X_S^{1,\frac{1}{2},1}}\norm{Iu_2}{X_S^{1,\frac{1}{2},1}}\norm{n_3}{X^{0,\frac{1}{2}-,1}_{W_+}}\Big( N_2^{-\frac{1}{2}}\norm{n_4}{X^{0,\frac{1}{2}-,1}_{W_+}}+\norm{n_4}{X^{0,0,1}_{W_+}}\Big) .
}
At the last inequality we have used $\bbar{L}_{24}^{0+}\le L_3^{0+}$.
We perform the summation in $N_j$'s and use \eqref{est_stability} and \eqref{est_stability_chi}, concluding
\eqq{\eqref{term_tochuu1}\lec (N^{-2+}+N^{-\frac{3}{2}+}\de ^{\frac{1}{2}-})\norm{Iu}{X^{1,\frac{1}{2},1}_S}^2\norm{n_+}{X^{0,\frac{1}{2},1}_{W_+}}^2.}

We next treat $\bbar{L}_{12}=\bbar{L}_{1234}\gg \bbar{L}_{34}$, which is actually the worst case.
(When $L_1$ is the max, however, we can have some better bound than obtained below.)
If $L_2$ is the max, \eqref{term_tochuu1} is bounded by
\eqq{&\sum _{N_2\gec N}\sum _{N_1\ll N}\sum _{N_3,N_4\lec N_2}\sum _{L_1,\dots ,L_4\ge 1}\norm{Q^S_{L_1}u_1}{L^\I _{t,x}}\norm{Q^S_{L_2}u_2}{L^2_{t,x}}\norm{Q^{W_+}_{L_3}n_3}{L^4_{t,x}}\norm{Q^{W_+}_{L_4}n_4}{L^4_{t,x}}.}
Now, we use the $L^4$ Strichartz estimate for wave (Lemma~\ref{lem_BEforwave}) to bound this by
\eqq{&\sum _{N_2\gec N}\sum _{N_1\ll N}\sum _{N_3,N_4\lec N_2}\norm{u_1}{X_S^{1,\frac{1}{2},1}}N_2^{-1}\norm{u_2}{X^{0,\frac{1}{2},1}_S}(N_3N_4)^{\frac{3}{8}}\norm{n_3}{X_{W_+}^{0,\frac{3}{8},1}}\norm{n_4}{X_{W_+}^{0,\frac{3}{8},1}}\\
&\lec \sum _{N_2\gec N}\sum _{N_1\ll N}\sum _{N_3,N_4\lec N_2}\Big( \frac{N_2}{N}\Big) ^{1-s}N_2^{-\frac{5}{4}}\norm{Iu_1}{X_S^{1,\frac{1}{2},1}}\norm{Iu_2}{X^{1,\frac{1}{2},1}_S}\norm{n_3}{X_{W_+}^{0,\frac{3}{8},1}}\norm{n_4}{X_{W_+}^{0,\frac{3}{8},1}}\\
&\lec N^{-\frac{5}{4}+}\de ^{\frac{1}{4}-}\norm{Iu}{X^{1,\frac{1}{2},1}_S}^2\norm{n_+}{X^{0,\frac{1}{2},1}_{W_+}}^2.}
If $L_1$ is the max, we first apply the H\"older inequality as $L^2_tL^\I _x\cdot L^\I _tL^2_x\cdot L^4_{t,x}\cdot L^4_{t,x}$ and then make a similar argument, concluding the same bound.

\textbf{Case 2}. $N_2\ll N_4$.
In this case $|k_{13}|=|k_{24}|\gg |k_2|$ in the integral \eqref{term_quar1_red}, so we use Lemma~\ref{lem_bound_sigma} (1) to replace the multiplier with $\frac{N_2^2}{N_4^2}+\frac{1}{N_4}$.
We may also assume $N_4\gec N$.

(i) The case $N_1\gec N$.
We follow the argument in Case 1 (i).
Applying \eqref{hojolemma} twice, we have
\eqq{\eqref{term_quar1_red}\lec &\sum _{N_1,\dots ,N_4}\big( \frac{N_2^2}{N_4^2}+\frac{1}{N_4}\big) \big( \frac{N_1}{N}\big) ^{1-s}\Big( \big( \frac{N_2}{N}\big) ^{1-s}+1\Big) \frac{1}{N_1N_2}\\
&\hx \times \Big( N_1^{2\e}\norm{Iu_1}{X^{1,\frac{1}{2},1}_S}\norm{n_3}{X^{0,\frac{1}{2}-\e ,1}_{W_+}}+N_1^{\frac{1}{2}+\e}\norm{Iu_1}{X^{1,\frac{1}{2},1}_S}\norm{n_3}{X^{0,0,1}_{W_+}}\Big) \\
&\hx \times \Big( N_2^{2\e}\norm{Iu_2}{X^{1,\frac{1}{2},1}_S}\norm{n_4}{X^{0,\frac{1}{2}-\e ,1}_{W_+}}+N_2^{\frac{1}{2}+\e}\norm{Iu_2}{X^{1,\frac{1}{2},1}_S}\norm{n_4}{X^{0,0,1}_{W_+}}\Big) .}
After some calculation we reach the bound with prefactor $N^{-2+}+N^{-1+}\de ^{1-}$.

(ii) The case $N_1\ll N$, where $N_3\sim N_4$ is the max.
If $N_2$ is so small that $N_2^2\lec N_4$, the multiplier is bounded by $\frac{1}{N_4}$ and we obtain
\eqq{\eqref{term_quar1_red}&\lec \sum _{N_1,\dots ,N_4}\frac{1}{N_4}\Big( \big( \frac{N_2}{N}\big) ^{1-s}+1\Big) \norm{Iu_1}{L^{2+}_tL^\I _x}\norm{Iu_2}{L^{2+}_tL^\I _x}\norm{n_3}{L^{\I -}_tL^2_x}\norm{n_4}{L^{\I -}_tL^2_x}\\
&\lec \sum _{N_1,\dots ,N_4}\frac{1}{N_4}\Big( \big( \frac{N_2}{N}\big) ^{1-s}+1\Big) \norm{Iu_1}{X^{1,0+,1}_S}\norm{Iu_2}{X^{1,0+,1}_S}\norm{n_3}{X^{0,\frac{1}{2}-,1}_{W_+}}\norm{n_4}{X^{0,\frac{1}{2}-,1}_{W_+}}\\
&\lec N^{-1+}\de ^{1-}\norm{Iu}{X^{1,\frac{1}{2},1}_S}^2\norm{n_+}{X^{0,\frac{1}{2},1}_{W_+}}^2.}
We thus assume $N_2^2\gg N_4$.
Now, we can employ the same argument as Case 1 (ii) with a minor modification exploiting the term $\frac{N_2^2}{N_4^2}$.
The bound will be $N^{-\frac{5}{4}+}\de ^{\frac{1}{4}-}$.

\textbf{\underline{Estimate of \eqref{term_ac_quar2}}}.
We bound the multiplier $\sgm _+-\sgm _-$ by $1$, and decompose each function dyadically in $k$, obtaining a bound of \eqref{term_ac_quar2}
as
\eq{term_quar2_red}{\sum _{N_1,\dots ,N_4}(N_1+N_2)\int _\R \int _{\Tg ^2}  u_1\bbar{u_2}u_3\bbar{u_4}\,dx\,dt,}
where 
\eqq{\ti{u_1}:=|\ti{\psi _\de P_{N_1}u}|,\quad \ti{\bbar{u_2}}:=|\ti{\psi _\de P_{N_2}\bar{u}}|,\quad \ti{u_3}:=|\ti{\chi _\de P_{N_3}u}|,\quad \ti{\bbar{u_4}}:=|\ti{\chi _\de P_{N_4}\bar{u}}|.}
Without loss of generality we assume $N_1\ge N_2$, which implies $N_1\gec N$; otherwise the multiplier vanishes.
We may also assume that at least two of $N_j$'s are $\gec N$

\textbf{Case 1}. Two of $N_j$'s $\ll N$.
It will be sufficient to consider the particular case $N_1,N_2\gec N\gg N_3,N_4$, where $N_1\sim N_2$ is the max.
From a H\"older argument,
\eqq{&\eqref{term_quar2_red}\lec \sum _{N_1\sim N_2\gec N\gg N_3,N_4}N_1\norm{u_1}{L^{2+}_tL^2_x}\norm{u_2}{L^{2+}_tL^2_x}\norm{u_3}{L^{\I -}_tL^\I _x}\norm{u_4}{L^{\I -}_tL^\I _x}\\
&\lec \sum _{N_1,\dots ,N_4}\big( \frac{N_1}{N}\big) ^{1-s}\big( \frac{N_2}{N}\big) ^{1-s}\frac{1}{N_1}\norm{Iu_1}{X^{1,0+,1}_S}\norm{Iu_2}{X^{1,0+,1}_S}\norm{Iu_3}{X^{1,\frac{1}{2}-,1}_S}\norm{Iu_4}{X^{1,\frac{1}{2}-,1}_S}\\
&\lec N^{-1+}\de ^{1-}\norm{Iu}{X^{1,\frac{1}{2},1}_S}^4.}

\textbf{Case 2}. More than two of $N_j$'s $\gec N$.
Prepare the following lemma.
\begin{lem}
Suppose that $u_1$ and $u_2$ satisfy
\eqq{\Supp{\ti{u_1}}{\FR{P}_{N_1}},\qquad \Supp{\ti{u_2}}{\FR{P}_{N_2}}}
for some dyadic $N_1,N_2\ge 1$.
Then, for any $0<\e \ll 1$, we have
\eq{hojolemma2}{\norm{u_1u_2}{L^2_{t,x}}\lec \overline{N}_{12}^{\e}\big( \norm{u_1}{X^{0,\frac{1}{2}-\e ,1}_S}\norm{u_2}{X^{4\e ,\frac{1}{2}-\e ,1}_S}+\norm{u_1}{X^{0,\frac{1}{2}-\e ,1}_S}\norm{u_2}{X^{\frac{1}{2}+2\e ,\e ,1}_S}\big) .}
\end{lem}
\begin{proof}
Making dyadic decompositions, we have
\eq{term_tochuu2}{\norm{u_1u_2}{L^2}=\norm{u_1\bbar{u_2}}{L^2_{t,x}}\lec \sum _{N_0\le \overline{N}_{12}}\sum _{L_1,L_2\ge 1}\norm{P_{N_0}(Q^S_{L_1}u_1\cdot \bbar{Q^S_{L_2}u_2})}{L^2}.}
We use Lemma~\ref{lem_bs} (i) for $N_0\ge 2$ and Lemma~\ref{cor_te_low} for $N_0=1$,
\eqq{\eqref{term_tochuu2}\lec \sum _{N_0\le \overline{N}_{12}}\sum _{L_1,L_2\ge 1}\underline{L}_{12}^{\frac{1}{2}}\big( \overline{L}_{12}^{\frac{1}{2}}+N_2^{\frac{1}{2}}\big) \norm{Q^S_{L_1}u_1}{L^2}\norm{Q^S_{L_2}u_2}{L^2}.}
On the other hand, we apply the H\"older inequality to obtain
\eqq{\eqref{term_tochuu2}\lec \sum _{N_0\le \overline{N}_{12}}\sum _{L_1,L_2\ge 1}\underline{L}_{12}^{\frac{1}{2}}N_2\norm{Q^S_{L_1}u_1}{L^2}\norm{Q^S_{L_2}u_2}{L^2}.}
The required estimate is obtained from an interpolation between them.
\end{proof}

We go back to the estimate of \eqref{term_quar2_red}.
Define the biggest, the second biggest and the smallest one among $N_2,N_3,N_4$ as $N_a,N_b$ and $N_c$, respectively.
Then, we may assume that $N_a\gec N_1,N_b,N_c$.
From \eqref{hojolemma2}, we obtain
\eqq{&\eqref{term_quar2_red}\lec \sum _{N_1,N_a,N_b\gec N,N_c}N_1\norm{u_1u_b}{L^2}\norm{u_au_c}{L^2}\\
&\lec \sum _{N_1,\dots ,N_4}\big( \frac{N_1}{N}\big) ^{1-s}\big( \frac{N_b}{N}\big) ^{1-s}N_a^{\e}\big( \frac{N_a}{N}\big) ^{1-s}\Big( \big( \frac{N_c}{N}\big) ^{1-s}+1\Big) \frac{N_a^{\e}}{N_a}\\
&\hx \times \Big( N_b^{-1+4\e}\norm{Iu_1}{X^{1,\frac{1}{2}-\e ,1}_S}\norm{Iu_b}{X^{1,\frac{1}{2}-\e ,1}_S}+N_b^{-\frac{1}{2}+2\e}\norm{Iu_1}{X^{1,\frac{1}{2}-\e ,1}_S}\norm{Iu_b}{X^{1,\e ,1}_S}\Big) \\
&\hx \times \Big( N_c^{-1+4\e}\norm{Iu_a}{X^{1,\frac{1}{2}-\e ,1}_S}\norm{Iu_c}{X^{1,\frac{1}{2}-\e ,1}_S}+N_c^{-\frac{1}{2}+2\e}\norm{Iu_a}{X^{1,\frac{1}{2}-\e ,1}_S}\norm{Iu_c}{X^{1,\e ,1}_S}\Big) .}
We observe
\eqs{\big( \frac{N_1}{N}\big) ^{1-s}\big( \frac{N_a}{N}\big) ^{1-s}\frac{N_a^{2\e}}{N_a}\lec N^{-1+}N_a^{0-},\\
\big( \frac{N_b}{N}\big) ^{1-s}N_b^{-1+4\e}\lec N^{-1+},\quad \big( \frac{N_b}{N}\big) ^{1-s}N_b^{-\frac{1}{2}+2\e}\lec N^{-\frac{1}{2}+},\\
\Big( \big( \frac{N_c}{N}\big) ^{1-s}+1\Big) N_c^{-1+4\e}+\Big( \big( \frac{N_c}{N}\big) ^{1-s}+1\Big) N_c^{-\frac{1}{2}+2\e}\lec 1}
if $s>\frac{1}{2}$, and $\e >0$ sufficiently small.
Consequently, we obtain a bound of
\eqq{(N^{-2+}+N^{-\frac{3}{2}+}\de ^{\frac{1}{2}-})\norm{Iu}{X^{1,\frac{1}{2},1}_S}^4.}
Now, the proof of Proposition~\ref{prop_ac} is completed.
\end{proof}

\medskip
\subsection{Modified local well-posedness}

We can upgrade the bilinear estimates given in Lemma~\ref{prop_be} to the following.
\begin{lem}\label{lem_modifiedbe}
Let $1>s\ge \frac{1}{2}$.
Then, we have
\eqs{\norm{\Sc{I}_S(I(n_{\pm}u))}{X^{1,\frac{1}{2},1}_S(\de )}\lec \de ^{\frac{1}{2}-}\norm{Iu}{X^{1,\frac{1}{2},1}_S(\de )}\norm{n_{\pm}}{X^{0,\frac{1}{2},1}_{W_{\pm}}(\de )},\\
\norm{\Sc{I}_{W_+}(|\nabla |(u_1\bbar{u_2}))}{X^{0,\frac{1}{2},1}_{W_{+}}(\de )}\lec \de ^{\frac{1}{2}-}\norm{Iu_1}{X^{1,\frac{1}{2},1}_S(\de )}\norm{Iu_2}{X^{1,\frac{1}{2},1}_S(\de )}.}
\end{lem}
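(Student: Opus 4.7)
The plan is to reduce both inequalities to Lemma~\ref{prop_be} (applied with $s = 1$) via the standard $I$-method trick of comparing the multiplier $m = m_{1-s,N}$ on the output frequency with those on the Schr\"odinger inputs. Write $I(n_\pm u) = n_\pm\, Iu + [I, n_\pm] u$ and $I(u_1 \bar u_2) = (Iu_1)\, \bar u_2 + [I, \bar u_2] u_1$. For each decomposition the first term, after invoking Lemma~\ref{prop_be} with $s = 1$ (so that the Schr\"odinger input is already normalized at the $H^1$-level), immediately produces the desired bound with $Iu_j$ in place of $u_j$. It therefore suffices to control each commutator in the same norm.

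On the Fourier side, the commutator $[I, n_\pm] u$ has symbol $m(k_0) - m(k_1)$ with $k_0 = k_1 \pm k_3$, and similarly for the wave commutator. I would decompose dyadically in $N_0, N_1, N_3$ and split into two regimes. When $N_3 \ll N_1$ (so that $N_0 \sim N_1$), the mean value theorem together with the bound $|\nabla m(\xi)| \lec m(\xi)/|\xi|$ for $|\xi| \gec N$ gives $|m(k_0) - m(k_1)| \lec (N_3/N_1)\, m(N_1)$; the extra factor $N_3/N_1$ provides a derivative gain that converts the commutator into a form already covered by Lemma~\ref{prop_be} after an $\ell^2$-summation in the dyadic scales. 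When $N_3 \gec N_1$, we use $|m(k_0) - m(k_1)| \lec 1$ together with $m(N_1) \gec m(N_0)$ (since $m$ is non-increasing and $N_0 \lec N_3$), reducing the commutator to the ungained bilinear estimate with only a mild power of $N_3/N_1$ left over, which is absorbable after Cauchy--Schwarz in $N_3$.

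The main obstacle is the high-high-to-low Schr\"odinger interaction appearing in the second estimate, namely the regime $N_1 \sim N_2 \gg N_0$. Here the natural multiplier ratio $m(N_0) / \bigl( m(N_1)\, m(N_2) \bigr)$ can be as large as $(N_1/N)^{2(1-s)}$, and the pointwise comparison above fails. To absorb this loss I would call on the refined trilinear Strichartz-type bounds of Lemmas~\ref{prop_te_highlow}--\ref{prop_te_highhigh-l_2d}, which supply a decay factor of a positive power of $N_0/N_1$ precisely in this frequency configuration. Combined with the $\de^{0+}$ room in $\de^{1/2-}$ and the $\ell^1_L$ structure of the function spaces $X^{s,b,1}$ (which facilitates summation over the modulation variables), this gain dominates the $m$-loss throughout the range $s \ge 1/2$ and closes both estimates.
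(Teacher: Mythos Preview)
You have misread the second estimate: there is no $I$-operator on the output side. (Recall that in the periodic setting $r=0$, so $I^{W_+}_{0,N}$ is the identity; the symbol $\Sc{I}_{W_+}$ in the statement is the Duhamel operator, not the smoothing operator.) Consequently there is no high--high--to--low loss to recover, and your appeal to the trilinear estimates of Lemmas~\ref{prop_te_highlow}--\ref{prop_te_highhigh-l_2d} is unnecessary. The paper's proof of this inequality is one line: apply \eqref{est_be_w} at regularity $s$ and use the pointwise bound $\LR{k}^s\le m(k)\LR{k}$, which gives $\tnorm{u_j}{X^{s,\frac{1}{2},1}_S}\le \tnorm{Iu_j}{X^{1,\frac{1}{2},1}_S}$.

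For the first estimate your commutator decomposition is a legitimate route, but it is more work than needed and your sketch in the regime $N_3\gec N_1$ is incomplete (you still have to absorb the loss $(N_1/N)^{1-s}$ coming from replacing $u$ by $Iu$ when $N_1\gec N$, and ``a mild power of $N_3/N_1$'' does not obviously do this without further argument). The paper avoids commutators entirely: split $u$ according to whether its frequency $k_2$ lies below or above $N$. For low frequencies $Iu=u$ and $I\le 1$ on the output, so \eqref{est_be_s} with $s=1$ applies directly. For high frequencies one observes the pointwise multiplier inequality
\[
m(k_1)\LR{k_1}^{1-s}\lec N^{1-s}\sim m(k_2)\LR{k_2}^{1-s}\qquad (|k_2|\gec N),
\]
which converts the $X^{1}$-weight $m(k_1)\LR{k_1}$ on the output into $m(k_2)\LR{k_2}\cdot \LR{k_1}^s/\LR{k_2}^s$, reducing matters to \eqref{est_be_s} at regularity $s$. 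This two-case pointwise comparison is shorter and sidesteps the dyadic bookkeeping your approach requires.
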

\begin{proof}
The second estimate immediately follows from \eqref{est_be_w}, since $\tnorm{u}{X^{s,\frac{1}{2},1}_S}\le \tnorm{Iu}{X^{1,\frac{1}{2},1}_S}$.

For the first estimate, we decompose $u$ into two parts.
For the low frequency part, $\Supp{\ti{u}}{\shugo{|k|\lec N}}$, the claim follows from $I\le 1$ and \eqref{est_be_s} with $s=1$.
For high frequency $\Supp{\ti{u}}{\shugo{|k|\gec N}}$, we observe that
\eqq{m(k_1)\LR{k_1}^{1-s}\lec N^{1-s}\sim m(k_2)\LR{k_2}^{1-s}}
for $|k_2|\gec N$, where $k_1$ and $k_2$ denote the frequency variables for $n_{\pm}u$ and $u$, respectively.
Then the estimate follows from \eqref{est_be_s}.
\end{proof}

The standard iteration argument using Lemma~\ref{lem_modifiedbe} and \eqref{est_lin_sol} yields the modified local well-posedness adapted to the $I$-method.
\begin{prop}\label{prop_modifiedlwp}
Let $1>s\ge \frac{1}{2}$.
Then, for any $(u_0,n_{+0})\in H^s\times L^2$, there exists a unique solution to \eqref{ZH2}, $(u,n_+)\in X^{s,\frac{1}{2},1}_S(\de )\times X^{0,\frac{1}{2},1}_{W_+}(\de )$, with the existence time
\eqq{\de \sim (\norm{Iu_0}{H^1}+\norm{n_{+0}}{L^2})^{-2-},}
such that the following estimate holds:
\eqq{\norm{Iu}{X^{1,\frac{1}{2},1}_S(\de )}+\norm{n_+}{X^{0,\frac{1}{2},1}_{W_+}(\de )}\lec \norm{Iu_0}{H^1}+\norm{n_{+0}}{L^2}.}
In particular, we have
\eqq{\sup _{-\de \le t\le \de}\big( \norm{Iu(t)}{H^1}+\norm{n_+(t)}{L^2}\big) \lec \norm{Iu_0}{H^1}+\norm{n_{+0}}{L^2}.}
\end{prop}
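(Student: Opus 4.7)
The plan is to run a standard Picard iteration adapted to the $I$-operator. First I would reformulate \eqref{ZH2} in Duhamel form, apply $I=I^S_{s,N}$ to the Schr\"odinger equation (noting that $I$ commutes with $e^{it\Delta}$ and $\Sc{I}_S$ so that we may write $\Sc{I}_S(I(n_\pm u))$ on the right hand side), and localize in time with the cutoff $\psi _\de$. This produces the map
\eqq{\Phi (Iu,n_+):=\Big( \psi _\de e^{it\Delta}Iu_0-\ttfrac{1}{2}\psi _\de \Sc{I}_S\big( I((n_++n_-)u)\big) \,,\, \psi _\de e^{-it|\nabla |}n_{+0}+\psi _\de \Sc{I}_{W_+}(|\nabla |(|u|^2))\Big) }
which we wish to contract on a suitable ball in the space $Y_\de :=X^{1,\frac{1}{2},1}_S(\de )\times X^{0,\frac{1}{2},1}_{W_+}(\de )$, where the first component represents $Iu$. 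Note $n_-=\bbar{n_+}$ is controlled in the same norm as $n_+$.

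Next I would combine the linear estimates \eqref{est_lin_sol} and \eqref{est_lin_duh} of Lemma~\ref{lem_linear} with the modified bilinear estimates of Lemma~\ref{lem_modifiedbe}. Setting $A:=\norm{Iu_0}{H^1}+\norm{n_{+0}}{L^2}$ and writing $\tnorm{(Iu,n_+)}{Y_\de}$ for the sum of the two component norms, this immediately yields
\eqq{\norm{\Phi (Iu,n_+)}{Y_\de}\lec A+\de ^{\frac{1}{2}-}\norm{(Iu,n_+)}{Y_\de}^2.}
Choosing a closed ball of radius $R=CA$ for a suitable constant and imposing $\de ^{\frac{1}{2}-}CA\ll 1$, i.e.\ $\de \sim A^{-2-}$, forces $\Phi$ to map the ball into itself. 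Applying the same bilinear estimates to differences $(Iu-Iu',n_+-n'_+)$ (which are bilinear in the solution pair) gives the analogous contraction estimate, so $\Phi$ has a unique fixed point in the ball.

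Uniqueness in the full space (beyond the ball) is obtained by the usual continuity-in-time argument on the $Y_\de$-norm of the difference. The final sup-in-time bound $\sup _{|t|\le \de}(\norm{Iu(t)}{H^1}+\norm{n_+(t)}{L^2})\lec A$ follows from the standard embeddings $X^{1,\frac{1}{2},1}_S\hookrightarrow C(\R;H^1)$ and $X^{0,\frac{1}{2},1}_{W_+}\hookrightarrow C(\R;L^2)$, together with the fixed-point identity.

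I do not foresee any significant obstacle: the entire nonlinear difficulty has already been encoded in Lemma~\ref{lem_modifiedbe}, whose nontrivial point is precisely that the $I$-operator can be distributed to the highest-frequency factor in the Schr\"odinger-wave product without losing the $\de ^{\frac{1}{2}-}$ gain. Given that lemma, the remainder of the argument is a routine Picard scheme; the only nuance worth flagging is that the power $-2-$ (rather than $-2$) in the lifetime comes from the $\ell^1$-type $X^{s,b,1}$-norm forcing us to accept a loss of $\de ^{0+}$ in the time-localization estimate \eqref{est_stability}.
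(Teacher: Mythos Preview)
Your proposal is correct and matches the paper's approach exactly: the paper simply states that the result follows from ``the standard iteration argument using Lemma~\ref{lem_modifiedbe} and \eqref{est_lin_sol}'', and your Picard scheme is precisely that argument spelled out. One minor redundancy: since the bilinear estimates of Lemma~\ref{lem_modifiedbe} already incorporate the Duhamel operator and the $\de ^{\frac{1}{2}-}$ gain, you do not need to invoke \eqref{est_lin_duh} separately---but this is harmless.
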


\medskip
\subsection{Proof of Theorem~\ref{thm_global}}

Let $(u_0,n_{0+})\in H^s \times L^2$ be an initial datum with $\tnorm{u_0}{L^2}<\tnorm{Q}{L^2(\R^2)}$.
The datum then satisfies
\eqq{\tnorm{Iu_0}{H^1}+\tnorm{n_{+0}}{L^2}\lec N^{1-s},\qquad \tnorm{Iu_0}{L^2}\le \tnorm{u_0}{L^2}<\tnorm{Q}{L^2(\R^2)},}
and its modified energy obeys
\eqq{H(Iu_0,n_{+0})\le C_0N^{2(1-s)}.}
Since $H(Iu,n_+)(t)$ and the (a priori bounded) $L^2$ norm of $Iu(t)$ control $\tnorm{Iu(t)}{H^1}+\tnorm{n_+(t)}{L^2}$, we see from Proposition~\ref{prop_modifiedlwp} that the solution to the initial value problem on $[0,t_0]$ can be extended up to $t=t_0+\de$ with a uniform time $\de \sim N^{-2(1-s)-}$ and satisfies
\eqq{\norm{Iu(\cdot -t_0)}{X^{1,\frac{1}{2},1}_S(\de )}+\norm{n_+(\cdot -t_0)}{X^{0,\frac{1}{2},1}_{W_+}(\de )}\lec N^{1-s},}
as long as 
\eqq{H(Iu,n_{+})(t_0)\le 2C_0N^{2(1-s)}.}
If we could iterate the local theory $M$ times, then Propositions~\ref{prop_fixedtime} and \ref{prop_ac} imply that the increment of the modified energy would be bounded by
\eqq{&|H(Iu,n_+)(M\de )-H(Iu,n_+)(0)|\\
&\le |H(Iu,n_+)(M\de )-\ti{H}(u,n_+)(M\de )|+\sum _{j=0}^{M-1}|\ti{H}(u,n_+)((j+1)\de )-\ti{H}(u,n_+)(j\de )|\\
&\hx +|\ti{H}(u,n_+)(0)-H(Iu,n_+)(0)|\\
&\lec N^{-1+}(N^{1-s})^3+M\Big\{ N^{-1+}\de ^{\frac{1}{2}-}(N^{1-s})^3+(N^{-2+}+N^{-\frac{5}{4}+}\de ^{\frac{1}{4}-}+N^{-1+}\de ^{1-})(N^{1-s})^4\Big\} \\
&\sim \Big\{ N^{-s+}+M\big( N^{-1+}+N^{\frac{1}{4}-\frac{3}{2}s+}\big) \Big\} N^{2(1-s)},}
which means that we can repeat $O(N^{\min \shugo{1,\,\frac{3}{2}s-\frac{1}{4}}-})$ times, obtaining the solution up to some time $\sim \de N^{\min \shugo{1,\,\frac{3}{2}s-\frac{1}{4}}-}\sim N^{\min \shugo{2s-1,\,\frac{7}{2}s-\frac{9}{4}}-}$.
Hence, we can solve the equation up to the arbitrarily large given time $T$ by setting a large parameter $N$ to be $\sim T^{\max \shugo{\frac{1}{2s-1},\,\frac{4}{14s-9}}+}$, whenever $s>\frac{9}{14}$.

Moreover, we have
\eqq{\sup _{-T\le t\le T}\big( \norm{u(t)}{H^s}+\norm{n_+(t)}{L^2}\big) &\lec \sup _{-T\le t\le T}\big( \norm{Iu(t)}{H^1}+\norm{n_+(t)}{L^2}\big) \\
&\lec N^{1-s}\sim T^{\max \shugo{\frac{1-s}{2s-1},\,\frac{4(1-s)}{14s-9}}+}.}
Going back to the original Zakharov system \eqref{ZH}, we obtain the a priori estimate
\eqq{\sup _{-T\le t\le T}\big( \norm{u(t)}{H^s}+\norm{n(t)}{L^2}+\norm{|\nabla |^{-1}\p _tn(t)}{L^2}\big) \lec T^{\max \shugo{\frac{1-s}{2s-1},\,\frac{4(1-s)}{14s-9}}+},}
concluding the proof of Theorem~\ref{thm_global}.


\bigskip
\section{Global solutions for the nonperiodic case}

In this section we treat the $\R^2$ case and also put the operator $I$ on the wave equation.

\medskip
\subsection{Almost conservation law}
An adaptation of the argument for periodic problem easily implies the following almost conservation law.
\begin{prop}[Almost conservation law]\label{prop_ac_R2}
Let $1>s>\frac{1}{2}$, $0\ge r\ge s-1$ be such that $r>1-2s$ and $r>-\frac{1}{2}s$.
Let $0<\de \le 1$ and $(u,n_+)$ be a smooth solution to \eqref{ZH2} on $(t,x)\in [0,\de ]\times \R^2$.
Then, we have
\eqq{&|\ti{H}(u,n_+)(\de )-\ti{H}(u,n_+)(0)|\lec N^{-1+}\de ^{\frac{1}{2}-}\norm{Iu}{X^{1,\frac{1}{2},1}_S(\de )}^2\norm{In_+}{X^{0,\frac{1}{2},1}_{W_+}(\de )}\\
&\hx +(N^{-2+}+N^{-\frac{5}{4}+}\de ^{\frac{1}{4}-}+N^{-1+}\de ^{1-})\big( \norm{Iu}{X^{1,\frac{1}{2},1}_S(\de )}^2\norm{In_+}{X^{0,\frac{1}{2},1}_{W_+}(\de )}^2+\norm{Iu}{X^{1,\frac{1}{2},1}_S(\de )}^4\big) .}
\end{prop}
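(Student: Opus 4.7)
The plan is to mirror the proof of Proposition~\ref{prop_ac} term by term, with three main adjustments to accommodate the nonperiodic setting and the fact that $I$ is now applied to the wave component as well. First, I would compute $\frac{d}{dt}\tilde{H}(u,n_+)$ exactly as before; the structure of $\tilde{H}$ depends on $\sgm _{\pm}$ and the $I$-symbols symmetrically in $s$ and $r$, so the same cancellation reduces the problem to estimating the restricted trilinear term \eqref{term_ac_tri} (localized to $||k_1|^2-|k_2|^2|\le 2|k_{12}|$, forcing $|k_1|\sim|k_2|\gec N$) and the two quartilinear remainders analogous to \eqref{term_ac_quar1}--\eqref{term_ac_quar2}. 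Each wave factor carries an extra symbol $m_{-r}$; since $r\le 0$, this is bounded by $(|k|/N)^{-r}$ on the high frequencies, which is precisely absorbed by the extra margin we have in the $\mathbb{R}^2$ bilinear estimates below.

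For the trilinear term \eqref{term_ac_tri}, on $\mathbb{R}^2$ I would not invoke the periodic trilinear machinery (Lemmas \ref{prop_te_highlow}--\ref{prop_te_highhigh-l_2d}); instead I would directly apply the bilinear $X^{s,b}$-estimate of Lemma~\ref{lem_bs_R2}, which gives the sharper factor $(\underline{N}_{01}/N_1)^{1/2}$ with full weights $L_0^{1/2}L_1^{1/2}$. Splitting the integral into the two Schr\"odinger-wave pairs $(\ti{u}_1,\ti{I n_+}_3)$ and $(\ti{\bar u}_2,~)$ and summing dyadically, the prefactor $|k_{12}|\sim N_0$ is absorbed by $N_1N_2$, leaving an $N^{-1+}$ gain times a power loss from the $m_{-r}$ symbol on $n_+$; the constraint $r>-s/2$ ensures that the resulting sum $\sum_{N_1\sim N_2\gec N}(N_1/N)^{2(1-s)}(N_0/N)^{-r}N_0 N_1^{-2}$ still closes with a $N^{-1+}$ decay.

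For the quartilinear terms, I would follow the case analysis of Proposition~\ref{prop_ac} literally. In \eqref{term_ac_quar1}, the symmetrization with $m_{13}^2-m_{23}^2$ is preserved, and the two subcases ($N_2\gec N_4$ and $N_2\ll N_4$) are estimated by iterating \eqref{hojolemma}; in the $\mathbb{R}^2$ setting I would replace \eqref{hojolemma} with the analogous bound derived from Lemma~\ref{lem_bs_R2} and the endpoint Strichartz-type inequality \eqref{est_endpoint_R2} from Lemma~\ref{lem_endpoint_R2}, which directly supplies the $N^{-5/4+}\de^{1/4-}$ worst term in the statement. The factor $m_{-r}$ on each of the two $n_+$'s contributes at most $(\bbar{N}_{34}/N)^{-2r}$, and together with $r\ge s-1$ this is dominated by the $I$-gain coming from the two $u$'s, yielding the same three prefactors as in Proposition~\ref{prop_ac}. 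The term \eqref{term_ac_quar2} does not involve $n_+$ at all, so its estimate is identical to the periodic one, using the $\mathbb{R}^2$ version of \eqref{hojolemma2} (which holds by the same dyadic argument using Lemma~\ref{lem_bs_R2}).

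The main obstacle will be the bookkeeping of the three new symbol factors $m_{-r}$ in the wave entries: we must verify that in every frequency configuration the $m_{-r}$-loss is compensated by the $I$-gains already present, under the hypotheses $r\ge s-1$, $r>1-2s$, and $r>-s/2$. The first constraint equalizes the $I$-scaling between Schr\"odinger and wave so that Lemma~\ref{lem_modifiedbe} (in its $\mathbb{R}^2$ version) can be invoked without penalty; the second is the condition inherited from Proposition~\ref{prop_fixedtime}, and matches the threshold above which \eqref{term_ac_tri} sums; the third is what is needed for the worst high-low-low configuration of \eqref{term_ac_quar1}, where the wave has frequency $\sim N_3\sim N_4\gec N$ and the $m_{-r}^2$-loss must be beaten by the $(N/\bbar{N})^{1-s}$ gain from the two $Iu$'s together with the endpoint Strichartz improvement. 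Once these three inequalities are tracked through the case analysis, all three prefactors $N^{-2+}$, $N^{-5/4+}\de^{1/4-}$, $N^{-1+}\de^{1-}$ appear exactly as stated, completing the proof.
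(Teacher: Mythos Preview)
Your overall strategy---mirror the periodic proof of Proposition~\ref{prop_ac} while tracking the additional $m_{-r}$ losses on the wave factors---is exactly what the paper does. Two points diverge from the paper's actual argument, one harmless and one a genuine gap.

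For the trilinear term, the paper does \emph{not} switch to Lemma~\ref{lem_bs_R2}; it simply reuses the periodic trilinear estimates (Lemmas~\ref{prop_te_highlow}, \ref{cor_te_low}--\ref{prop_te_highhigh-l_2d}), which hold equally on $\R^2$, and inserts the extra factor $\big((N_0/N)^{-r}+1\big)$ into the sum \eqref{term_1}. The condition that makes this sum close is $2(1-s)-r<1$, i.e.\ $r>1-2s$, not $r>-s/2$ as you write in your trilinear paragraph (you get this right in your final paragraph). Your alternative via Lemma~\ref{lem_bs_R2} can be made to work, but it is not the paper's route.

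The real gap is in your treatment of the quartilinear piece \eqref{term_ac_quar1}. The $N^{-5/4+}\de^{1/4-}$ contribution does \emph{not} come from any analogue of \eqref{hojolemma}, nor from Lemma~\ref{lem_endpoint_R2}. In the periodic proof it arises in Case~1(ii) (and Case~2(ii)) when $\overline{L}_{12}=\overline{L}_{1234}\gg\overline{L}_{34}$: there one places the two wave factors in $L^4_{t,x}$ via the wave Strichartz estimate Lemma~\ref{lem_BEforwave}, which yields the exponents $3/8$ on each wave and hence $N_2^{-2}(N_3N_4)^{3/8}\lec N_2^{-5/4}$ spatially and $\de^{2(\frac12-\frac38)-}=\de^{1/4-}$ from \eqref{est_stability_chi}. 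The paper's proof of Proposition~\ref{prop_ac_R2} keeps this step verbatim (``we follow the argument for periodic case precisely''), only inserting the extra factors $\big((N_j/N)^{-r}+1\big)$ for $j=3,4$; summability then requires $1-s-2r<\tfrac54$, which is implied by $r>-\tfrac{s}{2}$. Lemma~\ref{lem_endpoint_R2} is a Duhamel bound on $\Sc{I}_S(uw)$ and is not suited to controlling a quartic spacetime integral; in the paper it appears only in Section~5.2 for the modified local theory, not in the almost-conservation estimate. Without Lemma~\ref{lem_BEforwave} (or an equivalent $L^4_{t,x}$ bound on wave-localized functions) you cannot produce the $N^{-5/4+}\de^{1/4-}$ term.

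Finally, the constraint $r>-\tfrac{s}{2}$ (equivalently $1-s-2r<1$) is needed already in Case~1(i) and Case~2(i), where \emph{both} Schr\"odinger frequencies are $\gec N$ and the worst configuration is $N\lec N_1\ll N_3\sim N_4\sim N_2$; it is not a ``high-low-low'' phenomenon as you describe.
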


\begin{proof}
We follow the proof of Proposition~\ref{prop_ac} and only indicate the difference from it.
We have to consider the following three terms:
\begin{align}
&\frac{i}{2}\int _0^\de \int _{\Sigma _3}\chf{\shugo{||\xi _1|^2-|\xi _2|^2|\le 2|\xi _{12}|}}(\xi _1,\xi _2)|\xi _{12}|\hhat{u}(t,\xi _1)\hhat{\bar{u}}(t,\xi _2) \label{term_ac_tri_R2}\\
&\quad \times \Big( \big( m_{-r,12}^2-\sgm _+(\xi _1,\xi _2)\big) \hhat{n}_+(t,\xi _3)-\big( m_{-r,12}^2-\sgm _-(\xi _1,\xi _2)\big) \hhat{n}_-(t,\xi _3)\Big) \,dt,\notag \\
&-\frac{i}{4}\int _0^\de \int _{\Sigma _4}\hhat{u}(t,\xi _1)\hhat{\bar{u}}(t,\xi _2)\big( \hhat{n}_++\hhat{n}_-\big) (t,\xi _3)\label{term_ac_quar1_R2}\\
&\quad \times \Big( \big( \sgm _+(\xi _{13},\xi _2)-\sgm _+(\xi _1,\xi _{23})\big) \hhat{n}_+(t,\xi _4)+\big( \sgm _-(\xi _{13},\xi _2)-\sgm _-(\xi _1,\xi _{23})\big) \hhat{n}_-(t,\xi _4)\Big) \,dt,\notag \\
&-\frac{i}{2}\int _0^\de \int _{\Sigma _4}|\xi _{12}|\big( \sgm _+-\sgm _-\big) (\xi _1,\xi _2)\hhat{u}(t,\xi _1)\hhat{\bar{u}}(t,\xi _2)\hhat{u}(t,\xi _3)\hhat{\bar{u}}(t,\xi _4)\,dt.\label{term_ac_quar2_R2}
\end{align}

\textbf{\underline{Estimate of \eqref{term_ac_tri_R2}}}.
We bound the multiplier by $1$ as in the periodic case.
We should consider
\eqq{\sum _{N_1\sim N_2\gec N}\sum _{N_0\lec N_1}&\frac{N_0}{N_1^2}\big( \frac{N_1}{N}\big) ^{2(1-s)}\Big( \big( \frac{N_0}{N}\big) ^{-r}+1\Big) \\
&\times \norm{P_{N_1}\psi _\de Iu}{X^{1,\frac{3}{8}+,1}_S}\norm{P_{N_2}\psi _\de Iu}{X^{1,\frac{3}{8}+,1}_S}\norm{P_{N_0}\chi _\de In_+}{X^{0,\frac{1}{4},1}_{W_+}}}
instead of \eqref{term_1}.
This is bounded by $N^{-1+}\de ^{\frac{1}{2}-}\tnorm{Iu}{X_S^{1,\frac{1}{2},1}}^2\tnorm{In_+}{X_{W_+}^{0,\frac{1}{2},1}}$ in the same manner, provided $2(1-s)-r<1$.

\textbf{\underline{Estimate of \eqref{term_ac_quar1_R2}}}.
We can obtain simpler estimate 
\eqq{\norm{un}{L^2_{t,x}}\lec \norm{u}{X^{2\e ,\frac{1}{2},1}_S}\norm{n}{X^{0,\frac{1}{2}-\e ,1}_{W_\pm}}}
instead of \eqref{hojolemma} by using Lemma~\ref{lem_bs_R2} instead of Lemma~\ref{lem_bs}.

\textbf{Case 1} ($N_2\gec N_4$).

(i) $N_1\gec N$.
In this case we need to consider the quantity
\eqq{&\sum _{N_1,N_2\gec N}\sum _{N_3\lec \bbar{N}_{12}}\sum _{N_4\lec N_2}\big( \frac{N_1}{N}\big) ^{1-s}\big( \frac{N_2}{N}\big) ^{1-s}\Big( \big( \frac{N_3}{N}\big) ^{-r}+1\Big) \Big( \big( \frac{N_4}{N}\big) ^{-r}+1\Big) \frac{1}{N_1N_2}\\
&\hx \times \Big( N_1^{2\e}\norm{Iu_1}{X^{1,\frac{1}{2},1}_S}\norm{In_3}{X^{0,\frac{1}{2}-\e ,1}_{W_+}}\Big) \Big( N_2^{2\e}\norm{Iu_2}{X^{1,\frac{1}{2},1}_S}\norm{In_4}{X^{0,\frac{1}{2}-\e ,1}_{W_+}}\Big) .}
Considering the worst case $N\lec N_1\ll N_3\sim N_4\sim N_2$, we can bound the above by $N^{-2+}\tnorm{Iu}{X_S^{1,\frac{1}{2},1}}^2\tnorm{In_+}{X_{W_+}^{0,\frac{1}{2},1}}^2$ provided $1-s-2r<1$.

(ii) $N_1\ll N$.
Make the same decomposition as \eqref{term_tochuu1}.
When $\bbar{L}_{34}=\bbar{L}_{1234}$, we use Lemma~\ref{lem_bs_R2} instead of Lemma~\ref{lem_bs} to obtain the following bound,
\eqq{\sum _{N_1\ll N}\sum _{N_2\gec N}\sum _{N_3,N_4\lec N_2}&\big( \frac{N_2}{N}\big) ^{1-s}\Big( \big( \frac{N_3}{N}\big) ^{-r}+1\Big) \Big( \big( \frac{N_4}{N}\big) ^{-r}+1\Big) N_2^{-2+}\\
&\times \norm{Iu_1}{X_S^{1,\frac{1}{2},1}}\norm{Iu_2}{X_S^{1,\frac{1}{2},1}}\norm{In_3}{X^{0,\frac{1}{2}-,1}_{W_+}}\norm{In_4}{X^{0,\frac{1}{2}-,1}_{W_+}}.
}
Even the worst case $N_2\sim N_3\sim N_4\gec N$ can be estimated with decay factor $N^{-2+}$ whenever $1-s-2r<2$.
When $\bbar{L}_{12}=\bbar{L}_{1234}\gg \bbar{L}_{34}$, we follow the argument for periodic case precisely to encounter the quantity
\eqq{\sum _{N_1\ll N}\sum _{N_2\gec N}\sum _{N_3,N_4\lec N_2}&\big( \frac{N_2}{N}\big) ^{1-s}\Big( \big( \frac{N_3}{N}\big) ^{-r}+1\Big) \Big( \big( \frac{N_4}{N}\big) ^{-r}+1\Big) N_2^{-\frac{5}{4}}\\
&\times \norm{Iu_1}{X_S^{1,\frac{1}{2},1}}\norm{Iu_2}{X^{1,\frac{1}{2},1}_S}\norm{In_3}{X_{W_+}^{0,\frac{3}{8},1}}\norm{In_4}{X_{W_+}^{0,\frac{3}{8},1}}.}
This can be treated appropriately if $1-s-2r<\frac{5}{4}$.
The decay $N^{-\frac{5}{4}+}\de ^{\frac{1}{4}-}$ is obtained.

\textbf{Case 2} ($N_2\ll N_4$).

(i) $N_1\gec N$.
With a modification of the argument for periodic case similar to Case~1~(i), we estimate
\eqq{\sum _{N_1\gec N}\sum _{N_4\gec N}\sum _{N_3\lec \bbar{N}_{14}}\sum _{N_2\ll N_4}&\big( \frac{N_2^2}{N_4^2}+\frac{1}{N_4}\big) \big( \frac{N_1}{N}\big) ^{1-s}\Big( \big( \frac{N_2}{N}\big) ^{1-s}+1\Big) \Big( \big( \frac{N_3}{N}\big) ^{-r}+1\Big) \big( \frac{N_4}{N}\big) ^{-r}\frac{1}{N_1N_2}\\
\times &\Big( N_1^{2\e}\norm{Iu_1}{X^{1,\frac{1}{2},1}_S}\norm{In_3}{X^{0,\frac{1}{2}-\e ,1}_{W_+}}\Big) \Big( N_2^{2\e}\norm{Iu_2}{X^{1,\frac{1}{2},1}_S}\norm{In_4}{X^{0,\frac{1}{2}-\e ,1}_{W_+}}\Big) .}
The worst case is $N\lec N_1\ll N_2\ll N_3\sim N_4$, which is controlled if $1-s-2r<1$.
We obtain the decay $N^{-2+}$ in this case.

(ii) $N_1\ll N$.
If $N_2^2\lec N_4$, then we have
\eqq{\sum _{N_1\ll N}\sum _{N_4\gec N}\sum _{N_3\sim N_4}\sum _{N_2\lec N_4^{1/2}}&\frac{1}{N_4}\Big( \big( \frac{N_2}{N}\big) ^{1-s}+1\Big) \big( \frac{N_3}{N}\big) ^{-r}\big( \frac{N_4}{N}\big) ^{-r}\\
&\times \norm{Iu_1}{X^{1,0+,1}_S}\norm{Iu_2}{X^{1,0+,1}_S}\norm{In_3}{X^{0,\frac{1}{2}-,1}_{W_+}}\norm{In_4}{X^{0,\frac{1}{2}-,1}_{W_+}},}
which is estimated with decay $N^{-1+}\de ^{1-}$ whenever $\frac{1}{2}(1-s)-2r<1$.
If $N_2^2\gg N_4$, we can employ the same argument as Case 1 (ii) and obtain the decay $N^{-\frac{5}{4}+}\de ^{\frac{1}{4}-}$.

\textbf{\underline{Estimate of \eqref{term_ac_quar2_R2}}}.
This is identical with the periodic case, because \eqref{term_ac_quar2_R2} includes no $n_+$.
We have the bound $(N^{-2+}+N^{-1+}\de ^{1-})\tnorm{Iu}{X^{1,\frac{1}{2},1}_S}^4$.
\end{proof}


\medskip
\subsection{Modified local well-posedness}

We begin with the following bilinear estimates.
\begin{lem}\label{lem_modifiedbe_R2}
Let $1>s>\frac{1}{2}$, $0\ge r\ge s-1$.
Then, we have
\begin{gather}
\norm{\Sc{I}_S(I^S_{s}(n_{\pm}u))}{X^{1,\frac{1}{2},1}_S(\de )}\lec \de ^{\frac{1+r}{2}-}\norm{Iu}{X^{1,\frac{1}{2},1}_S(\de )}\norm{In_{\pm}}{X^{0,\frac{1}{2},1}_{W_{\pm}}(\de )},\label{est_modifiedbe_R2_s}\\
\norm{\Sc{I}_{W_+}(|\nabla |I^{W_+}_r(u_1\bbar{u_2}))}{X^{0,\frac{1}{2},1}_{W_{\pm}}(\de )}\lec \de ^{\frac{1}{2}-}\norm{Iu_1}{X^{1,\frac{1}{2},1}_S(\de )}\norm{Iu_2}{X^{1,\frac{1}{2},1}_S(\de )}.\label{est_modifiedbe_R2_w}
\end{gather}
\end{lem}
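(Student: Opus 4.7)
The plan is to handle the wave estimate \eqref{est_modifiedbe_R2_w} immediately via the uniform boundedness of the multiplier $m_{-r,N}$, and to reduce the Schr\"odinger estimate \eqref{est_modifiedbe_R2_s} to a bilinear interpolation between the known estimates \eqref{est_be_s} and \eqref{est_endpoint_R2}.

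For \eqref{est_modifiedbe_R2_w}, since $r\le 0$ we have $0\le m_{-r,N}(\xi)\le 1$ uniformly, so the Fourier multiplier $I^{W_+}_r$ is a contraction on every $X^{0,b,p}_{W_{\pm}}$. Dropping it and applying \eqref{est_be_w} together with the trivial bound $\|u_j\|_{X^{s,1/2,1}_S}\le \|Iu_j\|_{X^{1,1/2,1}_S}$ yields the inequality at once.

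For \eqref{est_modifiedbe_R2_s}, I would adapt the strategy of Lemma~\ref{lem_modifiedbe}. Split $u=P_{\le N}u+P_{>N}u$ and $n_\pm=P_{\le N}n_\pm+P_{>N}n_\pm$. Whenever $n_\pm$ appears at low frequency, the operator $I^{W_+}_r$ acts trivially and the argument of Lemma~\ref{lem_modifiedbe} (whose proof transcribes to $\R^2$ with no change) already gives a bound with $\de^{1/2-}\le \de^{(1+r)/2-}$. The substantive contribution comes from high-frequency $n_\pm$; on this set $m_{-r,N}(\xi)\sim (N/|\xi|)^{-r}$, so $\|P_{>N}n_\pm\|_{X^{r,1/2,1}_{W_\pm}}\sim N^r\|IP_{>N}n_\pm\|_{X^{0,1/2,1}_{W_\pm}}$. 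Combining this with the pointwise relation $m_{1-s,N}(\xi)\langle \xi\rangle\sim N^{1-s}|\xi|^s$ on $|\xi|\gtrsim N$ transfers both $I$-operators onto norms without $I$'s and reduces the task to the following $I$-free bilinear estimate, valid for $r\in[-1/2,0]$:
\eqq{
\|\Sc{I}_S(nu)\|_{X^{s,1/2,1}_S(\de)}\lec \de^{(1+r)/2-}\|u\|_{X^{s,1/2,1}_S(\de)}\|n\|_{X^{r,1/2,1}_{W_\pm}(\de)}.
}
I would then establish this by bilinear complex interpolation between the $s=1$ case of \eqref{est_be_s} (the $r=0$ endpoint, with factor $\de^{1/2-}$) and \eqref{est_endpoint_R2} (the $r=-1/2$ endpoint, with factor $\de^{1/4}$), fixing $u$ and interpolating linearly in the wave slot alone so as not to disturb the $u$-regularity.

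The main obstacle will be carrying out the complex interpolation for the $X^{s,b,1}_\ast$ spaces, which are defined via an $\ell^1$-summation in modulation; the standard workaround is to interpolate the analogous $\ell^2$-modulation variant first and then recover the $\ell^1$-bound by a Littlewood-Paley summation argument. Once this step is in hand, the $N$-power bookkeeping needed to convert the $I$-free estimate into the $I$-modified one closely parallels Pecher's argument in \cite{P12}, and everything matches because $(1+r)/2$ is precisely the arithmetic interpolate of the two endpoint $\de$-exponents $1/2$ and $1/4$.
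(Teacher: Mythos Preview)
Your treatment of \eqref{est_modifiedbe_R2_w} is correct and matches the paper. The difficulty is entirely in \eqref{est_modifiedbe_R2_s}, and your proposed interpolation does not close.

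The bilinear interpolation you describe, between \eqref{est_be_s} at $s=1$ and \eqref{est_endpoint_R2}, yields the $I$-free estimate only on the one-parameter line $s=1+2r$: the output and the $u$-input both land in $X^{1-\theta}=X^{1+2r}$, with $\de$-exponent $(1+r)/2$. Your phrase ``fixing $u$ and interpolating in the wave slot alone'' does not avoid this coupling; with $u$ fixed you still need $u\in X^{1,\frac12,1}_S$ to invoke the first endpoint, so the interpolated bound carries $\|u\|_{X^{1,\frac12,1}_S}$, not $\|u\|_{X^{s,\frac12,1}_S}$, and that does not convert back to $\|Iu\|_{X^{1,\frac12,1}_S}$. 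For pairs with $r>(s-1)/2$ (which lie in the claimed range $s-1\le r\le 0$) one has $s<1+2r$, and the interpolated estimate is strictly weaker on the input side than what is needed.

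The paper resolves this by a frequency split rather than a single interpolation. When the output frequency is controlled by the $u$-frequency ($|\xi_1|\lesssim|\xi_2|$) it interpolates between \eqref{est_be_s} at $s=\tfrac12$ and \eqref{est_endpoint_R2}, landing on the line $s'=r+\tfrac12$; since the hypothesis $r\le 0$ forces $s\ge s'$, the missing $\langle\xi_1\rangle^{s-s'}$ can be transferred to $\langle\xi_2\rangle^{s-s'}$. In the complementary regime $|\xi_1|\gg|\xi_2|$ no such transfer is available (the output frequency dominates), and interpolation of the bilinear estimates is not enough: the paper instead uses an interpolation of the trilinear building blocks, Lemmas~\ref{prop_te_highlow} and \ref{prop_te_highlow2}, together with a case analysis on $L_{\max}$, to obtain the required decay. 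This high--low interaction is exactly the case your sketch omits, and it is where the constraint $r\ge s-1$ is actually used.
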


\begin{proof}
\eqref{est_modifiedbe_R2_w} follows easily from \eqref{est_be_w}, $I^{W_+}_r\le 1$, and $\tnorm{u}{X^{s,\frac{1}{2},1}_S}\le \tnorm{Iu}{X^{1,\frac{1}{2},1}_S}$.
We thus focus on \eqref{est_modifiedbe_R2_s}.
First of all, we show
\eq{est_be_R2}{\norm{\Sc{I}_S(n_{\pm}u)}{X^{s,\frac{1}{2},1}_S(\de )}\lec \de ^{\frac{1+r}{2}-}\norm{u}{X^{s,\frac{1}{2},1}_S(\de )}\norm{n_{\pm}}{X^{r,\frac{1}{2},1}_{W_{\pm}}(\de )}.}
Use $\ze _0$, $\ze _1$, $\ze _2$ for the Fourier variables of $n_\pm$, $n_\pm u$, $u$, respectively (thus $\ze _0=\ze _1-\ze _2$).

(i) The case $|\xi _1|\lec |\xi _2|$.
Since $s\ge r+\frac{1}{2}$, \eqref{est_be_R2} is reduced to
\eqq{\norm{\Sc{I}_S(n_{\pm}u)}{X^{r+\frac{1}{2},\frac{1}{2},1}_S(\de )}\lec \de ^{\frac{1+r}{2}-}\norm{u}{X^{r+\frac{1}{2},\frac{1}{2},1}_S(\de )}\norm{n_{\pm}}{X^{r,\frac{1}{2},1}_{W_{\pm}}(\de )}.}
It is not difficult to obtain this by interpolation between \eqref{est_endpoint_R2} and \eqref{est_be_s} with $s=\frac{1}{2}$.

(ii) The case $|\xi _1|\gg |\xi _2|$.
An interpolation between Lemmas~\ref{prop_te_highlow} and \ref{prop_te_highlow2} implies
\eqq{\iint _{\ze _0=\ze _1-\ze _2}f(\ze _0)g_1(\ze _1)g_2(\ze _2)\lec L_{\max}^{\frac{1}{2}}(L_{\med}L_{\min})^{\frac{1-r}{4}+}N_2^{1+r-}N_1^{-1}\norm{f}{L^2}\norm{g_1}{L^2}\norm{g_2}{L^2}}
for $f,g_1,g_2\in L^2_{\ze}(\R \times \R^2)$ with 
\eqq{\Supp{f}{\FR{P}_{N_0}\cap \FR{W}_{L_0}^\pm},\quad \Supp{g_j}{\FR{P}_{N_j}\cap \FR{S}_{L_j}},\quad j=1,2,\quad N_1\gg N_2.}
(We can choose $1+r->\frac{1}{2}$ because $r>-\frac{1}{2}$.
Note that $L_{\max}\gec N_1^2$ is required for nonzero contribution under this assumption.)
To apply this, we have to decompose $\Sc{I}_S(n_{\pm}u)$ as
\eqq{\sum _{N_1\ge 1}\sum _{N_2\ll N_1}\sum _{N_0\sim N_1}\sum _{L_0,L_1,L_2\ge 1}\Sc{I}_SP^S_{N_1,L_1}(P^{W_{\pm}}_{N_0,L_0}n_{\pm}P^S_{N_2,L_2}u).}
If $L_0=L_{\max}$ (similar for the case $L_2=L_{\max}$), we use the above estimate and Lemma~\ref{lem_linear} to obtain
\eqq{&\norm{P_{N_1}\Sc{I}_S(P_{N_0}n_{\pm}\cdot u)}{X^{s,\frac{1}{2},1}_S(\de )}\lec \de ^{\frac{1+r}{4}-}N_1^s\sum _{N_2\ll N_1}\norm{P_{N_1}(P_{N_0}n_{\pm}\cdot \psi _\de P_{N_2}u)}{X^{0,-\frac{1-r}{4}-,\I}_S}\\
&\lec \de ^{\frac{1+r}{4}-}N_1^{s-1}N_0^{-r}\sum _{N_2\ll N_1}N_2^{1+r-s-}\norm{\psi _\de P_{N_2}u}{X^{s,\frac{1-r}{4}+,1}_S}\norm{P_{N_0}n_{\pm}}{X^{r,\frac{1}{2},1}_{W_{\pm}}}\\
&\lec \de ^{\frac{1+r}{2}-}\norm{u}{X^{s,\frac{1}{2},1}_S}\norm{P_{N_0}n_{\pm}}{X^{r,\frac{1}{2},1}_{W_{\pm}}},
}
where at the last inequality we have used the assumption $1+r-s\ge 0$.
Squaring and summing up the above in $N_1$ we obtain \eqref{est_be_R2} (note that $N_0\sim N_1$).
In the case $L_1=L_{\max}$, a similar argument yields
\eqq{&\norm{P_{N_1}\Sc{I}_S(P_{N_0}n_{\pm}\cdot u)}{X^{s,\frac{1}{2},1}_S(\de )}\lec N_1^s\sum _{L_1}L_1^{-\frac{1}{2}}\sum _{N_2\ll N_1}\norm{P^S_{N_1,L_1}(\psi _\de P_{N_0}n_{\pm}\cdot \psi _\de P_{N_2}u)}{L^2_{t,x}}\\
&\lec N_1^{s-1}N_0^{-r}\sum _{L_0,L_1,L_2}(L_0L_2)^{\frac{1-r}{4}+}\sum _{N_2\ll N_1}N_2^{1+r-s-}\cdot N_2^s\norm{\psi _\de P^S_{N_2,L_2}u}{L^2_{t,x}}\cdot N_0^r\norm{\psi _\de P^{W_{\pm}}_{N_0,L_0}n_{\pm}}{L^2_{t,x}}.
}
We can carry out the sum in $L_1$ using the fact $L_1\sim \max \shugo{\bbar{L}_{02},\,N_1^2}$, and have the same bound as the previous case.
This completes the proof of \eqref{est_be_R2}.

To upgrade \eqref{est_be_R2} to \eqref{est_modifiedbe_R2_s}, we only have to show
\eqq{m_{1-s,N}(\xi _1)\LR{\xi _1}^{1-s}\lec m_{1-s,N}(\xi _2)\LR{\xi _2}^{1-s}\cdot m_{-r,N}(\xi _0)\LR{\xi _0}^{-r}}
for $\xi _0,\xi _1,\xi _2$ such that $\xi _0=\xi _1-\xi _2$.
This is true for the case $|\xi _1|\lec |\xi _2|$ or the case $|\xi _2|\gec N$, because if $q\ge 0$ we have $m_{q,N}(\xi )\LR{\xi}^q\ge 1$, $m_{q,N}(\xi _1)\LR{\xi_1}^q\lec m_{q,N}(\xi _2)\LR{\xi_2}^q$ for $|\xi _1|\lec |\xi _2|$, and $m_{q,N}(\xi )\LR{\xi}^q\sim m_{q,N}(\xi )|\xi |^q=N^q$ for $|\xi |\ge 2N$.

In the remaining case, $|\xi _2|\ll |\xi _1|$ and $|\xi _2|\ll N$, we have $|\xi _0|\sim |\xi _1|$ and then 
\eqq{m_{1-s,N}(\xi _1)\sim m_{1-s,N}(\xi _2)m_{1-s,N}(\xi _0)\lec m_{1-s,N}(\xi _2)m_{-r,N}(\xi _0),}
since $1-s\ge -r$.
This and \eqref{est_be_s} with $s=1$ imply \eqref{est_modifiedbe_R2_s}.
\end{proof}

By a standard argument, we can deduce from Lemma~\ref{lem_modifiedbe_R2} the following local well-posedness.
\begin{prop}\label{prop_modifiedlwp_R2}
Let $1>s>\frac{1}{2}$, $0\ge r\ge s-1$.
Then, for any $(u_0,n_{+0})\in H^s\times H^r$, there exists a unique solution to \eqref{ZH2} on $\R^2$, $(u,n_+)\in X^{s,\frac{1}{2},1}_S(\de )\times X^{r,\frac{1}{2},1}_{W_+}(\de )$, with the existence time
\eqq{\de \sim (\norm{Iu_0}{H^1}+\norm{In_{+0}}{L^2})^{-\frac{2}{1+r}-},}
such that the following estimate holds:
\eqq{\norm{Iu}{X^{1,\frac{1}{2},1}_S(\de )}+\norm{In_+}{X^{0,\frac{1}{2},1}_{W_+}(\de )}\lec \norm{Iu_0}{H^1}+\norm{In_{+0}}{L^2}.}
In particular, we have
\eqq{\sup _{-\de \le t\le \de}\big( \norm{Iu(t)}{H^1}+\norm{In_+(t)}{L^2}\big) \lec \norm{Iu_0}{H^1}+\norm{In_{+0}}{L^2}.}
\end{prop}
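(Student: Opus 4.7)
The plan is a standard contraction-mapping argument in the space
$Y(\de):=\{(u,n_+):\tnorm{Iu}{X^{1,\frac12,1}_S(\de)}+\tnorm{In_+}{X^{0,\frac12,1}_{W_+}(\de)}\le C_0R\}$, where $R:=\tnorm{Iu_0}{H^1}+\tnorm{In_{+0}}{L^2}$ and $C_0$ is an absolute constant to be chosen. Apply $I^S_{s,N}$ to the Schr\"odinger equation of \eqref{ZH2} and $I^{W_+}_{r,N}$ to the wave equation, then recast the system in Duhamel form:
\eqq{Iu(t)&=e^{it\Delta}Iu_0-\tfrac{i}{2}\Sc{I}_SI^S_{s,N}\bigl((n_++n_-)u\bigr),\\
In_+(t)&=e^{-it|\nabla|}In_{+0}+\Sc{I}_{W_+}I^{W_+}_{r,N}\bigl(|\nabla|(u\bar u)\bigr).}
Define $\Phi(u,n_+)$ as the right-hand side of this system. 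The goal is to show that $\Phi$ maps $Y(\de)$ into itself and is a contraction for a suitable $\de$.

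The linear parts are controlled directly by \eqref{est_lin_sol} of Lemma~\ref{lem_linear}, giving $\tnorm{\psi_\de e^{it\Delta}Iu_0}{X^{1,\frac12,1}_S}\lec \tnorm{Iu_0}{H^1}$ and likewise for the wave half. For the Duhamel terms I combine \eqref{est_lin_duh} with the two modified bilinear estimates \eqref{est_modifiedbe_R2_s} and \eqref{est_modifiedbe_R2_w} from Lemma~\ref{lem_modifiedbe_R2}, obtaining
\eqq{\norm{\Phi(u,n_+)}{Y(\de)}\lec R+\de^{\frac{1+r}{2}-}\norm{Iu}{X^{1,\frac12,1}_S(\de)}\norm{In_+}{X^{0,\frac12,1}_{W_+}(\de)}+\de^{\frac12-}\norm{Iu}{X^{1,\frac12,1}_S(\de)}^2.}
Since $r\le 0$, the Schr\"odinger bilinear estimate contributes the smaller exponent $\tfrac{1+r}{2}$, so choosing $\de\sim R^{-\frac{2}{1+r}-}$ makes the nonlinear contribution $\le\frac12 C_0R$ and gives the self-mapping property. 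The same pair of bilinear estimates, applied to the difference $\Phi(u,n_+)-\Phi(u',n'_+)$, yields the corresponding contraction inequality with constant $<1$ on $Y(\de)$ for the same choice of $\de$. Banach's fixed point theorem then produces the unique solution with the claimed a priori bound.

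The only non-routine step is the verification that Lemma~\ref{lem_modifiedbe_R2} is indeed applicable to close the iteration---this has been done just above---so the remainder is mechanical. The supremum bound $\sup_{|t|\le\de}(\tnorm{Iu(t)}{H^1}+\tnorm{In_+(t)}{L^2})\lec R$ follows from the embedding $X^{s,\frac12,1}\hookrightarrow C_tH^s$ together with the $X^{s,\frac12,1}$ bound just obtained. The only mild subtlety to record is that uniqueness must be stated at the level of the restriction norm $Y(\de)$ (as in the standard $X^{s,b}$ theory), which is automatic from the contraction argument. No obstacle is anticipated, as the hard analytic work is entirely concentrated in Lemma~\ref{lem_modifiedbe_R2} and has already been carried out.
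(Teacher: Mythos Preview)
Your proposal is correct and follows exactly the ``standard argument'' the paper invokes: a contraction mapping in the $I$-modified norms using the linear estimates \eqref{est_lin_sol}, \eqref{est_lin_duh} of Lemma~\ref{lem_linear} together with the bilinear estimates \eqref{est_modifiedbe_R2_s}, \eqref{est_modifiedbe_R2_w} of Lemma~\ref{lem_modifiedbe_R2}, with the existence time dictated by the smaller power $\de^{\frac{1+r}{2}-}$ from the Schr\"odinger bilinear estimate. The paper gives no further details beyond this, so there is nothing to compare.
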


We remark that our local existence time $\de \sim \tnorm{\text{data}}{}^{-\frac{2}{1+r}-}$ is longer than that obtained in \cite{P12}, which was $\de \sim \tnorm{\text{data}}{}^{-\frac{2}{1+2r}-}$.
Compare the bilinear estimate \eqref{est_modifiedbe_R2_s} with Lemma~2.1 in \cite{P12}.
In fact, a longer local existence time will lead to the global well-posedness for a lower regularity.

\medskip
\subsection{Proof of Theorem~\ref{thm_global_R2}}

Here we assume
\begin{gather}
1>s>\tfrac{1}{2},\qquad 0\ge r\ge s-1,\label{assumption1}\\
r>1-2s,\qquad r>-\tfrac{1}{2}s.\label{assumption2}
\end{gather}
Let $(u_0,n_{0+})\in H^s \times H^r$ be an initial datum with $\tnorm{u_0}{L^2}<\tnorm{Q}{L^2(\R^2)}$.
The modified energy $H(Iu,In_+)(t)$, satisfying the initial bound
\eqq{H(Iu_0,In_{+0})\le C(N^{2(1-s)}+N^{-2r})\le C_0N^{2(1-s)},}
controls $\tnorm{Iu(t)}{H^1}+\tnorm{In_+(t)}{L^2}$.
Proposition~\ref{prop_modifiedlwp_R2} shows that the solution on $[0,t_0]$ can be extended up to $t=t_0+\de$ with a uniform time $\de \sim N^{-\frac{2(1-s)}{1+r}-}$ and satisfies
\eqq{\norm{Iu(\cdot -t_0)}{X^{1,\frac{1}{2},1}_S(\de )}+\norm{In_+(\cdot -t_0)}{X^{0,\frac{1}{2},1}_{W_+}(\de )}\lec N^{1-s},}
as long as $H(Iu,n_{+})(t_0)\le 2C_0N^{2(1-s)}$.
If we could iterate the local theory $M$ times, then from Propositions~\ref{prop_fixedtime} and \ref{prop_ac_R2},
\eqq{&|H(Iu,n_+)(M\de )-H(Iu,n_+)(0)|\\
&\lec N^{-1+}(N^{1-s})^3+M\Big\{ N^{-1+}\de ^{\frac{1}{2}-}(N^{1-s})^3+(N^{-2+}+N^{-\frac{5}{4}+}\de ^{\frac{1}{4}-}+N^{-1+}\de ^{1-})(N^{1-s})^4\Big\} \\
&\sim \Big\{ N^{-s+}+MN^{-\al _0(s,r)+}\Big\} N^{2(1-s)},\qquad \al _0(s,r):=\min \shugo{\tfrac{1+rs}{1+r},\, \tfrac{-1-3r+6s+8rs}{4(1+r)}}.}
Thus, we can repeat the local procedure $O(N^{\al _0-})$ times to reach some time $\sim \de N^{\al _0-}\sim N^{\al _1-}$,
\eqq{\al _1(s,r):=\min \shugo{\tfrac{-1+2s+rs}{1+r},\, \tfrac{-9-3r+14s+8rs}{4(1+r)}}.}
The required conditions for global well-posedness are
\begin{gather}
-1+2s+rs>0,\label{assumption3}\\
-9-3r+14s+8rs>0.\label{assumption4}
\end{gather}
It turns out that \eqref{assumption2} and \eqref{assumption3} are automatically satisfied under the assumptions \eqref{assumption1} and \eqref{assumption4}.
Moreover, we have
\eqs{\sup _{-T\le t\le T}\big( \norm{Iu(t)}{H^1}+\norm{In_+(t)}{L^2}\big) \lec N^{1-s}\sim T^{\frac{1-s}{\al _1}+}\sim T^{\al _2+},\\
\al _2(s,r):=\max \shugo{\tfrac{(1-s)(1+r)}{-1+2s+rs},\, \tfrac{4(1-s)(1+r)}{-9-3r+14s+8rs}}.}
We obtain the same a priori estimate for solutions to the original equation \eqref{ZH}, concluding the proof of Theorem~\ref{thm_global_R2}.


\bigskip
\appendix
\section{Proof of Lemma~\ref{lem_BEforwave}}

Here we shall give a proof of the following bilinear estimate.
\begin{prop}\label{prop_saigo}
We have
\eqq{\norm{uv}{L^2_{t,x}}\lec L^{\frac{3}{4}}N^{\frac{3}{4}}\norm{u}{L^2_{t,x}}\norm{v}{L^2_{t,x}}}
for $u,v\in L^2(\R \times Z)$, $Z=\Tg^2$ or $\R^2$, such that $\mathrm{supp}~\ti{u},\Supp{\ti{v}}{\FR{P}_{N}\cap \FR{W}^{+}_{L}}$.
\end{prop}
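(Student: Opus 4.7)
My plan is to prove the bilinear estimate by reducing it, via Plancherel's theorem and the Cauchy--Schwarz inequality, to a geometric measure estimate on the intersection of shifted Fourier supports.

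First, Plancherel's identity gives $\tnorm{uv}{L^2_{t,x}} \sim \tnorm{\ti u\ast \ti v}{L^2_{\tau,k}}$, with a constant depending on the normalization in $Z$. For each $(\tau,k)$ the convolution integrand $\ti u(\tau_1,k_1)\ti v(\tau-\tau_1,k-k_1)$ is supported over
\eqq{E(\tau,k):=\Shugo{(\tau_1,k_1)}{(\tau_1,k_1)\in \operatorname{supp}\ti u,\ (\tau-\tau_1,k-k_1)\in \operatorname{supp}\ti v}.}
Applying the Cauchy--Schwarz inequality pointwise in the convolution and then integrating in $(\tau,k)$ reduces matters to the uniform estimate $\sup_{\tau,k}|E(\tau,k)|\lec L^{3/2}N^{3/2}$.

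Next, for fixed $k_1$ the two modulation conditions $|\tau_1+|k_1||\lec L$ and $|\tau-\tau_1+|k-k_1||\lec L$ specify two intervals in $\tau_1$ of length $\lec L$ whose intersection is nonempty only when $|\tau+|k_1|+|k-k_1||\lec L$. Therefore $|E(\tau,k)|\lec L\cdot |A(\tau,k)|$, where
\eqq{A(\tau,k):=\Shugo{k_1}{|k_1|\sim N,\ |k-k_1|\sim N,\ |\phi(k_1)+\tau|\lec L}}
with $\phi(k_1):=|k_1|+|k-k_1|$, and $|\cdot|$ denoting 2D Lebesgue measure on $\R^2$ or cardinality on $\Zg^2$. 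It remains to prove $|A(\tau,k)|\lec L^{1/2}N^{3/2}$.

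The level sets $\shugo{\phi=c}$ are ellipses with foci $0$ and $k$, so $A$ is a thin elliptic shell. On $\R^2$, I would parametrize $k_1$ (off the focal axis) by $(r_1,r_2):=(|k_1|,|k-k_1|)$, a two-to-one map with Jacobian $r_1r_2/(|k|\,y)$, where $4|k|^2y^2=(|k|^2-(r_1-r_2)^2)\bigl((r_1+r_2)^2-|k|^2\bigr)$. Changing variables and bounding the regular factor $r_1r_2$ by $N^2$ reduces the area of $A$ to $N^2$ times an integral of the form $\int ds/\sqrt{s^2-|k|^2}$ over an $s$-interval of length $\sim L$. Splitting into the regimes $|k|\ll N$ (which gives $|A|\lec NL$), $|k|\sim N$ (which gives $|A|\lec N^2\sqrt{L/N}=L^{1/2}N^{3/2}$ thanks to the integrable square-root singularity near $s=|k|$), together with the trivial bound $|A|\lec N^2$ in the range $L\gec N$, yields the desired uniform estimate. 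For $Z=\Tg^2$, the same bound on $\#(A\cap \Zg^2)$ follows from the $\R^2$ area bound combined with a standard count of $\Zg^2$-points lying in a thickened rectifiable curve of length $O(N)$.

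The principal obstacle lies in the case $|k|\sim N$, where $|\nabla\phi|^2=(\phi^2-|k|^2)/(|k_1||k-k_1|)$ degenerates as $k_1$ approaches the segment $[0,k]$ and the level ellipse collapses to its major axis. The explicit $(r_1,r_2)$-parametrization exhibits this degeneracy as an integrable square-root type singularity, which is exactly what produces the sharp $L^{1/2}$ gain in $|A|$ and hence the exponent $\tfrac34$ in the proposition; a more cavalier estimate based only on the length of the level curve would give the strictly weaker bound $|A|\lec LN$ when $|k|\sim N$ and $L\gtrsim N$, falling short of the target.
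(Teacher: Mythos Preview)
Your reduction via Plancherel and Cauchy--Schwarz to the measure/counting bound $|A(\tau,k)|\lesssim L^{1/2}N^{3/2}$ is the same standard reduction the paper invokes. Where you diverge is in the geometry of $A$. The paper covers the elliptic annulus by unit squares through a five-case geometric argument, the hardest case ($|k|\gg 1$, $|k|+10L\le A\le 10N$) requiring an explicit lemma (Lemma~\ref{lem_ellipse}) controlling how much an ellipse grows when one takes its $1$-neighborhood. Your approach instead computes the $\R^2$-area of $A$ directly via the bipolar change of variables $(r_1,r_2)=(|k_1|,|k-k_1|)$, which factorizes the Jacobian into the product of two one-dimensional integrals with integrable endpoint singularities; this is more analytic and makes the origin of the $L^{1/2}$ gain transparent. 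For the periodic case you then pass from area to lattice count by a perimeter correction: since $A$ is the difference of two convex regions (confocal ellipses, possibly intersected with disks), the number of $\Zg^2$-points differs from the area by $O(\text{perimeter})=O(N)\le O(N^{3/2}L^{1/2})$. This is correct, though your phrase ``thickened rectifiable curve of length $O(N)$'' undersells the mechanism---it is convexity of the bounding curves that makes the perimeter bound immediate, and you should say so. The paper's Lemma~\ref{lem_ellipse} is in effect a hands-on proof of this same perimeter correction, specialized to ellipses; your route is shorter once the elementary convex lattice-point fact is granted.

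One minor remark: your closing sentence about the ``cavalier estimate'' yielding only $|A|\lesssim LN$ in the regime $|k|\sim N$, $L\gtrsim N$ is garbled, since there the trivial bound $|A|\lesssim N^2\le N^{3/2}L^{1/2}$ already suffices. This does not affect the proof itself.
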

Lemma~\ref{lem_BEforwave} then follows by letting $v=u$.
The standard argument reduces the problem to the following; for details, see \mbox{e.g.} the proof of Lemma~2.5 in \cite{K11}.
\begin{prop}
Let $N,L\ge 1$.
Then, for any $k\in \R^2$ and $A\ge |k|$, the set
\eqq{\Shugo{k'\in \R^2}{|k'|\le N,\,|k-k'|\le N,\,|k'|+|k-k'|\in [A,A+L]}}
is covered with at most $O(N^{\frac{3}{2}}L^{\frac{1}{2}})$ squares of unit size.
\end{prop}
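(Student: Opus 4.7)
The plan is to exploit the geometry of the elliptic annulus directly: establish a pointwise estimate on the vertical section of the set at each $u$, integrate to bound its area, and then convert to a unit-square count via a perimeter correction.

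First I may assume $L\le N$, since otherwise the set lies in the disk of radius $N$ about the origin and the trivial count $O(N^2)$ already beats the claimed $O(N^{3/2}L^{1/2})$. Choose coordinates so that $k=(|k|,0)$, and set $u=x-|k|/2$, $v=y$. The ellipse $|k'|+|k-k'|=s$ becomes $u^2/(s/2)^2+v^2/((s^2-|k|^2)/4)=1$, which I would solve explicitly as
\[ v^2=V(u,T),\qquad V(u,T):=\frac{(T-4u^2)(T-|k|^2)}{4T},\qquad T:=s^2. \]
The key computation is the monotonicity estimate
\[ \partial_T V(u,T)=\frac{T^2-4u^2|k|^2}{4T^2}\in[0,\,1/4], \]
whose upper bound uses both $4u^2\le T$ (needed for $v^2\ge 0$) and $|k|^2\le T$ (from $A\ge|k|$). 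Consequently, for each fixed $u$ the annulus meets the vertical line at $\{v:v_1(u)\le|v|\le v_2(u)\}$ with $v_2(u)^2=V(u,(A+L)^2)$ and $v_1(u)^2=\max\{V(u,A^2),\,0\}$, and the crucial pointwise bound is
\[ v_2(u)-v_1(u)\le\sqrt{v_2(u)^2-v_1(u)^2}\le \frac{1}{2}\sqrt{(A+L)^2-A^2}=O(\sqrt{NL}). \]

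With $I$ the (truncated) $u$-range, $|I|\le 2N$, integrating the pointwise bound gives
\[ |E|=2\int_I (v_2-v_1)\,du\le 2|I|\cdot O(\sqrt{NL})=O(N^{3/2}L^{1/2}). \]
To convert the area into a unit-square count I would invoke the standard fact that a bounded set $K\subset\R^2$ with rectifiable boundary is covered by at most $|K|+O(\operatorname{length}(\partial K)+1)$ axis-aligned unit squares (interior squares are counted by $|K|$; boundary squares by the number of grid lines that $\partial K$ crosses). The boundary of $E$ is a union of arcs of the two bounding ellipses and of the circles $|k'|=N$, $|k-k'|=N$, each of length $O(N)$; combined with $L\ge 1$ (whence $N\le N^{3/2}L^{1/2}$) this yields the total count $O(N^{3/2}L^{1/2})$.

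The main obstacle is finding the right pointwise estimate: the naive approach via the inverse function theorem applied to $f(v):=\sqrt{x^2+v^2}+\sqrt{(x-|k|)^2+v^2}$ blows up as $|f'(v)|\to 0$ near the focal segment, because the normal thickness of the annulus becomes very large there. Passing to the square $v^2$, which depends \emph{polynomially} on $T=s^2$ via $V(u,T)$, absorbs this singularity, and the universal bound $\partial_T V\le 1/4$ then does the rest with no case analysis.
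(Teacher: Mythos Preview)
Your argument is correct and takes a genuinely different, cleaner route than the paper. The paper proceeds by a five-case analysis on the relative sizes of $L$, $N$, $|k|$, and $A$: after the trivial case $L\gtrsim N$ it treats separately (ii) $|k|\lesssim 1$, (iii) $A\le |k|+10L$ (thin ellipse), (iv) $A\ge 10N$ (near-annulus), and (v) the generic intermediate range, the last of which relies on a preparatory lemma asserting that no unit square can meet both an ellipse $E_<$ and a suitably dilated $E_>$; the count is then obtained by subtracting areas of two ellipses and estimating each term. You bypass all of this by writing $v^2=V(u,T)$ explicitly and observing the uniform bound $\partial_T V\le 1/4$, which in one stroke gives the vertical thickness $v_2-v_1=O(\sqrt{NL})$ for every $u$, uniformly in $|k|$ and $A$; the constraint $A\le 2N$ (automatic, since $|k'|+|k-k'|\le 2N$) makes this work without further splitting. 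Your area-plus-perimeter conversion is also more transparent than the paper's lattice-polygon lemma. One small point worth making explicit in a write-up: the inequality $v_2\ge v_1$ (needed for $v_2-v_1\le\sqrt{v_2^2-v_1^2}$) uses the \emph{lower} bound $\partial_T V\ge 0$, which in turn requires $T\ge 4u^2$ and $T\ge |k|^2$; both hold on the relevant interval once $V(u,A^2)\ge 0$, while the case $V(u,A^2)<0$ is handled trivially since then $v_1=0$.
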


We begin with preparing the following lemma.
\begin{lem}\label{lem_ellipse}
Let $a\ge b\gg 1$.
Define
\eqq{E_<&:=\Shugo{(x,y)\in \R^2}{\frac{x^2}{a^2}+\frac{y^2}{b^2}\le 1},\\
E_>&:=\Shugo{(x,y)\in \R^2}{\frac{x^2}{(a+100\frac{a}{b})^2}+\frac{y^2}{(b+100)^2}\ge 1}.}
Then, there exists no unit square in $\R^2$ intersecting with both $E_<$ and $E_>$.
The same holds for 
\eqq{E'_<&=\Shugo{(x,y)\in \R^2}{\frac{x^2}{(a-100\frac{a}{b})^2}+\frac{y^2}{(b-100)^2}\le 1},\\
E'_>&=\Shugo{(x,y)\in \R^2}{\frac{x^2}{a^2}+\frac{y^2}{b^2}\ge 1}}
instead of $E_<$, $E_>$.
\end{lem}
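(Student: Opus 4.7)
The idea is to show that the Euclidean distance between $E_<$ and $E_>$ strictly exceeds $\sqrt{2}$, the diameter of a unit square; then no unit square can have a point in each. I would first reduce the problem to bounding $d(\partial E_<,E_>)$: the set $E_<$ lies strictly inside the outer ellipse (since $100>0$), so for $p\in E_<$ and $q\in E_>$ the segment $\overline{pq}$ begins inside the inner ellipse and ends outside it (the inner ellipse is contained in the open interior of the outer ellipse, while $q$ lies on or outside the latter). Hence $\overline{pq}$ crosses $\partial E_<$ at some point $p_0$, and $|p-q|\ge|p_0-q|\ge d(\partial E_<,E_>)$.

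The core computation is the following: for $(x_0,y_0)=(a\cos\theta,b\sin\theta)\in\partial E_<$ and any $(\xi,\eta)\in\R^2$ with $\xi^2+\eta^2\le 2$, the shifted point $(x_0+\xi,y_0+\eta)$ lies strictly inside the outer ellipse. Since $(a+100a/b)^2=a^2(1+100/b)^2$ and $(b+100)^2=b^2(1+100/b)^2$, this reduces to showing
\eqq{(\cos\theta+\xi/a)^2+(\sin\theta+\eta/b)^2<(1+100/b)^2.}
Expanding the left side, applying the Cauchy--Schwarz inequality to the cross term $2(\xi\cos\theta/a+\eta\sin\theta/b)$, and using $a\ge b$ together with $\xi^2+\eta^2\le 2$, I obtain the upper bound $1+2\sqrt{2}/b+2/b^2$. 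The right side equals $1+200/b+10000/b^2$, so the inequality holds with an enormous margin for every $b\ge 1$.

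The symmetric claim for $E'_<,E'_>$ follows by applying the same method to the ellipse $\partial E'_>$ (the one with semi-axes $a,b$): if $(x_0,y_0)=(a\cos\theta,b\sin\theta)\in\partial E'_>$ and $\xi^2+\eta^2\le 2$, the shifted point must lie strictly outside $\partial E'_<$. Setting $t:=\sqrt{(\xi/a)^2+(\eta/b)^2}\le\sqrt{2}/b$, the Cauchy--Schwarz inequality now delivers the \emph{lower} bound
\eqq{(\cos\theta+\xi/a)^2+(\sin\theta+\eta/b)^2\ge 1-2t+t^2=(1-t)^2\ge(1-\sqrt{2}/b)^2,}
and this strictly exceeds $(1-100/b)^2$ for $b>100$, in particular under $b\gg 1$. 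The whole argument is essentially bookkeeping with constants; the choice of the factor $100$ dominates $\sqrt{2}$ so comfortably that no delicate step is required, and the $b\gg 1$ hypothesis absorbs the few terms of lower order.
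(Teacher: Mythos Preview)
Your proof is correct and in fact cleaner than the paper's. The key observation you make---that the outer ellipse has semi-axes $(a(1+100/b),\,b(1+100/b))$ and is therefore a \emph{dilation} of the inner ellipse by the factor $1+100/b$ (and analogously $1-100/b$ in the second claim)---is not exploited in the paper. This lets you parametrize $\partial E_<$ and reduce the whole question to the single scalar inequality $(\cos\theta+\xi/a)^2+(\sin\theta+\eta/b)^2<(1+100/b)^2$, which follows immediately from Cauchy--Schwarz. The paper instead argues by contradiction directly on two points $(x,y)\in E_<$, $(x',y')\in E_>$ with $(x-x')^2+(y-y')^2\le 2$, expands the difference $\frac{x'^2}{(a+100a/b)^2}+\frac{y'^2}{(b+100)^2}-\frac{x^2}{a^2}-\frac{y^2}{b^2}$ term by term, and bounds each piece using $|x|\le a$, $|y|\le b$ to force the sum negative. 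Both arguments are elementary; yours is shorter and makes the geometric reason transparent, while the paper's would still work if the two ellipses were not homothetic.
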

\begin{proof}
We only prove the first half of the claim.
The second half will be shown by a similar argument.

Assume for contradiction that there existed such a square of side length $1$.
Then, it would hold for some $(x,y)\in E_<$ and $(x',y')\in E_>$ that
\begin{gather}
(x-x')^2+(y-y')^2\le 2,\label{1}\\
\frac{x^2}{a^2}+\frac{y^2}{b^2}\le \frac{x'^2}{(a+100\frac{a}{b})^2}+\frac{y'^2}{(b+100)^2}\label{2}.
\end{gather}
Note that
\eqq{\frac{x'^2}{(a+100\frac{a}{b})^2}-\frac{x^2}{a^2}&=\frac{x'^2-x^2}{a^2}-x'^2\Big( \frac{1}{a^2}-\frac{1}{(a+100\frac{a}{b})^2}\Big) \\
&=\frac{x'+x}{a^2}(x'-x)-\frac{x'^2}{(a+100\frac{a}{b})^2}\Big( \frac{200}{b}+(\frac{100}{b})^2\Big) ,\\
\frac{y'^2}{(b+100)^2}-\frac{y^2}{b^2}&=\frac{y'^2-y^2}{b^2}-y'^2\Big( \frac{1}{b^2}-\frac{1}{(b+100)^2}\Big) \\
&=\frac{y'+y}{b^2}(y'-y)-\frac{y'^2}{(b+100)^2}\Big( \frac{200}{b}+(\frac{100}{b})^2\Big) .}
From these estimates and the fact $(x',y')\in E_>$,
\begin{alignat*}{2}
\frac{x'^2}{(a+100\frac{a}{b})^2}&+\frac{y'^2}{(b+100)^2}-\frac{x^2}{a^2}-\frac{y^2}{b^2}\\
&\le \frac{|x'+x|}{a^2}|x'-x|+\frac{|y'+y|}{b^2}|y'-y|-\Big( \frac{200}{b}+(\frac{100}{b})^2\Big) ,
\intertext{which is, from \eqref{1} and $(x,y)\in E_<$,}
&\le \frac{2|x|+\sqrt{2}}{a^2}\sqrt{2}+\frac{2|y|+\sqrt{2}}{b^2}\sqrt{2}-\frac{200}{b}\\
&\le \frac{10}{a}+\frac{10}{b}-\frac{200}{b}\le -\frac{180}{b}<0.
\end{alignat*}
This contradicts \eqref{2}.
\end{proof}

\begin{proof}[Proof of Proposition~\ref{prop_saigo}]
We may assume $|k|\le 2N$, otherwise the set is empty.
Treat several cases separately.

(i) $L\gec N$.
In this case, we use the condition $|k'|\le N$ to estimate the number of squares by $N^2\lec N^{\frac{3}{2}}L^{\frac{1}{2}}$.

(ii) $L\ll N$, $|k|\lec 1$.
In this case we have $|k'|\le N$ and $A-C\le 2|k'|\le A+L+C$.
It is easy to see that such a region, which is a disk of radius $L$ or the intersection of a disk of radius $N$ and an annulus of width $L$, can be covered with $\lec NL$ unit squares.
$L\lec N$ implies the claim.

(iii) $L\ll N$, $A\le |k|+10L$.
We have 
\eqq{|k'|+|k-k'|\le |k|+11L,}
which shows that $k'$ is inside an ellipse of distance between foci $|k|$, length of long axis
\eqq{|k|+11L\lec N,}
and length of short axis
\eqq{\sqrt{(|k|+11L)^2-|k|^2}=\sqrt{22|k|L+121L^2}\lec \sqrt{NL}.}
Therefore, we can cover this region with $\lec N\times \sqrt{NL}$ unit squares.

We remark that $k'$ is confined to the region 
\eqq{\Sc{R}:=\Shugo{k'\in \R^2}{|k'|+|k-k'|\in [A,A+L]}}
between two ellipses with common foci $0$, $k$, longer axis $A$ and $A+L$, respectively.

(iv) $L\ll N$, $A\ge 10N$.
In this case the region is close to an annulus.
In fact, 
\begin{alignat*}{4}
2a&=A,&\qquad 2b&=\sqrt{A^2-|k|^2}\ge \sqrt{A^2-(A/5)^2}\ge \tfrac{9}{10}\cdot 2a,\\
2a'&=A+L,&2b'&=\sqrt{(A+L)^2-|k|^2},
\end{alignat*}
with $2a,2a'$ (\mbox{resp.} $2b, 2b'$) the length of the long (\mbox{resp.} short) axes of inner and outer ellipses.
We first change the scale in the direction of short axis to make the inner ellipse a circle.
Then, the new region $\Sc{R}'$ is included in an annulus of width $\max \shugo{a'-a, \frac{a}{b}(b'-b)}$.
We see $a'-a=L$ and
\eqq{2\frac{a}{b}(b-b')&=\frac{a}{b}(\sqrt{(A+L)^2-|k|^2}-\sqrt{A^2-|k|^2})\\
&=\frac{a}{b}\frac{2AL+L^2}{\sqrt{(A+L)^2-|k|^2}+\sqrt{A^2-|k|^2}}\sim 1\cdot \frac{AL}{A}=L.}
Hence, the intersection of any ball of radius $2N$ and $\Sc{R}'$ is covered with $\lec NL$ unit squares, which shows that the intersection of any ball of radius $N$ and the original $\Sc{R}$ is also covered with the same number of unit squares.

(v) $L\ll N$, $|k|\gg 1$, and $|k|+10L\le A\le 10N$.
By translation and rotation, we may consider the covering of
\eqq{\ti{\Sc{R}}:=\Shugo{(x,y)\in \R^2}{\frac{x^2}{a'^2}+\frac{y^2}{b'^2}\le 1\le \frac{x^2}{a^2}+\frac{y^2}{b^2}}}
with $2a=A$, $2a'=A+L$, $2b=\sqrt{A^2-|k|^2}$, $2b'=\sqrt{(A+L)^2-|k|^2}$.
Note also that
\eqq{a\ge \tfrac{1}{2}|k|\gg 1,\qquad b=\tfrac{1}{2}\sqrt{A^2-|k|^2}\ge \tfrac{1}{2}\sqrt{(|k|+10L)^2-|k|^2}\ge \sqrt{|k|L}\gg 1.}
From Lemma~\ref{lem_ellipse}, we see that the smallest (axis-aligned) lattice polygon including the inside of outer boundary of $\ti{\Sc{R}}$ is included in the inside of an ellipse with long axis $2(a'+100\frac{a'}{b'})$ and short axis $2(b'+100)$.
In the same manner, the biggest (axis-aligned) lattice polygon included in the inside of inner boundary of $\ti{\Sc{R}}$ includes an ellipse with long axis $2(a-100\frac{a}{b})$ and short axis $2(b-100)$.
Therefore, the number of needed unit squares is estimated by
\eqq{&\Big( a'+100\frac{a'}{b'}\Big) (b'+100)-\Big( a-100\frac{a}{b}\Big) (b-100)\\
&=\Big( (a'-a)+100(\frac{a'}{b'}+\frac{a}{b})\Big) (b'+100)~+~\Big( a-100\frac{a}{b}\Big) (b'-b+200).}
We find $b'+100\lec N$, $|a-100\frac{a}{b}|\lec N$, $a'-a\lec L$, and
\eqq{\frac{a}{b}&=\frac{A}{\sqrt{A^2-|k|^2}}=\frac{1}{\sqrt{1-(\frac{|k|}{A})^2}}\le \frac{1}{\sqrt{1-(\frac{|k|}{|k|+10L})^2}}\le \frac{1}{\sqrt{1-(\frac{2N}{2N+10L})^2}}\\
&=\frac{2N+10L}{\sqrt{(2N+10L)^2-(2N)^2}}\sim \frac{N}{\sqrt{NL}}=\sqrt{\dfrac{N}{L}}.}
We also see $a'/b'\lec \sqrt{N/L}$ in the same manner.
Finally, 
\eqq{2(b'-b)&=\sqrt{(A+L)^2-|k|^2}-\sqrt{A^2-|k|^2}=\frac{2AL+L^2}{\sqrt{(A+L)^2-|k|^2}+\sqrt{A^2-|k|^2}}\\
&\lec \frac{A}{\sqrt{A^2-|k|^2}}L\lec \sqrt{\dfrac{N}{L}}L=\sqrt{NL}.}
With all of them together, we reach the bound $\lec N^{\frac{3}{2}}L^{\frac{1}{2}}$.
\end{proof}


\bigskip
\section*{Acknowledgments}

The author thanks Takamori Kato for reading an earlier version of the manuscript and giving a shorter proof.
This work was partially supported by Grant-in-Aid for JSPS Fellows 08J02196.


\bigskip
\bigskip

\end{document}